\newtheorem{theorem}{Theorem}[section]
\newtheorem{proposition}[theorem]{Proposition}
\newtheorem{lemma}[theorem]{Lemma}
\newtheorem{corollary}[theorem]{Corollary}
\theoremstyle{definition}
\newtheorem{definition}[theorem]{Definition}
\theoremstyle{remark}
\newtheorem{example}[theorem]{Example}
\newtheorem{remark}[theorem]{Remark}
\DeclareMathOperator{\Arg}{Arg}
\DeclareMathOperator{\Bin}{Bin}
\DeclareMathOperator{\ccc}{CC}
\DeclareMathOperator{\cord}{co-ord}
\DeclareMathOperator{\Conv}{Conv}
\DeclareMathOperator{\GL}{GL}
\DeclareMathOperator{\inter}{int}
\DeclareMathOperator{\Log}{Log}
\DeclareMathOperator{\ord}{ord}
\DeclareMathOperator{\supp}{supp}
\DeclareMathOperator{\Tri}{Tri}
\DeclareMathOperator{\ver}{vert}
\DeclareMathOperator{\Vol}{Vol}
\begin{document}

\title{%
On the order map for hypersurface coamoebas%
}                     

\author{%
  Jens Forsg{\aa}rd and
  Petter Johansson}

\dedicatory{In memory of Mikael Passare, who continues to inspire.}
  
\address{Department of Mathematics \\ Stockholm University \\
SE-106 91 Stockholm, Sweden.}
\email{jensf@math.su.se, petterj@math.su.se}

\begin{abstract}
Given a hypersurface coamoeba of a Laurent polynomial $f$, it is an open problem to describe the structure of the set
of connected components of its complement. In this paper we approach this problem by introducing the lopsided coamoeba. 
We show that the closed lopsided coamoeba comes naturally equipped with an order map, i.e.\ a map from the set of connected
components of its complement to a translated lattice inside the zonotope of a Gale dual of the point configuration $\supp(f)$.  
Under a natural assumption, this map is a bijection.
Finally we use this map to obtain new results concerning coamoebas of polynomials of small codimension.
\end{abstract}

\maketitle

\section{Introduction}
\label{sec:Intro}

The amoeba $\mathcal{A}(f)$ of a Laurent polynomial
\begin{equation}
\label{eqn:polynomial}
f(z) = \sum_{\alpha\in A} c_\alpha z^\alpha \in \mathbb{C}[z_1^{\pm 1}, \dots, z_n^{\pm 1}]
\end{equation}
is defined as the image of the zero locus $V(f)\subset(\mathbb{C}^*)^n$ under the componentwise logarithm mapping, 
i.e.\ $\mathcal{A}(f) = \Log(V(f))$ where
$\Log\colon (\mathbb{C}^*)^n\rightarrow \mathbb{R}^n$ is given by $z\mapsto (\log|z_1|, \dots, \log|z_n|)$.
An important step in the study of amoebas was taken in \cite{FPT} with the introduction of the so-called \emph{order map}. 
This is an injective map, here denoted by $\ord$, from the set of connected components of the complement of the amoeba $\mathcal{A}(f)$, to the set 
of integer points in the Newton polytope $\Delta_f = \Conv(A)$. If $E$ denotes a connected component of the amoeba complement
$\mathcal{A}(f)^c$, then the $j$th component of $\ord(E)$ is given by the integral
\[
 \ord(E)_j = \frac{1}{(2\pi i)^n} \int_{\Log^{-1}(x)} \frac{z_j\,f'_j(z)}{f(z)}\,\frac{d z_1 \cdots d z_n}{z_1\cdots z_n}, \qquad x\in E.
\]
Evaluating $\ord(E)$ in the univariate case amounts to counting zeros of $f$ by the argument principle, yielding an 
analogous interpretation of $\ord$ for multivariate polynomials. With this in mind, it is not hard to see that the 
vertex set $\ver(\Delta_f)$ is always contained in the image of $\ord$, and furthermore it was shown in \cite{R} that 
any subset of $\mathbb{Z}^n\cap \Delta_f$
that contain $\ver(\Delta_f)$ appears as the image of the order map for some polynomial with the given Newton polytope. 
Thus, even though the image of $\ord$ is non-trivial to determine, this map gives a good understanding of the structure 
of the set of connected components of the complement of the amoeba $\mathcal{A}(f)$. In particular, we have the sharp lower and 
upper bounds on the cardinality of this set given by $|\ver(\Delta_f)|$ and 
$|\mathbb{Z}^n\cap \Delta_f|$ respectively. See \cite{Mik} and \cite{PT} for an overview of amoeba theory.

The coamoeba $\mathcal{A}'(f)$ of $f$ is defined as the image of $V(f)$ under the componentwise argument mapping, i.e.\ 
$\mathcal{A}'(f) = \Arg(V(f))$ where $\Arg\colon (\mathbb{C}^*)^n \rightarrow \mathbf{T}^n$ is given by 
$\Arg(z) = (\arg(z_1), \dots, \arg(z_n))$. It is sometimes useful to consider the multivalued $\Arg$-mapping, 
which yields the coamoeba as a multiple periodic subset of 
$\mathbb{R}^n$. The starting point of this paper is the problem of describing the structure of the set of connected 
components of the complement of the closed coamoeba. 
The progress so far is restricted to that an upper bound 
on the cardinality of this set is given by the normalized volume $n!\Vol(\Delta_f)$, see \cite{N1}.
However, there is no known analogy of the order map for amoebas. 
Our approach to this problem
is to introduce the \emph{lopsided coamoeba}. As the name is choosen to emphasize the analogy with amoebas, let us briefly recall the notion of 
\emph{lopsided amoeba} as introduced in \cite{P}.

For a point $x\in \mathbb{R}^n$, consider the list of the moduli of the monomials of $f$ at $x$,
\[
 f\{x\} = \Big[ e^{\log|c_{\alpha_1}| + \langle\alpha_1, x\rangle}, \dots, e^{\log|c_{\alpha_N}| + \langle\alpha_N, x\rangle}\Big],
\]
where $N = |A|$. This list is said to be \emph{lopsided} if one component is greater than the sum of the others. 
If $f\{x\}$ is lopsided, then $x\notin \mathcal{A}(f)$.
 The lopsided amoeba $\mathcal{LA}(f)$ is defined as the set of points $x\in \mathbb{R}^n$ such that $f\{x\}$ is \emph{not} 
 lopsided. There is an inclusion $\mathcal{A}(f) \subset \mathcal{LA}(f)$, and in particular each connected component of 
 $\mathcal{LA}(f)^c$ is contained in a unique connected component of $\mathcal{A}(f)^c$. Let us consider the relation between the lopsided 
 amoeba and the order map. If the list $f\{x\}$ is dominated in the sense of lopsidedness by the monomial with 
 exponent $\alpha$, then it follows by Rouch\'e's theorem that $\ord(E) = \alpha$.
Hence, while $\ord$ can map connected components of the complement of $\mathcal{A}(f)$ to elements in the set 
$(\mathbb{Z}^n\cap \Delta_f)\setminus A$,  when restricted to the set of connected components of $\mathcal{LA}(f)^c$ it becomes an 
injective map into the point configuration $A$. In this sense, the structure of the set of connected components of the 
complement of the lopsided amoeba is better captured by $A$ than by its Newton polytope $\Delta_f$.

We always assume that a half-space $H\subset \mathbb{C}$ is open and contains the origin in its boundary, that is 
$H = H_\phi =  \{z\in \mathbb{C}\,\mid\,\Re(e^{i\phi}z)>0\}$ for some $\phi \in \mathbb{R}$. For each point $\theta\in \mathbf{T}^n$, 
consider the list
\[
 f\langle\theta\rangle = \Big[e^{i(\arg(c_{\alpha_1}) + \langle\alpha_1, \theta\rangle)}, \dots, e^{i(\arg(c_{\alpha_N}) + \langle\alpha_N, \theta\rangle)}\Big],
\]
which we by abuse of notation also view as a set $f\langle\theta\rangle\subset S^1\subset \mathbb{C}$. We say that the list 
$f\langle\theta\rangle$ is \emph{lopsided} 
if there exist a half-space $H\subset \mathbb{C}$ such that, as a set, $f\langle\theta\rangle\subset \overline{H}$ but 
$f\langle\theta\rangle\not\subset \partial H$.
\begin{definition}
\label{dfn:LopsidedComplement}
The lopsided coamoeba $\mathcal{LA}'(f)$ is the set of points $\theta\in\mathbf{T}^n$ such that $f\langle\theta\rangle$ is \emph{not} lopsided.
\end{definition}
When necessary we will consider $\mathcal{LA}'(f)$ as a subset of $\mathbb{R}^n$. 

The main result of this paper is that we provide an order map for lopsided coamoebas. That is, we provide a map from the set
of connected components of the complement of the closed lopsided coamoeba, to a translated lattice inside a certain zonotope,
related to a \emph{Gale dual} of $A$, see Theorem \ref{thm:MapToZonotope}. 

As noted above, the image of the order map of the (lopsided) amoeba depends in an intricate manner on the
coefficients of the polynomial $f$. The order map which we provide for the lopsided coamoeba will, under a natural assumption, be 
a bijection. That is, the dependency on the coefficients of $f$ lies only in the translation of the lattice, and this dependency
is explicitly given in Theorem \ref{thm:MapToZonotope}. As a consequence, we are able to use this map
to obtain new results concerning the geometry of coamoebas. In particular, we give an affirmative answer
to a special case of a conjecture by Passare, see Corollary \ref{cor:Passare}.

Let us give a brief outline of the paper. Section~\ref{sec:prel} contains fundamental results in coamoeba theory, most of 
which are previously known. In Section~\ref{sec:loop} we will turn to lopsided coamoebas, considering their fundamental
properties and their relation to ordinary coamoebas. 
In Section~\ref{sec:gale} we provide the order map for the lopsided coamoeba.
In the last section we consider coamoebas of polynomials of codimension one and two, using the results of the previous sections.

\subsection{Notation}
We will use $\ccc(S)$ to denote the set of connected components of the complement of a set $S$, in its natural ambient 
space. That is, $\ccc(\mathcal{A}(f))$ denotes the set of connected components of the complement of the amoeba, which always are 
subsets of $\mathbb{R}^n$, while $\ccc(\mathcal{A}'(f))$ denotes the set of connected components of the complement of the coamoeba viewed on 
the real $n$-torus $\mathbf{T}^n$.
The transpose of a matrix $M$ is denoted by $M^t$. By $g_M$ we denote the greatest common divisor of the maximal minors of 
$M$. We use $e_i$ for the $i$th vector of the standard basis in any vector space, and $\langle\cdot,\cdot\rangle$ for the standard 
scalar product. $I_m$ denotes the unit matrix of size $m\times m$. We use that convention that $X\subset Y$ includes the 
case $X = Y$.

\subsection{Acknowledgements}
Our greatest homage is paid to Mikael Passare, whose absence is still felt. To him we owe our knowledge and intuition 
concerning coamoebas. We would like to thank August Tsikh, whose comments greatly improved the manuscript. The first author 
is deeply grateful to Thorsten Theobald and Timo de Wolff for their hospitality in Frankfurt, and helpful suggestions on the 
manuscript. We would like to thank Ralf Fr\"oberg for his comments and suggestions, and Johannes Lundqvist for his interest 
and discussions. We would also like to thank the referee, whose suggestions and remarks led to substantial improvements.

\section{Preliminaries}
\label{sec:prel}

 As implicitly stated in the introduction, the coamoeba of a hypersurface is in general not closed. Let $\Gamma$ be a 
 (not necessarily proper) subface of $\Delta_f$. The truncated polynomial with respect to $\Gamma$ is defined as
\[
f_\Gamma(z) = \sum_{\alpha\in A\cap\Gamma} c_\alpha z^\alpha.
\]
 It was shown in \cite{J2} and \cite{NS1} that the closure of a coamoeba is the union of all the coamoebas of its truncated 
 polynomials, that is
\begin{equation}
\label{eqn:TruncatedUnion}
\overline{\mathcal{A}'}(f)= \bigcup_{\Gamma\subset \Delta_f}\mathcal{A}'(f_\Gamma).
\end{equation}
We will refer to $\mathcal{A}'(f_\Gamma)$ as the coamoeba of the face $\Gamma$. If the above union is taken only over the proper 
subfaces $\Gamma$ of $\Delta_f$ one obtains the \emph{phase limit set} ${\mathcal{P}^\infty}(f)$ (see \cite{NS1}), and similarly if the union 
is taken only over the edges of $\Delta_f$ one obtains the \emph{shell} $\mathcal{H}(f)$ of $\mathcal{A}'(f)$ (see \cite{J} and \cite{N1}). For the 
latter we note that the coamoeba of an edge $\Gamma\subset\Delta_f$ consists of a family of parallel hyperplanes, whose 
normal is in turn parallel to $\Gamma$. It is natural to focus on 
$\overline{\mathcal{A}'}(f)$ rather than $\mathcal{A}'(f)$, the main reason 
being that the components of the complement of $\overline{\mathcal{A}'}(f)$, when viewed in $\mathbb{R}^n$, are convex. 
To see this, we give 
the following argument due to Passare. If $\Theta\subset \mathbb{R}^n$ is a connected component of the complement of 
$\overline{\mathcal{A}'}(f)$, then the function  $g(w) = 1/f(e^{iw})$ is holomorphic on the tubular domain $\Theta+i\mathbb{R}^n$. As it 
cannot be extended to a holomorphic function on any larger 
tubular domain, the convexity follows from Bochner's tube theorem \cite{Boch}.

By abuse of notation one identifies the index set $A$ with the matrix
\begin{equation}
\label{eqn:A}
 A = \left(\begin{array}{cccc}
            1 & 1 & \cdots & 1\\
            \alpha_1 & \alpha_2 & \cdots & \alpha_N
           \end{array}\right).
\end{equation}
We will restrict the term \emph{integer affine transformation} of $A$ to refer to a matrix $T\in \GL_n(\mathbb{Q})$ such that 
\[
\left(\begin{array}{cc} 1 & 0 \\ 0 & T \end{array}\right) A \in \mathbb{Z}^{(n+1)\times N}.
\]
The transformation $T$ induces a function $\mathbb{C}^A\rightarrow \mathbb{C}^{TA}$ by the monomial change of variables
\[
 z_j\mapsto z^{T_j},
\]
where $T_j$ denotes the $j$th row of $T$. 
With the notation $e^{x+i\theta} = (e^{x_1+i\theta_1}, \dots, e^{x_n+i\theta_n})$ we find that
\[
T(f_j)\big(e^{(x+i\theta){T}^{-1}}\big) = \langle c_j, e^{(x+i\theta){T}^{-1}TA_j}\rangle= \langle c_j, e^{(x+i\theta) A_j}\rangle = f_j\big(e^{x+i\theta}\big).
\]
Thus, a point $\theta\in \mathbb{R}^n$ belongs to $\mathcal{A}'(f_j)$ if and only if $({T}^{-1})^t\theta$ belongs to $\mathcal{A}'(T(f_j))$. 
We conclude the following relation previously described in \cite{NS2}.

\begin{proposition}
 As subsets of\/ $\mathbb{R}^n$, we have that $\mathcal{A}'(T(f))$ is the image of $\mathcal{A}'(f)$ under the linear 
 transformation $({T}^{-1})^t$.
\end{proposition}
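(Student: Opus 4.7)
The plan is to read the proposition directly off of the functional identity
\[
T(f)\bigl(e^{(x+i\theta)T^{-1}}\bigr) = f\bigl(e^{x+i\theta}\bigr)
\]
established in the display immediately preceding the statement. This identity says that the linear automorphism $w\mapsto wT^{-1}$ of $\mathbb{C}^n$ — or, viewing vectors as columns, $w\mapsto (T^{-1})^t w$ — intertwines $f$ and $T(f)$ under the coordinatewise exponential covering $\mathbb{C}^n\to(\mathbb{C}^*)^n$. In particular it induces a bijection between the zero sets lifted to $\mathbb{C}^n$, namely $\log V(f)$ and $\log V(T(f))$.

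Since the automorphism $(T^{-1})^t$ acts real-linearly on $\mathbb{C}^n=\mathbb{R}^n\oplus i\mathbb{R}^n$ preserving the decomposition into real and imaginary parts, the image of a lifted zero with imaginary part $\theta$ is a lifted zero with imaginary part $(T^{-1})^t\theta$. Applying $\Arg$ to both sides of the identity therefore gives the equivalence
\[
\theta\in\Arg\bigl(V(f)\bigr)\iff (T^{-1})^t\theta\in\Arg\bigl(V(T(f))\bigr),
\]
which is precisely the assertion $\mathcal{A}'(T(f))=(T^{-1})^t\mathcal{A}'(f)$.

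The only real point of care is bookkeeping: one has to keep track of row versus column vector conventions (this is what produces the transpose), and one must insist that the coamoebas be regarded as subsets of $\mathbb{R}^n$ rather than of $\mathbf{T}^n$, as stated in the proposition. The latter is essential because $T\in\GL_n(\mathbb{Q})$ need not preserve the integer lattice, so $(T^{-1})^t$ need not descend to a self-map of the torus. I do not foresee any substantive obstacle; the proof amounts to unwinding the change of variables that has already been carried out.
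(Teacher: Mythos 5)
Your argument is correct and coincides with the paper's own proof: the paper likewise derives the proposition directly from the displayed identity $T(f)\bigl(e^{(x+i\theta)T^{-1}}\bigr)=f\bigl(e^{x+i\theta}\bigr)$, concluding that $\theta\in\mathcal{A}'(f)$ if and only if $(T^{-1})^t\theta\in\mathcal{A}'(T(f))$ as subsets of $\mathbb{R}^n$. Your remarks on the row/column bookkeeping and on working in $\mathbb{R}^n$ rather than $\mathbf{T}^n$ (since $T\in\GL_n(\mathbb{Q})$ need not act on the torus) are exactly the points the paper handles via the multivalued $\Arg$ map and the subsequent corollary.
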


\begin{corollary}
\label{cor:CoAmoebaTransformation}
As subsets of\/ $\mathbf{T}^n$, the coamoeba $\mathcal{A}'(T(f))$ consists of $|\det(T)|$ linearly transformed copies of $\mathcal{A}'(f)$.
\end{corollary}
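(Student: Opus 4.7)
The plan is to combine the identity $\mathcal{A}'(T(f)) = (T^{-1})^t\mathcal{A}'(f)$ from the preceding proposition with a short lattice-index calculation on the torus. Write $\Gamma = (2\pi\mathbb{Z})^n$ and $\Lambda = (T^{-1})^t\Gamma$. First I would observe that $\mathcal{A}'(f)\subset\mathbb{R}^n$ is $\Gamma$-periodic by construction (it is the preimage of the coamoeba on $\mathbf{T}^n$ under the quotient $\mathbb{R}^n\to\mathbf{T}^n$), so its image $(T^{-1})^t\mathcal{A}'(f)$ is $\Lambda$-periodic. On the other hand, because the matrix $T$ is an integer affine transformation, $T(f)$ is a genuine Laurent polynomial with integer exponents, so $\mathcal{A}'(T(f))$ is also $\Gamma$-periodic. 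In particular both lattices leave the same set invariant, which forces $\Gamma\subset\Lambda$.

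Next I would compute the index $[\Lambda:\Gamma]$ from the covolume ratio:
\[
 [\Lambda:\Gamma] \;=\; \frac{\mathrm{vol}(\mathbb{R}^n/\Gamma)}{\mathrm{vol}(\mathbb{R}^n/\Lambda)} \;=\; \frac{(2\pi)^n}{(2\pi)^n\,|\det(T^{-1})^t|} \;=\; |\det T|.
\]
A fundamental domain $F$ of $\Gamma$ (that is, a chart for $\mathbf{T}^n$) can therefore be partitioned, modulo boundary identifications, into $|\det T|$ translates of a fundamental domain $D$ of $\Lambda$ by coset representatives of $\Lambda/\Gamma$.

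Finally I would use the $\Lambda$-periodicity of $(T^{-1})^t\mathcal{A}'(f)$ to read off the structure on $\mathbf{T}^n$: on each of the $|\det T|$ translates of $D$ inside $F$, the set $\mathcal{A}'(T(f))$ restricts, up to a translation by an element of $\Lambda/\Gamma$, to $(T^{-1})^t(\mathcal{A}'(f)\cap F_0)$ for a fixed fundamental domain $F_0$ of $\Gamma$, and this piece is a single linearly transformed copy of $\mathcal{A}'(f)\subset\mathbf{T}^n$. Packaging the $|\det T|$ pieces together yields the claim.

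The only real obstacle is making the phrase \emph{linearly transformed copies} precise: one needs to verify that the $|\det T|$ pieces obtained from different cosets of $\Lambda/\Gamma$ are indeed congruent to $\mathcal{A}'(f)$ under the same linear map $(T^{-1})^t$, differing only by translations inside $\mathbf{T}^n$. This follows from the $\Gamma$-periodicity of $\mathcal{A}'(f)$ in $\mathbb{R}^n$ together with the decomposition of $F$ above, and is essentially a bookkeeping step once the covolume computation is in hand.
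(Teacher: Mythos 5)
Your strategy coincides with the paper's: the preceding proposition gives $\mathcal{A}'(T(f))=(T^{-1})^t\mathcal{A}'(f)$ as subsets of $\mathbb{R}^n$, and the corollary is then read off from the fact that $(T^{-1})^t$ scales volumes by $1/|\det T|$ by looking at a fundamental domain; your lattice bookkeeping is essentially the expanded form of the paper's one-line proof.

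However, one step does not stand as written: from the fact that $(T^{-1})^t\mathcal{A}'(f)$ is invariant under both $\Gamma=(2\pi\mathbb{Z})^n$ and $\Lambda=(T^{-1})^t\Gamma$ you conclude that this ``forces $\Gamma\subset\Lambda$''. Invariance of a set under two lattices only yields invariance under the group they generate; it can never force one lattice to contain the other (any set invariant under all translations is invariant under both, for instance). The containment $\Gamma\subset\Lambda$ is equivalent to $T^t\mathbb{Z}^n\subset\mathbb{Z}^n$, i.e.\ to $T$ having integer entries, and under the paper's definition of an integer affine transformation ($T\in\GL_n(\mathbb{Q})$ with the transformed exponent matrix integral) this can genuinely fail: for $f(z)=c_0+c_1z^2$ and $T=1/2$ one has $\Lambda=4\pi\mathbb{Z}\subsetneq\Gamma$ and $|\det T|=1/2$, so the index computation $[\Lambda:\Gamma]=|\det T|$ is meaningless there. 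Of course, the corollary's count of copies is only literally sensible when $|\det T|$ is a positive integer, which is exactly the situation in which the paper applies it (e.g.\ when $A$ contains the vertices of the standard simplex, forcing $T\in\mathbb{Z}^{n\times n}$). So the repair is simply to assume, or observe in the relevant applications, that $T$ preserves $\mathbb{Z}^n$, and to deduce $\Gamma\subset\Lambda$ from that rather than from the periodicity of the coamoeba; with that substitution your covolume argument and the decomposition of the fundamental domain into $|\det T|$ translates go through and reproduce the paper's proof.
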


\begin{proof}
The transformation $(T^{-1})^t$ acts with a scaling factor $1/|\det(T)|$ on $\mathbb{R}^n$, now consider a fundamental domain.
\end{proof}

Any point configuration $A$ can be shrunk, by means of an integer affine transformation, to a point configuration whose 
maximal minors are relatively prime \cite{GKZ1}. 

The polynomial $f$, and the point configuration $A$, is called \emph{maximally sparse} if $A = \ver(\Delta_f)$. If in 
addition $\Delta_f$ is a simplex, then $V(f)$ is known as a \emph{simple hypersurface}, and we will say that $f$ a 
\emph{simple polynomial}.
Let us describe the coamoeba of a simple hypersurface. Consider first when $\Delta_f$ is the standard 2-simplex. After a 
dilation of the variables, which corresponds to a translation of the coamoeba, we can assume that 
$f(z_1, z_2) = 1 + z_1 + z_2$. If the coamoebas of the truncated polynomials of the edges of $\Delta_f$ are drawn, with 
orientations given by the outward normal vectors of $\Delta_f$, then $\mathcal{A}'(f)$ consist of the interiors of the oriented 
regions, 
together with all intersection points.
\begin{figure}[h]
\centering
\includegraphics[width=40mm]{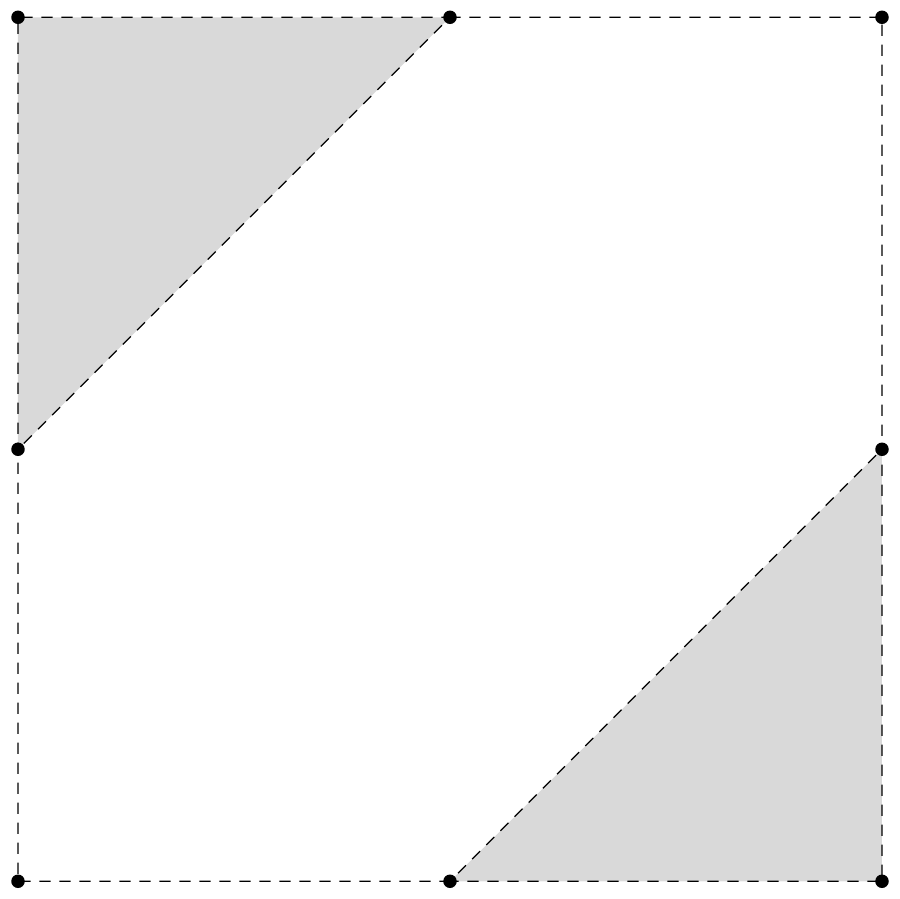}
\caption{The coamoeba of $f(z_1, z_2) = 1 +z_1+z_2$ in the domain $[-\pi,\pi]^2$}
\label{fig:Trinomial}
\end{figure}
An arbitrary simple trinomial differs from the standard 2-simplex only by an integer affine transformation, hence the 
coamoeba of any simple trinomial consists of a certain number of copies of $\mathcal{A}'(f)$, and is given by the same recipe as for 
the standard 2-simplex. 

Consider now when $\Delta_f$ is the standard $n$-simplex, that is $f(z) = 1 + z_1 + \dots + z_n$. Let $\Tri(f)$ denote the 
set of all trinomials one can construct from the set of monomials of $f$, which we still consider as polynomials in the 
$n$ variables $z_1, \dots, z_n$. It was shown in \cite{J} that we have the identity
\begin{equation}
\label{eqn:TrinomialUnion}
 \overline{\mathcal{A}'}(f) = \bigcup_{g\in \Tri(f)} \overline{\mathcal{A}'}(g),
\end{equation}
which also holds without taking closures if $n\neq 3$. Again, an arbitrary simple polynomial is only an integer affine 
transformation away, and hence the 
identity (\ref{eqn:TrinomialUnion}) holds for all simple hypersurfaces.

The complement of the closed coamoeba of $f(z) = 1 + z_1 + \dots + z_n$, in the fundamental domain $[-\pi,\pi)^n$ in $\mathbb{R}^n$, 
consists the convex hull of the open cubes $(0,\pi)^n$ and $(\pi, 0)^n$. In particular $\overline{\mathcal{A}'}(f)^c$ has exactly 
one connected component in $\mathbf{T}^n$.
Thus, the number of connected components of $\overline{\mathcal{A}'}(f)^c$ equals the normalized volume $n!\Vol(\Delta_f)=1$ in this 
case. For each integer affine transformation $T$ we have that $\Vol(\Delta_{T(f)}) = |\det(T)|\Vol(\Delta_f)$. It follows 
that for any simple hypersurface, the number of connected components of the complement of its coamoeba will be equal to the 
normalized volume of its Newton polytope.

Let us end this section with a fundamental property of the shell $\mathcal{H}(f)$, which we have not seen a proof of elsewhere.

\begin{lemma}
\label{lem:EdgeSubdivision}
Let $l\subset \mathbb{R}^n$ be a line segment with endpoints in $\overline{\mathcal{A}'}(f)^c$ that intersect 
$\overline{\mathcal{A}'}(f)$. Then $l$ 
intersect $\mathcal{A}'(f_{\Gamma})$ for some edge\/ $\Gamma\subset \Delta_f$. In particular, each cell of the hyperplane 
arrangement $\mathcal{H}(f)$ contains at most one connected component of $\overline{\mathcal{A}'}(f)^c$.
\end{lemma}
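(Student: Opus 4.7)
The plan is to first reduce the ``in particular'' clause to the main statement by a short convexity argument: if two connected components $\Theta_1,\Theta_2$ of $\overline{\mathcal{A}'}(f)^c$ both met a single cell $C$ of the hyperplane arrangement $\mathcal{H}(f)$, then choosing $\theta_i\in \Theta_i\cap C$ would give a segment $[\theta_1,\theta_2]\subset C$ (by convexity of $C$) that avoids $\mathcal{H}(f)$ and crosses $\overline{\mathcal{A}'}(f)$, contradicting the main statement. So I focus on the main claim.

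For the main claim I would proceed by induction on $|A|$. The base $|A|=2$ is immediate, since $f$ is then a binomial and $\overline{\mathcal{A}'}(f)=\mathcal{A}'(f)=\mathcal{H}(f)$. For the inductive step, pick $\theta_0 = l(t_*)$, the first point at which $l$ meets $\overline{\mathcal{A}'}(f)$. By the truncated union formula (\ref{eqn:TruncatedUnion}) there is a face $\Gamma\subset\Delta_f$ with $\theta_0\in\mathcal{A}'(f_\Gamma)$; vertex faces give empty coamoebas so $\dim\Gamma\ge 1$, and $\dim\Gamma=1$ is already the desired conclusion. If $\Gamma$ is a proper face of dimension at least two, then $|A\cap\Gamma|<|A|$, the endpoints of $l$ remain in $\overline{\mathcal{A}'}(f_\Gamma)^c$ (because $\overline{\mathcal{A}'}(f_\Gamma)\subset\overline{\mathcal{A}'}(f)$), and the inductive hypothesis applied to $f_\Gamma$ yields an edge $\Gamma'\subset\Gamma$ with $l\cap\mathcal{A}'(f_{\Gamma'})\ne\emptyset$; since edges of a face of $\Delta_f$ are edges of $\Delta_f$, this completes this case.

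The delicate case remaining, which I expect to be the main obstacle, is when every face $\Gamma$ satisfying $\theta_0\in\mathcal{A}'(f_\Gamma)$ equals $\Delta_f$, so $\theta_0\in\mathcal{A}'(f)\setminus\mathcal{P}^\infty(f)$ while $\theta_0\in\partial\overline{\mathcal{A}'}(f)$. Here my plan is to argue by properness: because $\theta_0\notin\mathcal{P}^\infty(f)$ and $\mathcal{P}^\infty(f)$ is closed, the fiber $\Arg^{-1}(\theta_0)\cap V(f)$ is a compact subset of $(\mathbb{C}^*)^n$, and an implicit-function or local-degree argument applied to the holomorphic map $w\mapsto f(e^{iw})$ should show that every $\theta$ sufficiently near $\theta_0$ still admits a preimage in $V(f)$, placing a neighborhood of $\theta_0$ in $\mathcal{A}'(f)\cup\mathcal{P}^\infty(f)=\overline{\mathcal{A}'}(f)$ and contradicting that $\theta_0$ is a first entry point. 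The hard part will be handling critical preimages where $\Arg|_{V(f)}$ has non-maximal rank; I would address this by summing a local degree over the compact fiber and by invoking Bochner's convexity for the complement components of $\overline{\mathcal{A}'}(f)^c$ near $\theta_0$ to rule out cancellations.
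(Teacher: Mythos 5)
Your reduction of the ``in particular'' clause and the inductive framework via the truncated union formula \eqref{eqn:TruncatedUnion} are fine, and they run parallel to part of the paper's own induction (the paper inducts on $\dim\Delta_f$ rather than $|A|$, but that difference is harmless). The problem is the case you yourself flag as delicate: your plan there is to show it cannot occur, i.e.\ that a point $\theta_0\in\mathcal{A}'(f)\setminus\mathcal{P}^\infty(f)$ cannot lie on $\partial\overline{\mathcal{A}'}(f)$. That claim is false. The boundary of a coamoeba is in general \emph{not} contained in the phase limit set; it also contains critical values of the map $\Arg|_{V(f)}$ (fold-type points where the logarithmic Gauss map is real). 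A concrete instance is the paper's own example $f=1+z_1+z_2+iz_1z_2$ (Figure \ref{fig:LopsidedExample}): here $\mathcal{P}^\infty(f)$ is a union of four families of lines, while $\partial\overline{\mathcal{A}'}(f)$ contains curved arcs, whose points lie in $\mathcal{A}'(f)\setminus\mathcal{P}^\infty(f)$. A segment whose first entry into $\overline{\mathcal{A}'}(f)$ is through such an arc realizes exactly your delicate case, so no contradiction can be derived from it. Correspondingly, the analytic mechanism you propose does not work: compactness of the fiber $\Arg^{-1}(\theta_0)\cap V(f)$ (which is correct, since $\mathcal{P}^\infty(f)$ is closed) does not give local surjectivity of $\Arg|_{V(f)}$ near $\theta_0$. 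At a boundary point every point of the fiber is a critical point whose local image lies on one side, the local degree (when it is even defined; for $n\geq3$ the fibers are positive-dimensional) can vanish, and neither summing degrees over the fiber nor Bochner convexity of the complement components rules this out --- it is precisely what happens at a fold.

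The actual content of the lemma is that even when the first entry point is such an ``interior-type'' boundary point, the segment must still meet an edge coamoeba \emph{somewhere else} along its length, and this requires a global argument that your proposal has no substitute for. The paper supplies it in Part 2: if $l$ crosses $\mathcal{A}'(f)$ with endpoints outside $\overline{\mathcal{A}'}(f)$, then an argument-principle count along a contour in one variable (valid since one may reduce to $n\geq2$) shows that $V(f)$ contains points with arguments arbitrarily close to $l$ and with unbounded $\Log$-image; Part 1 then converts such escaping sequences into a point of $l$ lying in $\overline{\mathcal{A}'}(f_\Gamma)$ for a \emph{proper} face $\Gamma$, after which induction on the dimension of the face (as in your proper-face case) finishes. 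To repair your proof you would need to replace the delicate-case contradiction by an argument of this global type; as written, the step ``a neighborhood of $\theta_0$ lies in $\overline{\mathcal{A}'}(f)$, contradicting first entry'' is simply not available.
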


\begin{proof}
We have divided this rather technical proof into three parts.

\underline{Part 1:} Let us first present a slight modification of an argument given in \cite[Lemma 2.10]{J}, when proving the 
inclusion $\overline{\mathcal{A}'}(f) \subset \bigcup_{\Gamma\subset \Delta_f} \mathcal{A}'(f_\Gamma)$. Assume that 
$\Delta_f$ has full dimension and that the sequence
$\{z(j)\}_{j=1}^\infty\subset V(f)$ is such that
\[
\lim_{j\rightarrow \infty}z(j) \notin (\mathbb{C}^*)^n \quad \text{and} \quad  \lim_{j\rightarrow \infty} \Arg(z(j)) = \theta \in \mathbf{T}^n.
\]
We claim that $\theta\in \mathcal{A}'(f_\Gamma)$ for some strict subface $\Gamma \subset \Delta_f$. 
As $V(f)$ is invariant under multiplication of $f$ with a Laurent monomial, we can assume that the constant $1$ is a monomial of $f$.
We can also choose a subsequence of $\{z(j)\}_{j=1}^\infty$ such that, after possibly reordering $A$, 
\[
|z(j)^{\alpha_1}| \geq \dots \geq |z(j)^{\alpha_N}|, \quad j = 1, 2, \dots
\]
and in addition
\[
\lim_{j\rightarrow \infty} \frac{|z(j)|^{\alpha_k}}{|z(j)|^{\alpha_1}} \rightarrow d_k
\]
for some $d_k \in [0,1]$. It is shown in the proof of \cite[Lemma 2.10]{J} that $\Gamma = \{\alpha_k\,\mid\, d_k > 0\}$ is a face of 
$\Delta_f$, and furthermore that $\theta \in \mathcal{A}'(f_\Gamma)$. 
With the above ordering of $A$, assume that the constant $1$ is the $p$th monomial. We need to show that $\Gamma$ is a strict subface of $\Delta_f$. Assuming the contrary, we find that $d_k> 0$ for each $k$, and hence
\[
\lim_{j\rightarrow \infty} |z(j)^{\alpha_k}| = \lim_{j\rightarrow \infty} \frac{|z(j)|^{\alpha_k}}{|z(j)|^{\alpha_1}}\,|z(j)|^{\alpha_1} = \frac{d_k}{d_p},
\]
which in particular is finite and nonzero. As $\Delta_f$ has full dimension, this implies that that 
$\lim_{j \rightarrow \infty} |z(j)_m|$ is 
finite and non-zero for each $m = 1, \dots, n$. As $\arg(z(j)) \rightarrow \theta$ when $j\rightarrow \infty$, we find that 
$\lim_{j\rightarrow \infty} z(j) \in (\mathbb{C}^*)^n$, 
which contradicts our initial assumptions. Hence, $d_N = 0$, and $\Gamma$ is a strict subface of $\Delta_f$.

\underline{Part 2:} We now claim that if $n\geq 2$, then the set
\[
 P = \{z\in V(f) \,|\,\Arg(z) \in N(l)\cap\mathcal{A}'(f)\},
\]
where $N(l)$ is an arbitrarily small neighbourhood of $l$ in $\mathbb{R}^n$, is such that $\Log(P)$ is unbounded.
To see this, consider the function $g(w) = f(e^w)$, where $w_k = x_k + i \theta_k$. Notice that the $w$-space $\mathbb{C}^n$ 
is identified with the image of the $z$-space $(\mathbb{C}^*)^n$ under the multivalued, \emph{complex} logarithm. That is, 
the coamoeba $\mathcal{A}'(f)$ and the line $l$ are considered as subsets of $\mathbb{R}^n$, which is the image of the $w$-space $\mathbb{C}^n$
under taking coordinatewise imaginary parts.

We can assume
that $l$ is parallel to the $\theta_1$-axis and, by a translation of the coamoeba, that there are $\rho_1, \dots, \rho_n >0$  such that the set
\[
 S = [-\rho_1, \rho_1]\times \cdots \times [-\rho_n, \rho_n]
\]
fulfils $l\subset S \subset N(l)$. Furthermore we can choose $0<r<\rho_1$ such that, with
\[
 \tilde S = [-r, r] \times [-\rho_2, \rho_2]\times \cdots \times [-\rho_n, \rho_n],
\]
the set $S\setminus \tilde S$ consist of two $n$-cells that are neighbourhoods of the endpoints of $l$. Hence, we can assume that 
$S\setminus \tilde S\subset \overline{\mathcal{A}'}(f)^c$. If we assume that $\Log(P)$ is bounded, then there exists a sufficiently 
large $R\in \mathbb{R}$ such that if
\[
 D = \{x \in \mathbb{R}^n\,\mid\, |x|>R\},
\]
then $g(w)$ has no zeros in $D+i S \subset \mathbb{C}^n$. Let $w'$ denote the vector $(w_2, \dots, w_n)$, and let $(D+iS)'$ be the projection of 
$D+iS$ onto the last $n-1$ components.
Then in particular, $g(w)$ has no zeros when $w'\in (D+iS)'$ and $w_1$ lies in the domain given by 
$\{w_1 \,|\, r<|\Im(w_1)|<\rho_1\}\cup(\{w_1\,|\,|\Re(w_1)|>R\}\cap\{w_1\,|\,|\Im(w_1)|<\rho_1\})$, see Figure \ref{fig:IntegrationDomain}. Consider a 
curve $\gamma$ as in the figure, and the integral
\[
 k(w') = \frac{1}{2\pi i}\int_{\gamma} \frac{g'_1(w_1, w')}{g(w_1,w')} d w_1, \quad w'\in (D+iS)'
\]
By the argument principle, for a fix $w'$ this counts the number of roots of $g(w)$ inside the box in Figure \ref{fig:IntegrationDomain}. As it depend 
continuously on $w'$ in the domain $(D+iS)'$ it is constant, and by considering $w'$ with $|x'|>R$ (here it is essential 
that $n \geq 2$) we conclude that it is zero. However, this contradicts the assumption that $l$ intersects $\mathcal{A}'(f)$. 
Hence, $\Log(P)$ is unbounded.
\begin{figure}[h]
\centering
\includegraphics[width=60mm]{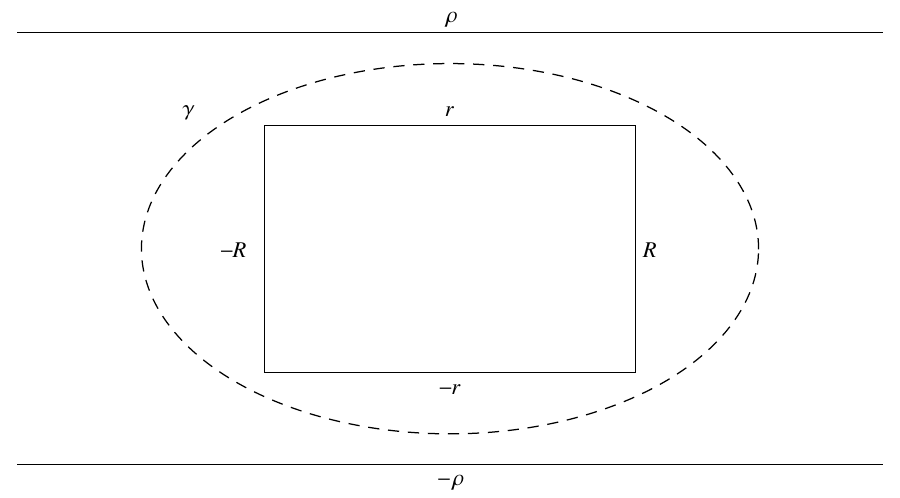}
\caption{The curve $\gamma \subset \mathbb{C}$.}
\label{fig:IntegrationDomain}
\end{figure}

\underline{Part 3:} We will now prove the lemma using induction on the dimension $d$ of $\Delta_f$. If $d=1$, then there is 
nothing to prove. Consider the case of a fix $d > 1$, assuming that the statement is proven for each smaller dimension. Notice that 
$f$ has $n-d$ homogeneities, and hence it is essentially a polynomial in $d$ variables. Dehomogenizing $f$  corresponds to 
projecting $\mathbf{T}^n$ onto $\mathbf{T}^d$ such that the coamoeba $\mathcal{A}'(f) \subset \mathbf{T}^n$ consist precisely of the fibers over the 
coamoeba of the dehomogenized polynomial. The image of $l$ under this projection will intersect the coamoeba of an edge of 
$\Delta_f$ in $\mathbf{T}^d$ if and only if $l$ intersect the coamoeba of an edge of $\Delta_f$ in $\mathbf{T}^n$. Hence, it is enough to 
prove the statement under the assumption that $d=n$. In particular, $n \geq 2$.

Choose a decreasing sequence $\{\varepsilon(k)\}_{k=1}^\infty$ of positive real numbers, such that 
$\lim_{k\rightarrow \infty} \varepsilon(k) = 0$,
and consider the family of neighbourhoods of $l$ given by
\[
N(l,k) = \Big\{\theta \in \mathbb{R}^n \,\Big\vert\, \inf_{x\in l} |\theta- x| < \varepsilon(k)\Big\},
\]
where $|\cdot|$ denotes the Euclidean norm on $\mathbb{R}^n$. Define
\[
P(k) =  \{z\in V(f) \,|\,\Arg(z) \in N(l,k)\cap\mathcal{A}'(f)\}.
\]
As $n \geq 2$, Part 2 shows that for each $k$, the set $\Log(P(k))$ is unbounded. That is, for each $k$, we can find a
sequence $\{z(k,m)\}_{m=1}^\infty$ such that $z(k,m) \in V(f)$, with 
\[
\Arg(z(k,m)) \in N(l,k)\cap \mathcal{A}'(f) \subset \overline{N(l,k) \cap \mathcal{A}'(f)},
\] 
however $\lim_{m\rightarrow \infty} z(k,m) \notin (\mathbb{C}^*)^n$. Since $\overline{N(l,k) \cap \mathcal{A}'(f)}$ is compact, we can 
choose a subsequence such
that $\Arg(z(k,m))$ converges to some $\theta(k) \in  \overline{N(l,k) \cap \mathcal{A}'(f)}$ when $m\rightarrow \infty$.
Then, Part 1 gives a strict subface $\Gamma(k)$ of $\Delta_f$ such that $\theta(k) \in \mathcal{A}'(f_{\Gamma(k)})$.
Since $\Delta_f$ has only finitely many strict subfaces, we can choose a subsequence of $\{\theta(k)\}_{k=1}^\infty$ such that $\Gamma = \Gamma(k)$
does not depend on $k$. As $\{\theta(k)\}_{k=1}^\infty \subset \overline{N(l,1)}$, which is compact, we can also choose
this subsequence such that $\theta(k)$ converges to some $\theta \in \overline{N(l,1)}$ when $k\rightarrow \infty$.
On the one hand, we have that $\theta \in l$ by construction of the sets $N(l,k)$. On the other hand, that 
$\theta(k) \in \mathcal{A}'(f_{\Gamma})$ implies that $\theta \in \overline{\mathcal{A}'}(f_\Gamma)$.
In particular, $l$ and $\overline{\mathcal{A}'}(f_\Gamma)$ intersect at $\theta$. 

The identity 
\eqref{eqn:TruncatedUnion} shows that the endpoints of $l$ is contained in the complement of $\overline{\mathcal{A}'}(f_\Gamma)$. As 
the dimension of $\Gamma$ is strictly less than the dimension of $\Delta_f$, the induction hypothesis shows that $l$ 
intersect the coamoeba of an edge of $\Gamma$. As each edge of $\Gamma$ is an edge of $\Delta_f$, the lemma is proven.
\end{proof}

\section{Lopsided coamoebas}
\label{sec:loop}

In this section we will investigate the basic properties of (closed) lopsided coamoebas. The formulation of Definition 
\ref{dfn:LopsidedComplement}
 was partly chosen to stress the analogy with the lopsided amoeba. A more natural description is perhaps the following; 
 denote the components of $f\langle\theta\rangle$ by $t_1, \dots, t_N$, and consider the convex cone
\[
\mathbb{R}_+f\langle\theta\rangle = \big\{r_1t_1 +\dots + r_Nt_N \,\,|\,\, r_1, \dots, r_N\in \mathbb{R}_+\big\}.
\]

\begin{lemma}
\label{lem:LopsidedCone}
 We have that $\theta\in \mathcal{LA}'(f)$ if and only if\/ $0\in \mathbb{R}_+f\langle\theta\rangle$.
\end{lemma}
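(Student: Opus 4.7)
The plan is to recognize the lemma as a Gordan--Stiemke style ``theorem of the alternative'' for the unit vectors $t_1,\dots,t_N \in S^1 \subset \mathbb{C}\cong\mathbb{R}^2$. Lopsidedness of $f\langle\theta\rangle$ translates to the existence of $\phi\in\mathbb{R}$ with $\Re(e^{i\phi}t_k)\geq 0$ for every $k$ and strict inequality for at least one $k$, while $0\in\mathbb{R}_+ f\langle\theta\rangle$ means that $\sum_k r_k t_k = 0$ for some strictly positive coefficients $r_k>0$. I will verify the two implications separately.

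The direction $0\in\mathbb{R}_+ f\langle\theta\rangle \Rightarrow \theta\in\mathcal{LA}'(f)$ is straightforward. If $f\langle\theta\rangle$ were lopsided via some $H_\phi$, then applying the real linear functional $\Re(e^{i\phi}\cdot)$ to $\sum_k r_k t_k = 0$ would yield
\[
 0 \;=\; \sum_{k=1}^N r_k\,\Re(e^{i\phi}t_k),
\]
a sum of non-negative reals with every $r_k > 0$, forcing $\Re(e^{i\phi}t_k) = 0$ for each $k$. This contradicts $f\langle\theta\rangle\not\subset\partial H_\phi$.

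For the converse, I plan to analyze $K=\Conv(f\langle\theta\rangle)\subset\mathbb{R}^2$ by cases on the location of the origin. If $0\notin K$, Hahn--Banach separation yields $\phi$ with $\Re(e^{i\phi}z)>0$ for every $z\in K$, contradicting $\theta\in\mathcal{LA}'(f)$. If $\dim K\leq 1$ and $0\in K$, the $t_k$'s lie on a common line through the origin, so $t_k\in\{w,-w\}$ for some unit $w$, with both values necessarily present (else we would be in the previous case); choosing $r_k=|J_\mp|$ for $k\in J_\pm=\{k : t_k=\pm w\}$ gives a strictly positive relation $\sum r_k t_k=0$. If $\dim K=2$, a supporting-line argument at any boundary point of $K$ excludes $0\in\partial K$ under $\theta\in\mathcal{LA}'(f)$, so $0\in\inter K$.

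The main obstacle is this final two-dimensional interior case: I need a representation of $0$ as a strictly positive combination of \emph{all} the $t_k$'s, not merely of a subset. My plan is a standard averaging trick. Since $0\in\inter K$, for every $k$ and every sufficiently small $\epsilon>0$ the point $-\epsilon t_k$ still lies in $K$, yielding a convex representation $-\epsilon t_k=\sum_j \mu_{kj}t_j$, equivalently $0=\epsilon t_k+\sum_j \mu_{kj}t_j$. Averaging these $N$ identities over $k$ produces $0=\sum_j c_j t_j$ with $c_j\geq \epsilon/N>0$ for every $j$, completing the proof.
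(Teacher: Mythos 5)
Your proof is correct, and it takes a somewhat different route from the paper's. The forward implication is essentially the same in both: lopsidedness plus strict positivity of the $r_k$ forces every $\Re(e^{i\phi}t_k)$ to vanish, i.e.\ the cone sits in $\inter(H)$ and cannot contain $0$. For the converse, however, the paper stays entirely at the level of the cone $\mathbb{R}_+f\langle\theta\rangle$: if $0$ is not in it, convexity is invoked to place the whole cone strictly inside an open half-space, whence lopsidedness --- the positive combination is never exhibited. You instead pass to the compact polytope $K=\Conv(f\langle\theta\rangle)$ and case-analyze the position of the origin: strict separation when $0\notin K$, the supporting-line argument when $\dim K=2$ and $0\in\partial K$, an explicit balanced combination over $J_\pm$ in the collinear case, and the averaging identity $0=\sum_j(\epsilon+\sum_k\mu_{kj})t_j$ when $0\in\inter K$. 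What your route buys is that the two delicate points the paper's one-line converse glosses over --- that the separation of the (non-closed) cone from the origin can be made strict, and that the vanishing combination can be taken with strictly positive coefficients on \emph{every} monomial rather than on a subset --- are verified explicitly, using only strict separation from a compact convex set and the supporting-line theorem; the paper's argument is shorter but leaves exactly these verifications to the reader.
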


\begin{proof}
 If $\theta\in \mathcal{LA}'(f)^c$, then $\mathbb{R}_+f\langle\theta\rangle\subset \inter(H)$, where $H\subset \mathbb{C}$ is the half-space such that 
 $f\langle\theta\rangle\subset H$ but $f\langle\theta\rangle\not\subset \partial H$. Conversely, if 
 $\mathbb{R}_+f\langle\theta\rangle$ does not contain the origin, then it follows from the convexity of $\mathbb{R}_+f\langle\theta\rangle$ that there exist a 
 half-space $H$ such that 
 $\mathbb{R}_+f\langle\theta\rangle\subset \inter(H)$.
\end{proof}

\begin{corollary}
\label{cor:Inclusion}
 We have the inclusion $\mathcal{A}'(f)\subset \mathcal{LA}'(f)$.
\end{corollary}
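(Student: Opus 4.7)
The plan is to invoke Lemma \ref{lem:LopsidedCone} directly. Given $\theta \in \mathcal{A}'(f)$, I would pick a point $z \in V(f)$ with $\Arg(z) = \theta$, and use the vanishing $f(z) = 0$ to exhibit $0$ as an element of the convex cone $\mathbb{R}_+ f\langle\theta\rangle$.

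Concretely, each monomial decomposes as
\[
c_{\alpha_j} z^{\alpha_j} = |c_{\alpha_j}|\, |z|^{\alpha_j}\, e^{i(\arg(c_{\alpha_j}) + \langle \alpha_j, \theta\rangle)} = r_j t_j,
\]
where $r_j = |c_{\alpha_j}|\, |z|^{\alpha_j} > 0$ and $t_j$ is the $j$th entry of $f\langle\theta\rangle$. Hence the equation $f(z) = 0$ reads $\sum_j r_j t_j = 0$ with all $r_j$ strictly positive, which is precisely the statement that $0 \in \mathbb{R}_+ f\langle\theta\rangle$. By Lemma \ref{lem:LopsidedCone} this yields $\theta \in \mathcal{LA}'(f)$, as desired.

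There is no real obstacle here: the corollary is an immediate unpacking of the definitions once Lemma \ref{lem:LopsidedCone} is in hand, and the only content is recognising that the moduli of the monomials at $z$ provide the required positive coefficients witnessing the origin in the cone.
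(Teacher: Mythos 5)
Your proof is correct and is essentially the paper's own argument: the paper's proof is the one-line observation that $f(re^{i\theta})=0$ forces $0\in\mathbb{R}_+f\langle\theta\rangle$, which is exactly your decomposition $c_{\alpha_j}z^{\alpha_j}=r_jt_j$ with $r_j=|c_{\alpha_j}|\,|z|^{\alpha_j}>0$ followed by Lemma~\ref{lem:LopsidedCone}. Nothing to add.
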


\begin{proof}
If $f(re^{i\theta}) = 0$ then $0\in\mathbb{R}_+f\langle\theta\rangle$.
\end{proof}

\begin{corollary}
 If $A$ is simple, then $\mathcal{A}'(f) = \mathcal{LA}'(f)$.
\end{corollary}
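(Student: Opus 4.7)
The plan is to establish the reverse inclusion $\mathcal{LA}'(f) \subset \mathcal{A}'(f)$, the other direction being Corollary \ref{cor:Inclusion}. First I would reduce to the case where $\Delta_f$ is full-dimensional: if $\dim \Delta_f = d < n$, then $f$ has $n - d$ monomial homogeneities, and as in Part 3 of the proof of Lemma \ref{lem:EdgeSubdivision} one can project $\mathbf{T}^n \to \mathbf{T}^d$ so that both $\mathcal{A}'(f)$ and $\mathcal{LA}'(f)$ become full preimages of the corresponding objects for a dehomogenized simple polynomial $g$ in $d$ variables with full-dimensional Newton simplex (note that the argument list $f\langle \theta\rangle$ depends on $\theta$ only through this projection, so the lopsided coamoeba behaves exactly like the ordinary one under dehomogenization). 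Hence I may assume the matrix $A$ of \eqref{eqn:A} is square of size $n+1$, and is invertible by affine independence of the vertices of the simplex $\Delta_f$.

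Next, given $\theta \in \mathcal{LA}'(f)$, Lemma \ref{lem:LopsidedCone} supplies strictly positive reals $r_1, \dots, r_{n+1}$ satisfying $\sum_{j} r_j e^{i(\arg c_{\alpha_j} + \langle \alpha_j, \theta\rangle)} = 0$. My aim is to exhibit $z = e^{x+i\theta} \in V(f)$; unravelling $f(z) = 0$, this amounts to realising the $r_j$, up to a common positive scaling $e^{-\mu}$, as the moduli $|c_{\alpha_j}| e^{\langle \alpha_j, x\rangle}$. Taking logarithms, this becomes a linear system for $(\mu, x) \in \mathbb{R}^{n+1}$ whose coefficient matrix is precisely $A^t$; invertibility of $A$ then delivers a unique solution. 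Substituting back, the sum defining $f(z)$ collapses to $e^{-\mu}$ times the vanishing combination above, placing $\theta$ in $\mathcal{A}'(f)$.

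The calculations involved are entirely routine once $A$ has been made invertible; the only step that demands care is the reduction to the full-dimensional case, where one has to verify cleanly that both $\mathcal{A}'$ and $\mathcal{LA}'$ pull back under the torus projection to the respective objects of the dehomogenized polynomial. I do not foresee a genuine obstacle beyond this bookkeeping.
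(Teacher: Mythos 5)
Your argument is correct, and it rests on the same mechanism as the paper's proof: by Lemma \ref{lem:LopsidedCone}, a point $\theta\in\mathcal{LA}'(f)$ yields strictly positive weights $r_j$ with $\sum_j r_j e^{i(\arg(c_{\alpha_j})+\langle\alpha_j,\theta\rangle)}=0$, and simplicity is used exactly to realize these weights, up to a common positive factor, as the moduli of the monomials of $f$ at a point with argument $\theta$. The difference lies only in how that realization is carried out. The paper first normalizes to the standard simplex $f=1+z_1+\dots+z_n$ by an integer affine transformation (tacitly using that $\mathcal{A}'$ and $\mathcal{LA}'$ transform the same way under such changes of variables, cf.\ Corollary \ref{cor:CoAmoebaTransformation}), after which the moduli are free parameters and nothing needs to be solved; you keep $A$ general and instead solve the linear system $\mu+\langle\alpha_j,x\rangle=\log\big(r_j/|c_{\alpha_j}|\big)$, which has a unique solution because its coefficient matrix is $A^t$ and the vertices of a full-dimensional simplex are affinely independent. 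Your route is slightly longer but more self-contained: it avoids the transformation step (whose lopsided analogue the paper never states explicitly) and it handles the lower-dimensional case by an explicit dehomogenization, which the paper's one-line reduction passes over. The only point worth making explicit in a write-up is that $\mathbb{R}_+$ in Lemma \ref{lem:LopsidedCone} consists of strictly positive reals, so that $\log r_j$ is defined and the constructed point $e^{x+i\theta}$ indeed lies on $V(f)\subset(\mathbb{C}^*)^n$.
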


\begin{proof}
By considering integer affine transformations, we see that it is enough to prove this for the standard $n$-simplex 
$f(z) = 1 + z_1 + \dots +z_n$. 
We have that $0\in \mathbb{R}_+ f\langle\theta\rangle$ if and only if we can find $r_0, \dots, r_n\in \mathbb{R}_+$ such that 
$r_0 + r_1e^{i\theta_1} + \dots +r_n e^{i\theta_n} = 0$, and this is equivalent to $\theta \in \mathcal{A}'(f)$.
\end{proof}

Simple hypersurfaces are not the only ones for which the identity $\mathcal{A}'(f) = \mathcal{LA}'(f)$ holds. It will be the case as soon as 
$\mathcal{A}'(f) = \mathbf{T}^n$, and such examples are easy to construct by considering products of polynomials. An example of a nonsimple 
polynomial where $\mathcal{A}'(f) = \mathcal{LA}'(f)$, however $\mathcal{A}'(f)\neq \mathbf{T}^n$, is given by $f(z_1,z_2) = 1 + z_1 + z_2 - r z_1 z_2$ for any 
$r\in \mathbb{R}_+$.

Consider the polynomial
\[
 F(c,z) = \sum_{\alpha\in A} c_\alpha z^\alpha,
\]
obtained by viewing the coefficients $c$ as variables. This polynomial has a coamoeba 
$\mathcal{A}'(F)\subset \mathbf{T}^{N+n}$ which, as $F$ is simple, coincides
with its lopsided coamoeba $\mathcal{LA}'(F)$. As the convex cone $\mathbb{R}_+f\langle\theta\rangle$ coincides with the cone 
$\mathbb{R}_+ F\langle\arg(c), \theta\rangle$, we see that $\mathcal{LA}'(f)$ is
nothing but the intersection of $\mathcal{A}'(F)$ with the sub $n$-torus of $\mathbf{T}^{N+n}$ given by fixing $\Arg(c)$. 
In this manner, the lopsided coamoeba inherits 
some properties of simple coamoebas.

\begin{proposition}
\label{pro:IntrinsicIsTrinomialUnion}
Let\/ $\Tri(f)$ denote the set of all trinomials $g$ one can construct from the set of monomials of $f$. Then
\[
 \overline{\mathcal{LA}'}(f) = \bigcup_{g\in \Tri(f)} \overline{\mathcal{A}'}(g).
\]
\end{proposition}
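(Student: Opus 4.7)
The plan is to exploit the identification $\mathcal{LA}'(f) = \mathcal{A}'(F) \cap T$ made in the paragraph preceding the proposition, where $F(c,z) = \sum_{\alpha \in A} c_\alpha z^\alpha$ is the universal polynomial viewing coefficients as independent variables, and $T \subset \mathbf{T}^{N+n}$ is the sub-$n$-torus cut out by fixing $\Arg(c) = \arg(c^0)$ (with $c^0$ denoting the coefficient vector of $f$). The support of $F$ in $\mathbb{Z}^{N+n}$ is affinely independent since its projection to the first $N$ coordinates consists of $N$ distinct standard basis vectors; hence $F$ is simple, and identity \eqref{eqn:TrinomialUnion} applies to give
\[
\overline{\mathcal{A}'}(F) \;=\; \bigcup_{G \in \Tri(F)} \overline{\mathcal{A}'}(G).
\]
The trinomials of $F$ are in bijection with $\Tri(f)$ via the specialization $c \mapsto c^0$.

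My first step would be to prove that $\overline{\mathcal{LA}'}(f) = \overline{\mathcal{A}'}(F) \cap T$. The inclusion $\subset$ follows immediately since $\mathcal{LA}'(f) = \mathcal{A}'(F) \cap T$ is contained in the closed set $\overline{\mathcal{A}'}(F) \cap T$. For the reverse inclusion, given a point $(\arg(c^0), \theta) \in \overline{\mathcal{A}'}(F)$, identity \eqref{eqn:TruncatedUnion} applied to $F$ yields a subset $S \subset A$ and positive coefficients $\rho_\alpha$ for $\alpha \in S$ satisfying $\sum_{\alpha \in S} \rho_\alpha e^{i(\arg(c^0_\alpha) + \langle \alpha,\theta \rangle)} = 0$. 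A small perturbation of $\theta$ together with an implicit-function-type adjustment of the $\rho_\alpha$, and switching on small positive coefficients for the remaining $\alpha \notin S$, then produces a sequence in $\mathcal{LA}'(f)$ converging to $\theta$.

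Intersecting the trinomial decomposition of $\overline{\mathcal{A}'}(F)$ with $T$ now gives $\overline{\mathcal{LA}'}(f) = \bigcup_G (\overline{\mathcal{A}'}(G) \cap T)$, and it remains to match each $\overline{\mathcal{A}'}(G) \cap T$ with the closed coamoeba of the specialization $g = G|_{c=c^0} \in \Tri(f)$. Since $G$ is itself simple in $\mathbb{C}[c,z]$, $\overline{\mathcal{A}'}(G)$ decomposes as the main coamoeba $\mathcal{A}'(G)$ together with three binomial edge coamoebas; restricting to $T$ produces $\mathcal{A}'(g)$ together with three binomial hyperplanes in $\mathbf{T}^n$. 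When the underlying index triple is affinely independent in $\mathbb{Z}^n$, these hyperplanes are exactly the edge coamoebas of $g$ and one recovers $\overline{\mathcal{A}'}(g)$; in the degenerate (collinear) case the extra hyperplanes are instead absorbed by $\overline{\mathcal{A}'}(g')$ for other trinomials $g' \in \Tri(f)$ containing the relevant binomial as an edge. The main obstacle I expect is the perturbation construction in the reverse inclusion of the first step, which must be carried out uniformly enough to yield an actual convergent sequence, together with the combinatorial bookkeeping needed in the collinear case.
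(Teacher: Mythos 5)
Your route is the same as the paper's: view $\mathcal{LA}'(f)$ as the slice $\mathcal{A}'(F)\cap T$ of the coamoeba of the universal simple polynomial $F$, and pull the trinomial decomposition \eqref{eqn:TrinomialUnion} back along this slice; the paper's proof is exactly this reduction, stated in two lines. Your first step, proving $\overline{\mathcal{LA}'}(f)=\overline{\mathcal{A}'}(F)\cap T$ by a perturbation argument, is both unfinished (you flag it yourself as the main obstacle) and avoidable: since \eqref{eqn:TrinomialUnion} holds without closures when the number of monomials is different from four, you can slice the open identity to get $\mathcal{LA}'(f)=\bigcup_{g\in\Tri(f)}\mathcal{LA}'(g)$ directly from Lemma \ref{lem:LopsidedCone}, and then take closures of this finite union; the exceptional case $N=4$ can be handled separately, e.g.\ via Lemma \ref{lem:VaryingCoefficients}. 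So the delicate closure-versus-intersection question never has to be addressed.

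The genuine gap is in your final matching step. For a trinomial $G=c_iz^{\alpha_i}+c_jz^{\alpha_j}+c_kz^{\alpha_k}$ of $F$, the slice $\mathcal{A}'(G)\cap T$ is $\mathcal{LA}'(g)$, not $\mathcal{A}'(g)$: a zero of $G$ with the arguments of $c$ fixed corresponds, via Lemma \ref{lem:LopsidedCone}, to a point with $0\in\mathbb{R}_+g\langle\theta\rangle$. When $\alpha_i,\alpha_j,\alpha_k$ are affinely independent this coincides with $\mathcal{A}'(g)$, but when they are collinear $\mathcal{LA}'(g)$ is a full-dimensional set strictly larger than $\overline{\mathcal{A}'}(g)$; for $g=1+z_1+z_1^2$ it is the strip $\{\cos\theta_1<0\}$, whereas $\overline{\mathcal{A}'}(g)$ is the pair of hyperplanes $\theta_1=\pm 2\pi/3$. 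Hence what must be ``absorbed by other trinomials'' in the degenerate case is not three extra hyperplanes but an open full-dimensional region, and that absorption is precisely the nontrivial content there; your proposal asserts it without proof. Moreover, no amount of bookkeeping can supply it in general: if $A$ itself is a collinear triple, say $f=1+z+z^2$, then $\Tri(f)=\{f\}$ and the asserted identity would read $\overline{\mathcal{LA}'}(f)=\overline{\mathcal{A}'}(f)$, which fails, since $\theta=3\pi/4$ is not lopsided for $f$ while the right-hand side is the two-point set $\{\pm 2\pi/3\}$. So the statement must be read with the trinomials having affinely independent support (which is what the appeal to \eqref{eqn:TrinomialUnion} presupposes, and what the intended applications use), or else you need an explicit hypothesis guaranteeing enough non-collinear trinomials together with an actual covering argument; your sketch provides neither.
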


\begin{proof}
By the previous discussion we can view $\mathcal{LA}'(f)$ is the intersection of ${\mathcal{A}'}_F$ with the sub $n$-torus of $\mathbf{T}^{N+n}$ given by 
fixing $\Arg(c)$. This
is of course also the case for each trinomial $g\in \Tri(f)$, and hence the identity follows from (\ref{eqn:TrinomialUnion}).
\end{proof}

As was the case in (\ref{eqn:TrinomialUnion}), this identity holds also without taking closures if $N\neq 4$. Lopsided 
coamoebas first appeared under this disguise in \cite{J}. This proposition gives a naive algorithm for determining lopsided 
coamoebas, by determining the coamoebas of each trinomial in $\Tri(f)$.

\begin{figure}[h]
\centering
\includegraphics[width=40mm]{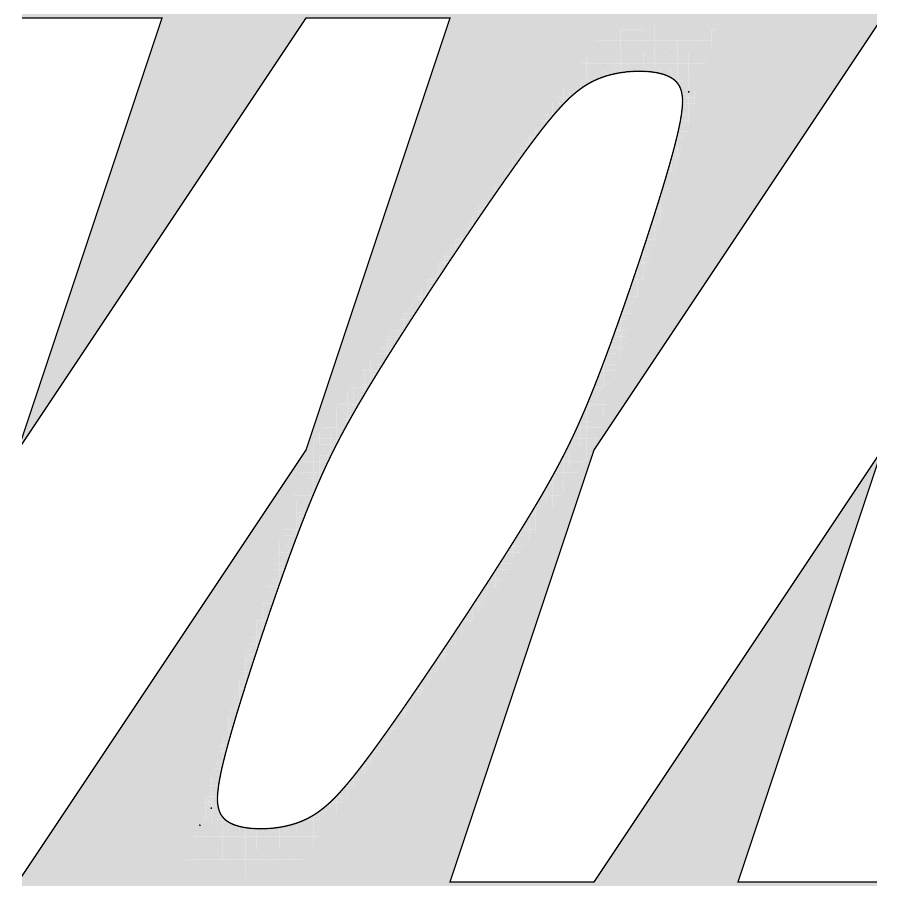}
$\qquad$
\includegraphics[width=40mm]{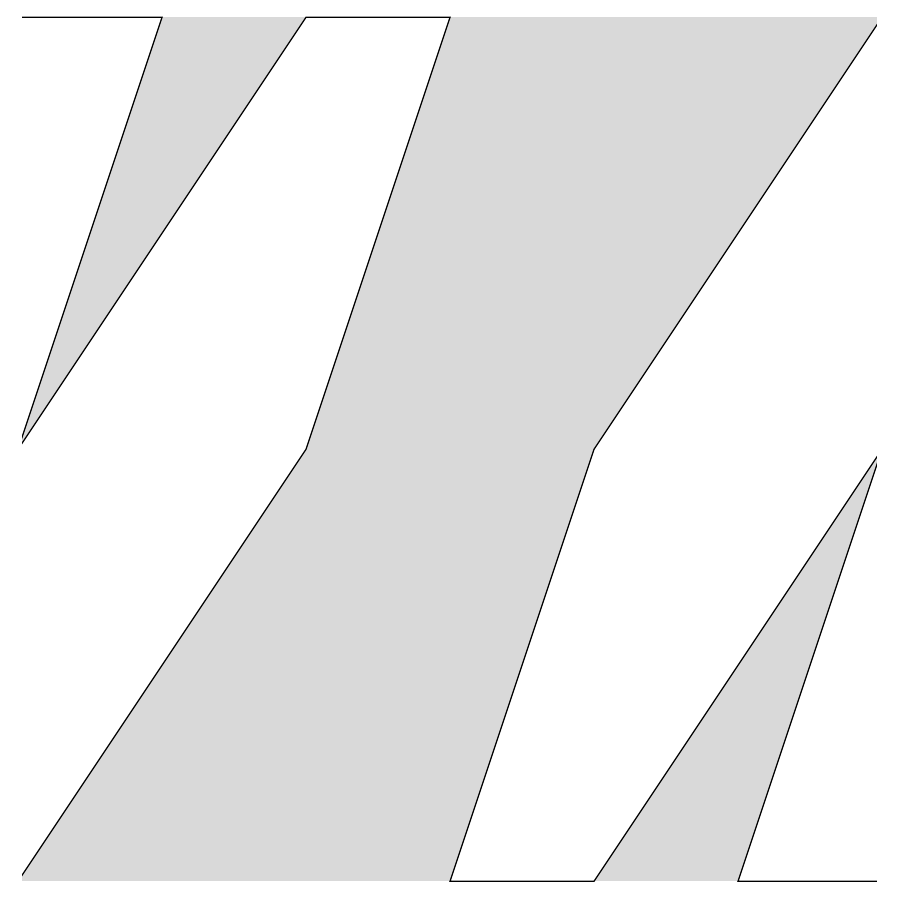}\\
\vspace{5mm}
\includegraphics[width=40mm]{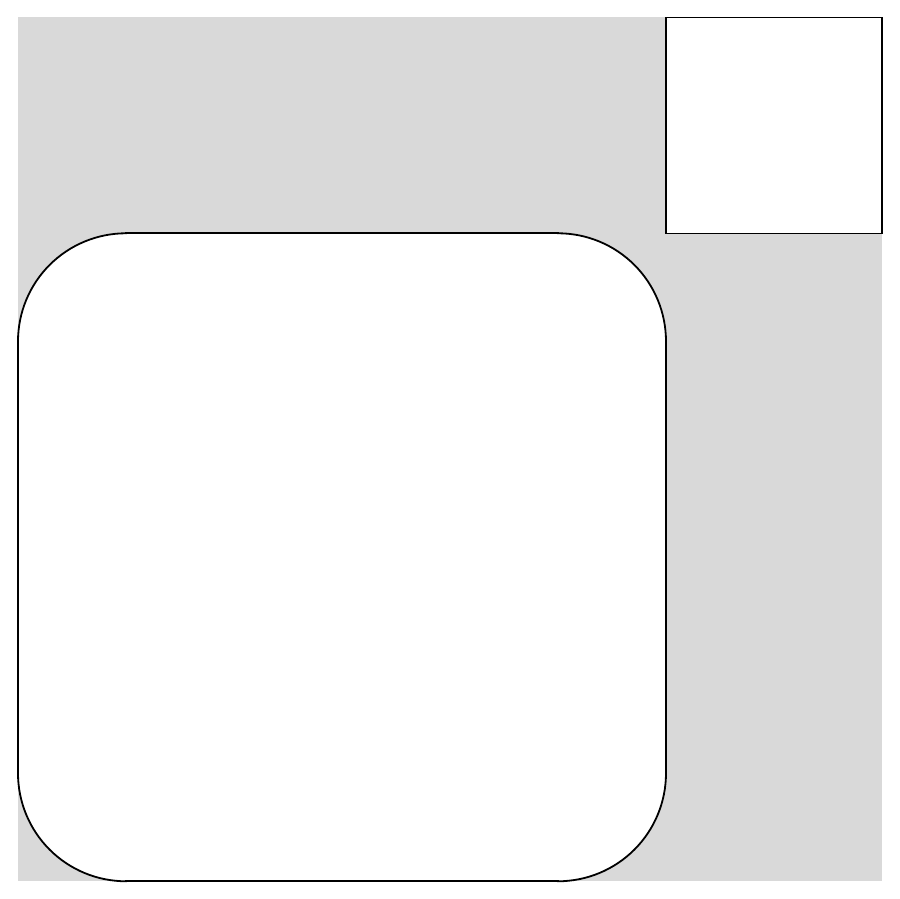}
$\qquad$
\includegraphics[width=40mm]{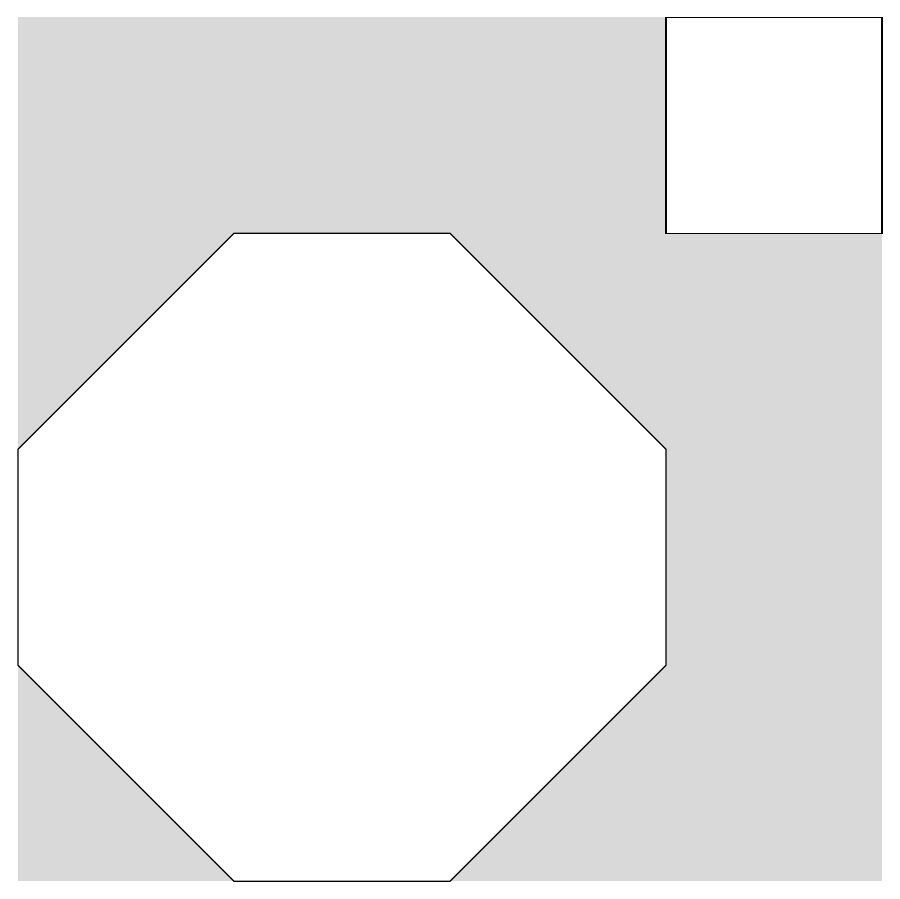}\\
\caption{Above: the coamoeba and lopsided coamoeba of $f(z_1, z_2) = z_1^3 + z_2 + z_2^2- z_1 z_2$. Below: the coamoeba and 
lopsided coamoeba of $f(z_1, z_2) = 1 + z_1 + z_2 + i z_1 z_2$.}
\label{fig:LopsidedExample}
\end{figure}
\begin{definition}
Let $\Bin(f)$ denote the set of all binomials that can be obtained by removing all but two monomials of $f$. The \emph{shell} $ \mathcal{LH}(f)$ of the lopsided coamoeba $\mathcal{LA}'(f)$ is defined as the union
\[
 \mathcal{LH}(f) = \bigcup_{g\in \Bin(f)} \mathcal{A}'(g)
\]
\end{definition}
In the case $n\geq 2$, Proposition \ref{pro:IntrinsicIsTrinomialUnion} states that $\overline{\mathcal{LA}'}(f)$ is the closure of the 
coamoeba of the polynomial $\prod_{g\in \Tri(f)}g(z)$. Recall that the ordinary shell of a coamoeba is defined as the union of all coamoebas
of the edges of its Newton polytope.
As the Newton polytope of each binomial in $\Bin(f)$ is an edge of 
the Newton polytope of some trinomial in $\Tri(f)$, we find that $ \mathcal{LH}(f)$ is a subset of the ordinary shell of this product, 
which motivates the choice of name.

\begin{proposition}
\label{pro:LopsidedBoundary}
The boundary of $\overline{\mathcal{LA}'}(f)$ is contained in $ \mathcal{LH}(f)$.
\end{proposition}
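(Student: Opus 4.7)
My plan is to use Lemma \ref{lem:LopsidedCone} as the central tool, and to reduce the boundary condition to a separation-of-convex-sets argument that produces an explicit binomial vanishing at $\theta$.

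First I would observe that $\mathcal{LA}'(f)$ is already closed, so $\overline{\mathcal{LA}'}(f) = \mathcal{LA}'(f)$ and the boundary in question consists of points $\theta$ that lie in $\mathcal{LA}'(f)$ but are limits of points in its complement. Closedness follows directly from Lemma \ref{lem:LopsidedCone}: if $\theta_n \to \theta$ and for each $n$ there are nonnegative coefficients $r_k^{(n)}$, normalized by $\sum_k r_k^{(n)} = 1$, giving $\sum_k r_k^{(n)} e^{i\psi_k(\theta_n)} = 0$, then a compactness argument on the simplex of coefficients together with the continuity of $\psi_k(\theta) = \arg(c_{\alpha_k}) + \langle \alpha_k, \theta\rangle$ yields nonnegative $r_k$ with $\sum_k r_k = 1$ and $\sum_k r_k e^{i\psi_k(\theta)} = 0$.

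Now fix $\theta \in \partial \overline{\mathcal{LA}'}(f)$ and pick $\theta_n \to \theta$ with $\theta_n \notin \mathcal{LA}'(f)$. By definition each list $f\langle \theta_n\rangle$ sits in an open half-space $H_{\phi_n}$; passing to a subsequence we may assume $\phi_n \to \phi$, and continuity then gives $f\langle\theta\rangle \subset \overline{H_\phi}$. On the other hand, Lemma \ref{lem:LopsidedCone} applied at $\theta$ produces nonnegative $r_1,\dots,r_N$, not all zero, with $\sum_k r_k e^{i\psi_k(\theta)} = 0$. Project this identity onto the inward unit normal of $H_\phi$: every summand contributes a nonnegative number, so the indices $k$ with $r_k > 0$ must satisfy $e^{i\psi_k(\theta)} \in \partial H_\phi$.

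Among those indices, the condition $\sum_k r_k e^{i\psi_k(\theta)} = 0$ along the line $\partial H_\phi$ forces the presence of at least two indices $j$ and $k$ with $e^{i\psi_j(\theta)}$ and $e^{i\psi_k(\theta)}$ pointing in opposite directions along $\partial H_\phi$; equivalently, $\psi_j(\theta) - \psi_k(\theta) \equiv \pi \pmod{2\pi}$. The binomial $g(z) = c_{\alpha_j} z^{\alpha_j} + c_{\alpha_k} z^{\alpha_k}$ then satisfies $\theta \in \mathcal{A}'(g)$, because its vanishing at some $re^{i\theta}$ is precisely the condition that those two arguments differ by $\pi$. Since $g \in \Bin(f)$, this places $\theta$ in $\mathcal{LH}(f)$.

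The substantive step is the continuity/compactness argument that allows us to pass from the sequence of separating half-spaces $H_{\phi_n}$ to a single limiting closed half-space $\overline{H_\phi}$ containing $f\langle \theta\rangle$; once that is in place, the extraction of two antipodal monomials on $\partial H_\phi$ from the relation $\sum_k r_k e^{i\psi_k(\theta)} = 0$ is essentially a one-dimensional convexity observation. No other machinery beyond Lemma \ref{lem:LopsidedCone} and the definition of $\mathcal{A}'(g)$ for binomials $g$ is needed.
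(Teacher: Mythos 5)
Your overall strategy is the right one, and it is the same as the paper's: show that at a boundary point the list $f\langle\theta\rangle$ must contain two antipodal points, and then observe that this puts $\theta$ on the coamoeba of the corresponding binomial, hence in $\mathcal{LH}(f)$. However, your opening claim that $\mathcal{LA}'(f)$ is closed, so that $\overline{\mathcal{LA}'}(f)=\mathcal{LA}'(f)$, is false, and your argument leans on it at the decisive moment. Already for $f(z_1,z_2)=1+z_1+z_2$ the (lopsided) coamoeba consists of the open triangular regions together with the intersection points of the shell lines, while its closure contains the whole lines; a point $\theta$ with $\theta_1=\pi$ and $\theta_2\notin\{0,\pi\}$ lies in $\overline{\mathcal{LA}'}(f)\setminus\mathcal{LA}'(f)$. (The paper itself treats the non-closed $\mathcal{LA}'(f)$ as a genuinely different object, cf.\ Theorem \ref{thm:OpenMapToZonotope}.) Your compactness argument does not prove closedness because Lemma \ref{lem:LopsidedCone} concerns the cone $\mathbb{R}_+f\langle\theta\rangle$ with \emph{strictly positive} coefficients; in the limit some of your normalized coefficients $r_k^{(n)}$ may tend to $0$, and a nonnegative combination representing $0$ only says that $0$ lies in the convex hull of $f\langle\theta\rangle$, which characterizes membership in $\overline{\mathcal{LA}'}(f)$, not in $\mathcal{LA}'(f)$. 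Consequently, the step ``Lemma \ref{lem:LopsidedCone} applied at $\theta$ produces nonnegative $r_k$, not all zero, with $\sum_k r_k e^{i\psi_k(\theta)}=0$'' is unjustified precisely in the case that matters, namely $\theta\in\partial\overline{\mathcal{LA}'}(f)$ with $\theta\notin\mathcal{LA}'(f)$. (A smaller inaccuracy: for $\theta_n\notin\mathcal{LA}'(f)$ you only get $f\langle\theta_n\rangle$ inside a \emph{closed} half-space from the definition of lopsidedness; this is harmless since closed half-spaces suffice for the limiting step.)

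The gap is repairable with the material you already have, and the repaired argument is essentially the paper's. Do not claim closedness; instead, for $\theta\in\overline{\mathcal{LA}'}(f)$ take $\theta_n\to\theta$ with $\theta_n\in\mathcal{LA}'(f)$, apply Lemma \ref{lem:LopsidedCone} at each $\theta_n$, normalize, and pass to a subsequence: this yields a nonzero \emph{nonnegative} combination $\sum_k r_k e^{i\psi_k(\theta)}=0$, which is all you need. Combine this with the closed half-space $\overline{H_\phi}\supset f\langle\theta\rangle$ obtained from a sequence in the open complement of $\overline{\mathcal{LA}'}(f)$ approaching the boundary point $\theta$; your projection onto the inward normal and the one-dimensional zero-sum argument then force two antipodal elements of $f\langle\theta\rangle$ supported by positive $r_j,r_k$, and the conclusion via the binomial $c_{\alpha_j}z^{\alpha_j}+c_{\alpha_k}z^{\alpha_k}$ is correct as you wrote it.
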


\begin{proof}
The boundary of $\overline{\mathcal{LA}'}(f)$ consists of points $\theta$ for which $f\langle\theta\rangle$ contains (at least) two antipodal 
points, which implies that $\theta$ belongs to the coamoeba of the corresponding binomial.
\end{proof}

The focus on $\overline{\mathcal{A}'}(f)$ rather than $\mathcal{A}'(f)$ leads us naturally to consider $\overline{\mathcal{LA}'}(f)$ in more detail. Its 
complement has the following
characterization.

\begin{proposition}
 We have that $\theta\in \overline{\mathcal{LA}'}(f)^c$ if and only if there is an open half-space $H\subset \mathbb{C}$ with 
 $f\langle\theta\rangle \subset H$.
\end{proposition}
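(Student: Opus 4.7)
The plan is to prove the two implications separately; the nontrivial one goes by contrapositive together with a local perturbation argument.

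For the forward direction, suppose $f\langle\theta\rangle \subset H$ for some open half-space $H$. The map $\theta' \mapsto \bigl(e^{i(\arg c_{\alpha_1} + \langle \alpha_1, \theta'\rangle)}, \dots, e^{i(\arg c_{\alpha_N} + \langle \alpha_N, \theta'\rangle)}\bigr)$ is continuous and $H$ is open, so there is an open neighbourhood $U$ of $\theta$ on which $f\langle\theta'\rangle \subset H$. Every such $\theta'$ is lopsided, witnessed by $\overline{H}$ with every component strictly inside, so $U \cap \mathcal{LA}'(f) = \emptyset$. Openness of $U$ then upgrades this to $U \cap \overline{\mathcal{LA}'}(f) = \emptyset$, and therefore $\theta \in \overline{\mathcal{LA}'}(f)^{c}$.

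For the reverse direction I would argue the contrapositive: assume $f\langle\theta\rangle$ is contained in no open half-space and show $\theta \in \overline{\mathcal{LA}'}(f)$. Strict separation of the compact set $\Conv(f\langle\theta\rangle)$ from the origin makes this hypothesis equivalent to $0 \in \Conv(f\langle\theta\rangle)$. Two sub-cases are immediate: if $0$ lies in the relative interior of $\Conv(f\langle\theta\rangle)$, or if $\Conv(f\langle\theta\rangle)$ is contained in a single line through the origin (so $f\langle\theta\rangle \subset \{t,-t\}$ for some $t \in S^{1}$ with both values realised), a direct check against Definition \ref{dfn:LopsidedComplement} shows that $f\langle\theta\rangle$ is not lopsided, so $\theta \in \mathcal{LA}'(f) \subset \overline{\mathcal{LA}'}(f)$.

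In the remaining case, $\Conv(f\langle\theta\rangle)$ is a two-dimensional polygon and $0$ sits on some edge $e$, which must lie on a line $\ell$ through the origin with endpoints an antipodal pair $t$ and $-t$ both realised by components, say $t_{i_{+}}$ and $t_{i_{-}}$. Since $\Conv(f\langle\theta\rangle)$ is $2$-dimensional and contained in the closed half-space $\overline{H}$ bounded by $\ell$, at least one component $t_{k_{0}}$ lies strictly inside the corresponding open $H$. I would then perturb $\theta$ to $\theta_{\epsilon} = \theta + \epsilon v$ with $v = \alpha_{i_{-}} - \alpha_{i_{+}} \neq 0$. The two ``antipodal'' components rotate by angles differing by $\epsilon |v|^{2} > 0$, which tilts the perturbed edge off $\ell$ into the $H^{c}$ direction. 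A short barycentric computation in the triangle with vertices $t_{i_{+}}(\theta_{\epsilon}),\, t_{i_{-}}(\theta_{\epsilon}),\, t_{k_{0}}(\theta_{\epsilon})$ then shows that $0$ lies in its interior for all small $\epsilon > 0$, hence in the interior of $\Conv(f\langle\theta_{\epsilon}\rangle)$. By Lemma \ref{lem:LopsidedCone} this forces $\theta_{\epsilon} \in \mathcal{LA}'(f)$, and letting $\epsilon \to 0^{+}$ places $\theta$ in $\overline{\mathcal{LA}'}(f)$. The main obstacle is precisely this barycentric verification together with non-degeneracy of the triangle, both of which reduce to the distinctness of $\alpha_{i_{+}}$ and $\alpha_{i_{-}}$ and the fact that, after rotating coordinates so $t = 1$, the component $t_{k_{0}}$ has nonzero imaginary part.
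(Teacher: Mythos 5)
Your proof is correct in substance but follows a genuinely different route from the paper's. The paper treats the ``if'' direction as clear (your continuity argument simply spells it out) and proves the ``only if'' direction by reduction to trinomials: if no open half-space contains $f\langle\theta\rangle$, the list contains an antipodal pair, so $\theta$ lies in $\overline{\mathcal{A}'}(g)$ for a simple trinomial $g\in\Tri(f)$; the explicit description of simple coamoebas from Section~2 together with the identity $\mathcal{A}'(g)=\mathcal{LA}'(g)$ then yields a sequence of interior points of $\mathcal{A}'(g)$, hence of non-lopsided points for $f$, converging to $\theta$. You instead construct the approximating sequence by hand: after the correct reduction via separation to $0\in\Conv(f\langle\theta\rangle)$, and after the easy cases in which $f\langle\theta\rangle$ is already not lopsided, you perturb $\theta$ in the direction $v=\alpha_{i_-}-\alpha_{i_+}$ (which is exactly the direction transverse to the binomial hyperplane of $\mathcal{LH}(f)$ through $\theta$) and verify by a sign computation that $0$ enters the interior of the triangle spanned by $t_{i_+},t_{i_-},t_{k_0}$, whence $\theta_\epsilon\in\mathcal{LA}'(f)$ by Lemma \ref{lem:LopsidedCone}. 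This is more elementary and self-contained, using only Definition \ref{dfn:LopsidedComplement} and Lemma \ref{lem:LopsidedCone} rather than the structure theory of simple coamoebas; the paper's proof is shorter but leans on that machinery.

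One step needs repair: the perturbation must open the antipodal pair \emph{away} from $t_{k_0}$, and this fixes an orientation that your write-up leaves free. Normalizing $t_{i_+}(\theta)=1$, each component rotates by $\epsilon\langle\alpha_j,v\rangle$, so $t_{i_-}$ gains the angle $\epsilon|v|^2$ relative to $t_{i_+}$ and the perturbed chord dips into the lower half-plane; the three signed determinants in your barycentric computation then all acquire the same sign precisely when $\Im\, t_{k_0}>0$. If instead $\Im\, t_{k_0}<0$, the same perturbation pushes the chord toward $t_{k_0}$ and $f\langle\theta_\epsilon\rangle$ becomes lopsided (already for a trinomial, $0$ leaves the convex hull), so the computation as claimed fails; one must perturb by $-v$, or equivalently swap the labels $i_+$ and $i_-$. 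Thus ``$t_{k_0}$ has nonzero imaginary part'' is not a sufficient hypothesis as stated: you must choose the labelling of the antipodal pair (or the sign of the perturbation) so that $t_{k_0}$ lies in the open upper half-plane after normalization. With that one-line adjustment the argument is complete.
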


\begin{proof}
The ``if'' part is clear. To show ``only if'', note that if $\theta\in \mathcal{LA}'(f)^c$, then there is an open half-space $H$ with 
$f\langle\theta\rangle \subset \overline{H}$. If there is no open half-space $H$ with $f\langle\theta\rangle\subset H$, then $f\langle\theta\rangle$ 
contains two antipodal points. Then we can find a simple trinomial $g\in \Tri(f)$ such that 
$\theta\in \overline{\mathcal{A}'}(g)$, and by the description of simple trinomials in the previous section there is a sequence 
$\{\theta_k\}_{k=1}^\infty\subset \inter(\mathcal{A}'(g))$ such that $\lim_{k\rightarrow \infty}\theta_k = \theta$. 
As $g$ is simple we have that $\mathcal{A}'(g)= \mathcal{LA}'(g)$, hence for each $\theta_k$ the list $g\langle\theta_k\rangle$ is not 
lopsided. Then neither is $f\langle\theta_k\rangle$, 
showing that $\{\theta_k\}_{k=1}^\infty\subset \mathcal{LA}'(f)$, and as a consequence that $\theta \in \overline{\mathcal{LA}'}(f)$.
\end{proof}

Let us end this section by describing the relation between the sets $\ccc(\overline{\mathcal{A}'}(f))$ and $\ccc(\overline{\mathcal{LA}'}(f))$, 
beginning with yet another characterization of $\mathcal{LA}'(f)$.

\begin{lemma}
\label{lem:VaryingCoefficients}
Let $f_r(z)$ denote the polynomial\/ $\sum_{\alpha\in A} r_\alpha c_\alpha z^\alpha$, where we have varied the moduli of the 
coefficients of $f$ by $r = (r_\alpha) \in \mathbb{R}_+^N$. Then
\[
 \mathcal{LA}'(f) = \bigcup_{r\in \mathbb{R}_+^N} \mathcal{A}'(f_r).
\]
\end{lemma}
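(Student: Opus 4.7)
The plan is to prove the two inclusions separately, using Lemma \ref{lem:LopsidedCone} as the essential tool: it already reduces membership in $\mathcal{LA}'(f)$ to the condition $0\in \mathbb{R}_+f\langle\theta\rangle$, and the present lemma is essentially a reformulation of that characterization into the language of varying coefficient moduli.

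For the inclusion $\bigcup_{r\in \mathbb{R}_+^N}\mathcal{A}'(f_r)\subseteq \mathcal{LA}'(f)$ I would take $\theta\in \mathcal{A}'(f_r)$, so there is $z\in V(f_r)$ with $\Arg(z)=\theta$. Writing $z = \rho e^{i\theta}$ with $\rho\in\mathbb{R}_+^n$ and expanding the equation $f_r(z)=0$ exhibits the zero as the strictly positive combination of the entries $t_k = e^{i(\arg(c_{\alpha_k}) + \langle \alpha_k, \theta\rangle)}$ of $f\langle\theta\rangle$ with coefficients $r_{\alpha_k}|c_{\alpha_k}|\rho^{\alpha_k}>0$. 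Hence $0\in \mathbb{R}_+f\langle\theta\rangle$ and Lemma \ref{lem:LopsidedCone} gives $\theta\in\mathcal{LA}'(f)$.

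For the reverse inclusion, I would begin from $\theta\in \mathcal{LA}'(f)$; Lemma \ref{lem:LopsidedCone} supplies strictly positive scalars $\lambda_k$ with $\sum_k \lambda_k t_k = 0$. The natural recipe is then to set $\rho = (1,\dots,1)$ and $r_{\alpha_k} = \lambda_k/|c_{\alpha_k}|$, which again lies in $\mathbb{R}_+$; substituting gives $f_r(e^{i\theta}) = \sum_k r_{\alpha_k} c_{\alpha_k} e^{i\langle \alpha_k,\theta\rangle} = \sum_k \lambda_k t_k = 0$, so $\theta\in\mathcal{A}'(f_r)$.

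The only subtle point is the strict positivity of the $\lambda_k$, since only then do we land in $\mathbb{R}_+^N$ rather than in its closure; but this is built into the paper's convention on $\mathbb{R}_+$ (as used in the proof of Lemma \ref{lem:LopsidedCone}), and with that convention the argument is effectively a one-line translation of that earlier lemma. So there is no genuine obstacle — the substance of the statement is already encoded in Lemma \ref{lem:LopsidedCone}, and what remains is just converting a positive combination $\sum_k \lambda_k t_k = 0$ into a coefficient rescaling of $f$.
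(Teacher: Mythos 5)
Your proof is correct and follows essentially the same route as the paper: both directions are read off from Lemma \ref{lem:LopsidedCone}, using that $f_r\langle\theta\rangle = f\langle\theta\rangle$ for $r\in\mathbb{R}_+^N$ and that a strictly positive relation $\sum_k \lambda_k t_k = 0$ converts into a zero of $f_r$ at $e^{i\theta}$ via $r_{\alpha_k}=\lambda_k/|c_{\alpha_k}|$. The paper's proof is just a terser version of exactly this argument, and your remark on the strict positivity convention for $\mathbb{R}_+$ is consistent with how the paper uses it.
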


\begin{proof}
The statement follows from Lemma \ref{lem:LopsidedCone}. If $\theta\in \mathcal{A}'(f_r)$, then $0\in \mathbb{R}_+ f_r\langle\theta\rangle$. 
Conversely, if $0\in \mathbb{R}_+ f\langle\theta\rangle$, then there exist an $r\in \mathbb{R}_+^N$ such that $f_r(e^{i\theta}) = 0$.
\end{proof}

\begin{proposition}
\label{pro:ComplementInjection}
 Each connected component of $\overline{\mathcal{A}'}(f)^c$ contains at most one connected component of $\overline{\mathcal{LA}'}(f)^c$. 
 \end{proposition}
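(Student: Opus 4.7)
Suppose for contradiction that distinct connected components $\Theta_1,\Theta_2$ of $\overline{\mathcal{LA}'}(f)^c$ are both contained in a single component $E$ of $\overline{\mathcal{A}'}(f)^c$. The plan is, working in $\mathbb{R}^n$ where $E$ is convex by Bochner's tube theorem (recalled in Section~\ref{sec:prel}), to pick $\theta_j\in\Theta_j$ and join them by a straight-line segment $l\subset E$, so that $l\cap\overline{\mathcal{A}'}(f)=\emptyset$ while $l\cap\overline{\mathcal{LA}'}(f)\neq\emptyset$. After a small generic perturbation of the endpoints inside the open sets $\Theta_j$, I may also assume that $l$ meets $\mathcal{LA}'(f)$ transversally, so that Lemma~\ref{lem:VaryingCoefficients} supplies $r^*\in\mathbb{R}_+^N$ and $\theta^*\in l$ with $\theta^*\in\mathcal{A}'(f_{r^*})$.

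Next I would consider the linear interpolation $r(s)=(1-s)(1,\dots,1)+s\,r^*$, which lies in $\mathbb{R}_+^N$ for every $s\in[0,1]$. Rescaling moduli by positive reals does not affect the arguments of the coefficients, so $\mathcal{LA}'(f_{r(s)})=\mathcal{LA}'(f)$ and in particular $\overline{\mathcal{A}'}(f_{r(s)})\subset\overline{\mathcal{LA}'}(f)$; hence the endpoints $\theta_j\in\overline{\mathcal{LA}'}(f)^c$ satisfy $\theta_j\notin\overline{\mathcal{A}'}(f_{r(s)})$ for every $s$. Setting $s^*=\inf\{s\in[0,1]\,:\,l\cap\overline{\mathcal{A}'}(f_{r(s)})\neq\emptyset\}$, the choice of $r^*$ and continuity of the coamoebas give $s^*\in(0,1]$, with $l$ meeting $\overline{\mathcal{A}'}(f_{r(s^*)})$. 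Applying Lemma~\ref{lem:EdgeSubdivision} to $f_{r(s^*)}$, whose Newton polytope is still $\Delta_f$, then produces an edge $\Gamma\subset\Delta_f$ with $l\cap\mathcal{A}'(f_{r(s^*),\Gamma})\neq\emptyset$.

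For the contradiction, I would parametrize $\Gamma$ by its primitive lattice direction $v$ and write $f_{r(s),\Gamma}=z^{\alpha_0}p_s(z^v)$ for a univariate polynomial $p_s$ whose coefficients depend continuously on $s$. The coamoeba of this edge polynomial is the family of parallel hyperplanes $\{\theta\in\mathbb{R}^n\,:\,\langle v,\theta\rangle=\arg(w)+2\pi k\}$ as $w$ runs over the roots of $p_s$ and $k$ over $\mathbb{Z}$. Picking a hyperplane realising $l\cap\mathcal{A}'(f_{r(s^*),\Gamma})\neq\emptyset$ and tracking it continuously back to $s=0$, the real-valued function $q(s)=\arg(w(s))+2\pi k$ must avoid the interval $I=[\langle v,\theta_1\rangle,\langle v,\theta_2\rangle]$ for all $s<s^*$ (else $l$ would meet $\overline{\mathcal{A}'}(f_{r(s)})$ earlier) while lying in $\overline I$ at $s=s^*$; continuity forces $q(s^*)\in\partial I=\{\langle v,\theta_1\rangle,\langle v,\theta_2\rangle\}$. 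The corresponding hyperplane therefore passes through an endpoint $\theta_j$, giving $\theta_j\in\overline{\mathcal{A}'}(f_{r(s^*)})\subset\overline{\mathcal{LA}'}(f)$, which contradicts $\theta_j\in\Theta_j\subset\overline{\mathcal{LA}'}(f)^c$.

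The hard part will be the last paragraph: combining continuous dependence of the roots of the univariate polynomial $p_s$ on $s$ with an intermediate value argument on $S^1$ to conclude that the moving family of parallel hyperplanes can first touch the segment $l$ only at an endpoint. The perturbation step in the first paragraph is what rules out degenerate tangencies that would otherwise spoil this conclusion, and the identity $\mathcal{LA}'(f_{r(s)})=\mathcal{LA}'(f)$ along the deformation is what keeps the endpoints $\theta_j$ safely away from $\overline{\mathcal{A}'}(f_{r(s)})$ throughout.
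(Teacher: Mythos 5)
Your strategy is essentially the paper's: convexity of the components of $\overline{\mathcal{A}'}(f)^c$ via Bochner's theorem gives the segment $l$, Lemma \ref{lem:VaryingCoefficients} produces a deformation of the moduli whose coamoeba meets $l$, Lemma \ref{lem:EdgeSubdivision} reduces to an edge $\Gamma$, and the decisive step is continuity of the roots of the univariate edge polynomial along the moduli deformation, combined with the fact that the endpoints of $l$ never lie in any of the deformed (lopsided) coamoebas. The paper organizes this as a univariate case plus a reduction, while you run a single homotopy $r(s)$ and track a hyperplane of the edge family; your ``the hyperplane must first hit $l$ at an endpoint'' is the mirror image of the paper's ``the tracked root's argument stays trapped in the open interval''. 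However, there is a concrete unproved step at the pivot of your argument: to apply Lemma \ref{lem:EdgeSubdivision} to $f_{r(s^*)}$ you need $l\cap\overline{\mathcal{A}'}(f_{r(s^*)})\neq\emptyset$, i.e.\ that the infimum defining $s^*$ is attained, and you justify this only by ``continuity of the coamoebas''. Closed coamoebas do not vary continuously with the coefficients, and the upper semicontinuity you need (that limits of points $\theta_k\in l\cap\overline{\mathcal{A}'}(f_{r(s_k)})$, $s_k\downarrow s^*$, lie in $\overline{\mathcal{A}'}(f_{r(s^*)})$) is nontrivial: the witnessing zeros may escape $(\mathbb{C}^*)^n$, and controlling their arguments requires a parametric version of Part 1 of the proof of Lemma \ref{lem:EdgeSubdivision}. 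As written, that step is a genuine gap.

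The gap is repairable without any semicontinuity, and the repair brings you even closer to the paper: apply Lemma \ref{lem:EdgeSubdivision} at $s=1$, where $l$ honestly meets $\mathcal{A}'(f_{r^*})$, to obtain the edge $\Gamma$ and a hyperplane of $\mathcal{A}'(f_{r^*,\Gamma})$ meeting $l$; then track the corresponding root of $p_s$ downward. Since $q(1)\in I$ while $q(0)\notin I$ (at $s=0$ the hyperplane lies in $\mathcal{A}'(f_\Gamma)\subset\overline{\mathcal{A}'}(f)$ by \eqref{eqn:TruncatedUnion}, and this set misses $l$ by hypothesis), continuity of the single function $q$ gives a first parameter $\hat{s}$ with $q(\hat{s})\in I$, necessarily $q(\hat{s})\in\partial I$, so an endpoint $\theta_j$ lies on a hyperplane of $\mathcal{A}'(f_{r(\hat{s}),\Gamma})\subset\overline{\mathcal{A}'}(f_{r(\hat{s})})\subset\overline{\mathcal{LA}'}(f)$, the desired contradiction. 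Two smaller remarks: the transversality obtained by perturbing the endpoints plays no role in the hyperplane tracking (the intermediate value argument handles tangencies); where such a remark is genuinely needed is in guaranteeing that $l$ meets $\mathcal{LA}'(f)$ and not merely its closure, so that Lemma \ref{lem:VaryingCoefficients} applies at all --- a point the paper itself passes over quickly.
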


\begin{proof}
It is clear that each connected component of $\overline{\mathcal{LA}'}(f)^c$ is included in some connected component of 
$\overline{\mathcal{A}'}(f)^c$, we only have to show that no two connected components of $\overline{\mathcal{LA}'}(f)^c$ are contained in the 
same connected component of $\overline{\mathcal{A}'}(f)^c$. We will show this by proving that any line segment $l$ with endpoints in 
$\overline{\mathcal{LA}'}(f)^c$ that intersect $\mathcal{LA}'(f)$, also intersect $\overline{\mathcal{A}'}(f)$.

Consider first the case when $f(z)$ is a univariate polynomial. Let $\theta_1, \theta_2 \in \overline{\mathcal{LA}'}(f)^c$ be the endpoints
of a line segment $l$, i.e.\ $l = [\theta_1, \theta_2]$,
and assume
that there exist a $\theta\in (\theta_1, \theta_2)$ with $\theta \in \mathcal{LA}'(f)$. Then Lemma \ref{lem:VaryingCoefficients} gives
an $r\in \mathbb{R}_+^N$ such that $\theta \in \mathcal{A}'(f_r)$. Let $\gamma$ be the path from $c$ to $rc$ in the coefficient space $(\mathbb{C}^*)^A$
given by
\[
 \gamma(t)_\alpha = r_\alpha^{1-t} c_\alpha, \quad t\in [0,1],
\]
and let $f_t$ denote the polynomial with coefficients $\gamma(t)$. Applying Lemma \ref{lem:VaryingCoefficients} once more, we find that 
for each $t \in [0,1]$ it holds that $\mathcal{A}'(f_t) \subset \mathcal{LA}'(f)$. In particular, for each $t$, we have that $\theta_1, \theta_2 \notin \mathcal{A}'(f_t)$.
Let $z\in \mathbb{C}^*$ denote a root of $f_0(z) = f_r(z)$ such that $\arg(z) = \theta$. It is well known that the roots of $f_t$ in $\mathbb{C}^*$ vary
continously with $r$. That is, we can find a continous path $t\mapsto z(t)$ in $\mathbb{C}^*$ such that $z(0) = z$ and
furthermore, for each $t\in [0,1]$ we have that $z(t)$ is a root of the polynomial $f_t(z)$. Notice that if $f_t(z)$ has a root of higher 
multiplicity at $z(t)$, then the path $t\mapsto z(t)$ is neither smooth nor
unique, however we need only that it is continous. Indeed, the continuity of the path $t\mapsto z(t)$ in $\mathbb{C}^*$ implies continuity of the path
$t \mapsto \arg(z(t))$. Finally, the continuity of the path $ t\mapsto \arg(z(t))$, together with the facts that $\theta_1, \theta_2 \notin \mathcal{A}'(f_t)$
for each $t\in [0,1]$ and that $\arg(z(0)) = \theta \in (\theta_1, \theta_2)$, implies that $\arg(z(t)) \in (\theta_1, \theta_2)$ for each $t$.
In particular, $\arg(z(1))\in (\theta_1, \theta_2)$, which proves the proposition in this case.

Consider now the case when $\Delta_f$ is one dimensional. Then $f(z)$ has $n-1$ quasi-homogeneities, and the coamoeba $\mathcal{A}'(f)$ consist
of a family of parallel hyperplanes, each orthagonal to $\Delta_f$. Dehomogenizing $f(z)$ corresponds to a projection $\mathbb{R}^n \rightarrow \mathbb{R}$
such that the hyperplanes in $\mathcal{A}'(f)$ are precisely the fibers over the points in the coamoeba of the dehomogenization $\tilde f$ of $f(z)$. 
This projection will map a line segment in $\mathbb{R}^n$ with endpoints in $\overline{\mathcal{LA}'}(f)^c$ that intersect $\mathcal{LA}'(f)$, to a line 
segment in $\mathbb{R}$ with endpoints
in $\overline{\mathcal{LA}'}(\tilde f)^c$ that intersect $\mathcal{LA}'(\tilde f)$. Hence, this case follows from the univariate case.

Now consider an arbitrary multivariate polynomial $f(z)$, and let $l$ be a line segment in $\mathbb{R}^n$ with endpoints
in $\overline{\mathcal{LA}'}(f)^c$ that intersect $\mathcal{LA}'(f)$.
By Lemma \ref{lem:VaryingCoefficients} there exists an $r\in \mathbb{R}_+^N$ such that $l$ intersect 
$\mathcal{A}'(f_r)$. Referring to Lemma \ref{lem:VaryingCoefficients} again, we find that $\overline{\mathcal{A}'}(f_r) \subset \overline{\mathcal{LA}'}(f)$, and hence
the endpoints of $l$ are contained in $\overline{\mathcal{A}'}(f_r)^c$. Applying Lemma \ref{lem:EdgeSubdivision} to the polynomial
$f_r$, we find an edge 
$\Gamma\subset \Delta_{f_r} = \Delta_f$ such that $l$ intersect $\mathcal{A}'((f_r)_\Gamma)$. This implies, by Lemma \ref{lem:VaryingCoefficients}, that
$l$ intersect $\mathcal{LA}'((f_r)_\Gamma) = \mathcal{LA}'(f_\Gamma)$. As the identity \eqref{eqn:TruncatedUnion} implies that 
$\overline{\mathcal{LA}'}(f_\Gamma) \subset \overline{\mathcal{LA}'}(f)$, we find that the endpoints of $l$ are contained in 
$\overline{\mathcal{LA}'}(f_\Gamma)^c$.
Since $\Gamma$ is one dimensional, we can conclude by the previous case that $l$ intersect $\mathcal{A}'(f_\Gamma)$. The identity \eqref{eqn:TruncatedUnion}
yields that $l$ intersect $\overline{\mathcal{A}'}(f)$.
\end{proof}

\section{The order map for the lopsided coamoeba}
\label{sec:gale}
The aim of this section is to provide an order map for the lopsided coamoeba.  The role played by the point configuration 
$A\subset \mathbb{Z}^n$ for the order map of the lopsided amoeba, is here given to a so-called \emph{dual matrix $B$}. Recall that 
$|A| = N$, that we are under the assumption that $\Delta_f$ is of full dimension, and that the integer $m = N - n - 1$ is 
the codimension of $A$. A dual matrix of $A$  is by definition an integer $N\times m$-matrix of full rank such that 
$AB = 0$. If in addition the columns of $B$ span the $\mathbb{Z}$-kernel of $A$, then $B$ is known as a \emph{Gale dual} of $A$. We 
denote by $\mathbb{Z}[B]\subset \mathbb{Z}^m$ the lattice generated by the \emph{rows} of $B$, and note that $B$ is a Gale dual of $A$ if 
and only if $\mathbb{Z}[B] = \mathbb{Z}^m$. In this manner, assuming that $B$ is a Gale dual will make our statements more streamlined, 
however it is not a necessary assumption in order to develop the theory.
We will label the rows of $B$ as $b_0, \dots, b_{n+m}$. The zonotope $\mathcal{Z}_B$ is defined as the set
\begin{equation}
\label{eqn:ZonotopeDescription}
 \mathcal{Z}_B = \bigg\{ \sum_{j=0}^{m+n} \frac\pi2 \,\mu_jb_j \,\bigg\vert \, |\mu_j|\leq 1, \,j=0, \dots, m+n\bigg\},
\end{equation}
see also \cite{Bf} and \cite{NP}.

Fix a polynomial $f \in (\mathbb{C}^*)^A$, i.e.\ with notation as in \eqref{eqn:polynomial} and \eqref{eqn:A} we fix a set of 
coefficients $c_{\alpha_1}, \dots, c_{\alpha_N}$.
Let us denote by  $\arg_\pi\colon \mathbb{C}^*\rightarrow (-\pi,\pi]$ the principal branch of the $\arg$-mapping, while $\Arg_\pi$ 
denotes the map acting on vectors componentwise by $\arg_\pi$.
\begin{lemma}
\label{lemma1}
For a fix polynomial $f$, and a fix point $\alpha\in A$ (that is, with the above notation $\alpha = \alpha_i$ for some $i$), 
consider the function $p_\alpha^k(\theta)$, with domain\/ $\mathbb{R}^n$, given by
\[
p^k_\alpha( \theta) = \arg_\pi\Big(\frac{c_{\alpha_k}e^{i\langle\alpha_k,\theta\rangle}}{c_\alpha e^{i\langle\alpha,\theta\rangle}}\Big)-\arg_\pi(c_{\alpha_k})+\arg_\pi(c_\alpha)-\langle\alpha_k-\alpha, \theta\rangle.
\]
Then $p^k_\alpha$ maps\/ $\mathbb{R}^n$ into\/ $2\pi \mathbb{Z}$, and furthermore it is locally constant off the coamoeba of the binomial 
$c_\alpha z^\alpha + c_{\alpha_k}z^{\alpha_k}$, as viewed in\/ $\mathbb{R}^n$.
\end{lemma}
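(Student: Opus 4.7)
The plan is to verify both claims by unpacking the definition and tracking the branch discontinuities of $\arg_\pi$.

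First I would establish that $p^k_\alpha$ takes values in $2\pi\mathbb{Z}$. The key observation is that, forgetting branches, a ``formal'' argument of the ratio inside $\arg_\pi$ is simply
\[
\arg_\pi(c_{\alpha_k}) - \arg_\pi(c_\alpha) + \langle \alpha_k - \alpha, \theta\rangle,
\]
since $\arg(c_{\alpha_k}e^{i\langle\alpha_k,\theta\rangle}) - \arg(c_\alpha e^{i\langle\alpha,\theta\rangle})$ picks up exactly these four contributions. The principal-branch value $\arg_\pi$ of the ratio differs from this formal sum by an integer multiple of $2\pi$, so the defining combination
\[
p^k_\alpha(\theta) = \arg_\pi\!\Big(\tfrac{c_{\alpha_k}e^{i\langle\alpha_k,\theta\rangle}}{c_\alpha e^{i\langle\alpha,\theta\rangle}}\Big) - \Bigl(\arg_\pi(c_{\alpha_k}) - \arg_\pi(c_\alpha) + \langle\alpha_k-\alpha,\theta\rangle\Bigr)
\]
is precisely this discrepancy, hence lies in $2\pi\mathbb{Z}$.

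Next I would address the continuity off the binomial coamoeba. The map
\[
\theta \longmapsto \frac{c_{\alpha_k}e^{i\langle\alpha_k,\theta\rangle}}{c_\alpha e^{i\langle\alpha,\theta\rangle}}
\]
is smooth from $\mathbb{R}^n$ to $\mathbb{C}^*$, and the principal branch $\arg_\pi$ is continuous on $\mathbb{C}^* \setminus \mathbb{R}_{<0}$. Thus $p^k_\alpha$ is continuous at every $\theta$ for which this ratio avoids the negative real axis. The complementary condition, that the ratio is a negative real number, is exactly the requirement that $c_\alpha e^{i\langle\alpha,\theta\rangle}$ and $c_{\alpha_k} e^{i\langle\alpha_k,\theta\rangle}$ be anti-parallel in $\mathbb{C}$, which in turn is the condition defining $\mathcal{A}'(c_\alpha z^\alpha + c_{\alpha_k} z^{\alpha_k})$ as viewed in $\mathbb{R}^n$. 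The linear term $-\langle\alpha_k-\alpha,\theta\rangle$ in $p^k_\alpha$ is smooth, so it introduces no further discontinuities.

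Combining the two steps, $p^k_\alpha$ is a continuous $2\pi\mathbb{Z}$-valued function on $\mathbb{R}^n \setminus \mathcal{A}'(c_\alpha z^\alpha + c_{\alpha_k} z^{\alpha_k})$, and a continuous function into a discrete set is locally constant. The only ``obstacle'' is really just a sanity check that the coamoeba of a binomial indeed coincides with the lift of the negative real axis under the ratio map, but this is immediate from the definition of the coamoeba (or from Proposition~\ref{pro:IntrinsicIsTrinomialUnion} applied to binomials).
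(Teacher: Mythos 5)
Your argument is correct and is essentially the paper's own proof: you identify $p^k_\alpha(\theta)$ as the $2\pi\mathbb{Z}$-valued discrepancy between the principal-branch argument of the ratio and its formal argument, and then observe that this discrepancy is locally constant away from the branch locus, where the two monomials are anti-parallel, which is exactly the coamoeba of the binomial $c_\alpha z^\alpha + c_{\alpha_k}z^{\alpha_k}$ viewed in $\mathbb{R}^n$. The only cosmetic remark is that the parenthetical appeal to Proposition~\ref{pro:IntrinsicIsTrinomialUnion} is unnecessary (and not quite the right reference for binomials); the direct description of a binomial coamoeba as a family of hyperplanes, as in the paper, suffices.
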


\begin{proof}
For each $\theta$ we have that 
\[
\arg_\pi\Big(\frac{c_{\alpha_k}e^{i\langle\alpha_k,\theta\rangle}}{c_\alpha e^{i\langle\alpha,\theta\rangle}}\Big) = \arg_\pi(c_{\alpha_k}) - \arg_\pi(c_\alpha) + \langle\alpha_k - \alpha, \theta\rangle + 2 \pi j(\theta),
\]
where $j(\theta) \in \mathbb{Z}$. We see that $p_\alpha^k(\theta) = 2\pi j(\theta)$, and hence $p_\alpha^k$ maps $\mathbb{R}^n$ into 
$2\pi \mathbb{Z}$. It is clear that $j(\theta)$ is locally constant, as a function of $\theta$, off the set where 
$\arg_\pi\big(c_{\alpha_k}e^{i\langle\alpha_k,\theta\rangle}/c_\alpha e^{i\langle\alpha,\theta\rangle}\big) = \pi$. This set is 
precisely the coamoeba of the binomial $c_\alpha z^\alpha + c_{\alpha_k}z^{\alpha_k}$, as viewed in $\mathbb{R}^n$, which proves 
the second statement.
\end{proof}

In particular the vector valued function 
\[
p_\alpha(\theta) = \big(p_\alpha^1(\theta), \dots, p_\alpha^N(\theta)\big)
\]
is constant on each cell of the hyperplane arrangement $ \mathcal{LH}(f)$, considered as subsets of $\mathbb{R}^n$.

\begin{lemma}
\label{lemma2}
With notation as in the previous lemma, define $v_\alpha\colon \mathbb{R}^n \rightarrow \mathbb{R}^m$ by
\begin{equation}
\label{eqn:SecondLemma}
v_\alpha(\theta) = \big(\Arg_\pi(c)+p_\alpha(\theta)\big)B,
\end{equation}
where the multiplication with $B$ is usual matrix multiplication. Then $v_\alpha$ is well-defined on\/ $\mathbf{T}^n$ (i.e. it is periodic
in each $\theta_i$ with period $2\pi$), it is 
invariant under multiplication of $f$ by a Laurent monomial, and furthermore if $\theta \in \overline{\mathcal{LA}'}(f)^c$, 
then $v_\alpha(\theta) \in \inter(\mathcal{Z}_B)$.
\end{lemma}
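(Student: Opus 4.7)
The proof splits into three parts. For the periodicity on $\mathbf{T}^n$ and the Laurent-monomial invariance, the plan is to exploit two identities that follow at once from $AB = 0$ combined with the convention that the first row of $A$ is all ones: namely, $\sum_k b_k = 0$ and $\sum_k (\alpha_k)_i b_k = 0$ for each $i = 1, \dots, n$. Under $\theta \mapsto \theta + 2\pi e_j$ only the term $-\langle \alpha_k - \alpha, \theta\rangle$ in $p_\alpha^k$ changes (the ratio inside $\arg_\pi$ is unaffected since $\alpha_k, \alpha \in \mathbb{Z}^n$), and the shift $-2\pi((\alpha_k)_j - \alpha_j)$, being affine in $\alpha_k$, is killed by $B$ upon using both identities. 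Replacing $f$ by $c z^\beta f$ leaves $p_\alpha^k$ unchanged (the $\beta$'s cancel both in the ratio and in $\alpha_k - \alpha$) while shifting each component of $\Arg_\pi(c)$ by the same scalar; a constant shift is annihilated by $B$ via $\sum_k b_k = 0$.

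For the containment in $\inter(\mathcal{Z}_B)$ I first simplify. Expanding $p_\alpha^k(\theta)$ in $v_\alpha(\theta) = (\Arg_\pi(c) + p_\alpha(\theta))B$ and applying the same two kernel identities to discard the $\arg_\pi(c_\alpha)$ and $\langle \alpha_k - \alpha, \theta\rangle$ contributions yields the clean formula
\[
v_\alpha(\theta) = \sum_{k=1}^N t_k(\theta)\, b_k, \qquad t_k(\theta) := \arg_\pi\!\Big(\frac{c_{\alpha_k} e^{i\langle \alpha_k, \theta\rangle}}{c_\alpha e^{i\langle \alpha, \theta\rangle}}\Big) \in (-\pi, \pi].
\]
It then suffices to rewrite this sum as $\sum_k s_k b_k$ with $|s_k| < \pi/2$.

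To do so I invoke the characterization from the preceding proposition: $\theta \in \overline{\mathcal{LA}'}(f)^c$ supplies $\phi \in \mathbb{R}$ such that every $\psi_k := \arg(c_{\alpha_k}) + \langle \alpha_k, \theta\rangle$ satisfies $\psi_k + \phi \in (-\pi/2, \pi/2) \pmod{2\pi}$; let $\tilde t_k \in (-\pi/2, \pi/2)$ denote this representative. Then $\tilde t_k - \tilde t_\alpha$ lies in $(-\pi, \pi)$ and is congruent to $t_k(\theta)$ modulo $2\pi$, and $t_k(\theta) \neq \pi$ because no two antipodal points can share an open half-space. Hence $t_k(\theta) = \tilde t_k - \tilde t_\alpha$, and
\[
v_\alpha(\theta) = \sum_k (\tilde t_k - \tilde t_\alpha)\, b_k = \sum_k \tilde t_k\, b_k - \tilde t_\alpha \sum_k b_k = \sum_k \tilde t_k\, b_k,
\]
which writes $v_\alpha(\theta) = \sum_k (\pi/2)\mu_k b_k$ with $|\mu_k| < 1$ and thereby places it in $\inter(\mathcal{Z}_B)$.

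The technical heart of the argument is the swap $t_k \leftrightarrow \tilde t_k - \tilde t_\alpha$: it needs both the open half-plane (to place the $\tilde t_k$ inside $(-\pi/2, \pi/2)$) and the strict exclusion $t_k \neq \pi$ (to eliminate the $2\pi$-ambiguity so that the two representatives in $(-\pi,\pi)$ actually coincide). Everything else is bookkeeping with the two kernel identities.
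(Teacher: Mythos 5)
Your proof is correct and follows essentially the same route as the paper: your ``clean formula'' $v_\alpha(\theta)=\sum_k\arg_\pi\bigl(c_{\alpha_k}e^{i\langle\alpha_k,\theta\rangle}/(c_\alpha e^{i\langle\alpha,\theta\rangle})\bigr)\,b_k$ is precisely the identity the paper extracts from $AB=0$, and your rotation by $\phi$ together with the subtraction of $\tilde t_\alpha$ (via $\sum_k b_k=0$) is the paper's normalization to $\alpha=0$ and $H=H_0$ followed by multiplication with $B$. The only imprecision is in the monomial-invariance step: under $f\mapsto cz^\beta f$ it is not literally true that each $p_\alpha^k$ is unchanged and that every entry of $\Arg_\pi(c)$ shifts by the same scalar (both hold only up to $k$-dependent multiples of $2\pi$ coming from the principal branch), but the combined vector $\Arg_\pi(c)+p_\alpha(\theta)$ does shift by a genuinely constant vector, and in any case invariance is immediate from your clean formula, whose entries depend only on the monomial-invariant ratios, so this is a wording issue rather than a gap.
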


\begin{proof}
For any $\theta \in \mathbb{R}^n$ we have that $\Arg_\pi(c)+p_\alpha(\theta)$ equals the vector 
\[\Big(\arg_\pi\Big(\frac{c_{\alpha_1}e^{i\langle\alpha_1,\theta\rangle}}{c_\alpha e^{i\langle\alpha,\theta\rangle}}\Big), \dots, \arg_\pi\Big(\frac{c_{\alpha_N}e^{i\langle\alpha_N,\theta\rangle}}{c_\alpha e^{i\langle\alpha,\theta\rangle}}\Big)\Big)
+ (\arg_\pi(c_\alpha)\langle\alpha, \theta\rangle, \theta_1, \dots, \theta_n)A,
\]
where $A$ denotes the matrix \eqref{eqn:A}. It follows that
\begin{equation}
\label{eqn:EvalutaionOfCoord}
\big(\Arg_\pi(c)+p_\alpha(\theta)\big)B = \Big(\arg_\pi\Big(\frac{c_{\alpha_1}e^{i\langle\alpha_1,\theta\rangle}}{c_\alpha e^{i\langle\alpha,\theta\rangle}}\Big), \dots, \arg_\pi\Big(\frac{c_{\alpha_N}e^{i\langle\alpha_N,\theta\rangle}}{c_\alpha e^{i\langle\alpha,\theta\rangle}}\Big)\Big)B.
\end{equation}
We conclude that $v_\alpha$ is well-defined on $\mathbf{T}^n$, and that it is 
invariant under multiplication of $f$ by a Laurent monomial.

Let us now turn to the last claim. Given a $\theta\in \overline{\mathcal{LA}'}(f)^c$, the components of $f\langle\theta\rangle$ are contained 
in one half-space $H\subset \mathbb{C}$. As $v_\alpha$ is invariant under multiplication of $f$ with a Laurent monomial, we can 
assume that $\alpha = 0$ and that $H=H_0$ is the right half space. That is
\[
 \arg_\pi(c_{\alpha_k}e^{i\langle\alpha_k, \theta\rangle}) = \frac \pi 2 \mu_k
\]
for some $\mu_k\in (-1, 1)$.
Since $\arg_\pi(x_1x_2) = \arg_\pi(x_1)+\arg_\pi(x_2)$ 
for any two elements $x_1, x_2\in H_0$, we find that
\[
 p_0^k(\theta) = \arg_\pi(c_{\alpha_k}e^{i\langle\alpha_k, \theta\rangle})-\arg_\pi(c_{\alpha_k}) - \langle\alpha_k, \theta\rangle.
\]
Thus, the following identities hold
\begin{equation}
\label{eq:MainProofSystem}
 \left\{\begin{array}{rcl}
 \arg_\pi(c_{\alpha_1})+\langle\alpha_1, \theta\rangle +  p_0^1(\theta)& = & \frac\pi2\mu_1\\
  & \vdots&\\
 \arg_\pi(c_{\alpha_N})+\langle\alpha_N, \theta\rangle + p_0^N(\theta)& = & \frac\pi2\mu_N.\\
\end{array}\right.
\end{equation}
Hence,
\[
 \big(\Arg_\pi(c)+p_\alpha(\theta)\big)B = \left(\frac\pi2\mu - (0, \theta_1, \dots, \theta_n)A\right)B = \frac\pi2\mu B   \in \inter(\mathcal{Z}_B).\qedhere
\]
\end{proof}

\begin{theorem}
\label{thm:MapToZonotope}
There is a well-defined map
\[
\cord\colon \ccc(\overline{\mathcal{LA}'}(f)) \rightarrow \inter(\mathcal{Z}_B) \cap (\Arg_\pi(c)B + 2\pi \mathbb{Z}[B]),
\]
which for\/ $\Theta \in  \ccc(\overline{\mathcal{LA}'}(f))$ is given by
\begin{equation}
\label{eqn:theorem43}
\cord(\Theta) = v_\alpha(\theta), \quad \theta\in \Theta, \alpha\in A.
\end{equation}
\end{theorem}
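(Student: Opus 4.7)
The plan is to verify three claims: (i) every value $v_\alpha(\theta)$ belongs to $\inter(\mathcal{Z}_B)\cap(\Arg_\pi(c)B+2\pi\mathbb{Z}[B])$; (ii) $v_\alpha(\theta)$ is constant as $\theta$ varies within a fixed $\Theta\in\ccc(\overline{\mathcal{LA}'}(f))$; and (iii) $v_\alpha(\theta)$ does not depend on the choice of $\alpha\in A$. Claim (i) is essentially immediate from the preceding two lemmas: Lemma~\ref{lemma2} gives the interior zonotope containment, while Lemma~\ref{lemma1} ensures $p_\alpha(\theta)\in 2\pi\mathbb{Z}^N$, so that the identity $v_\alpha(\theta)=\Arg_\pi(c)B+p_\alpha(\theta)B$ places $v_\alpha(\theta)$ in the claimed coset of $2\pi\mathbb{Z}[B]$.

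For claim (ii), I would first establish the inclusion $\mathcal{LH}(f)\subset \overline{\mathcal{LA}'}(f)$: if $\theta\in\mathcal{A}'(g)$ for some binomial $g=c_\alpha z^\alpha+c_\beta z^\beta\in\Bin(f)$, then $c_\alpha e^{i\langle\alpha,\theta\rangle}$ and $c_\beta e^{i\langle\beta,\theta\rangle}$ are antipodal in $\mathbb{C}^*$, so $f\langle\theta\rangle$ is not contained in any open half-space, and the characterization of $\overline{\mathcal{LA}'}(f)^c$ via open half-spaces places $\theta\in\overline{\mathcal{LA}'}(f)$. Consequently every $\Theta\in\ccc(\overline{\mathcal{LA}'}(f))$ is contained in a single cell of the hyperplane arrangement $\mathcal{LH}(f)$. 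Since it was already noted after Lemma~\ref{lemma1} that $p_\alpha$ is constant on each such cell, $v_\alpha=(\Arg_\pi(c)+p_\alpha)B$ is constant on $\Theta$.

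The main obstacle is claim (iii), since the principal argument is not additive under multiplication in general. I would address it via formula~\eqref{eqn:EvalutaionOfCoord}, which with the abbreviation $m_k=c_{\alpha_k}e^{i\langle\alpha_k,\theta\rangle}$ reads
\[
v_\alpha(\theta)=\bigl(\arg_\pi(m_k/m_\alpha)\bigr)_{k=1}^N\,B.
\]
This expression depends on the coefficients only through the ratios $m_k/m_\alpha$, so replacing each $c_{\alpha_k}$ by $\lambda c_{\alpha_k}$ for a fixed $\lambda\in\mathbb{C}^*$ leaves $v_\alpha(\theta)$ unchanged (a special case of the invariance already noted in Lemma~\ref{lemma2}). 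Since $\theta\in\overline{\mathcal{LA}'}(f)^c$, the list $f\langle\theta\rangle$ lies in some open half-space $H$, and by choosing $\lambda$ appropriately I may assume $H=H_0$, so that every $\arg_\pi(m_k)\in(-\pi/2,\pi/2)$. In this range the principal argument is genuinely additive, giving $\arg_\pi(m_k/m_\gamma)=\arg_\pi(m_k)-\arg_\pi(m_\gamma)$ for every $\gamma\in A$. Hence
\[
\arg_\pi(m_k/m_\alpha)-\arg_\pi(m_k/m_\beta)=\arg_\pi(m_\beta)-\arg_\pi(m_\alpha)
\]
is independent of $k$, so $v_\alpha(\theta)-v_\beta(\theta)$ is a scalar multiple of $\mathbf{1}\cdot B$, where $\mathbf{1}=(1,\dots,1)$. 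Since $\mathbf{1}$ is the first row of the matrix $A$ in~\eqref{eqn:A}, the relation $AB=0$ forces $\mathbf{1}B=0$, and therefore $v_\alpha(\theta)=v_\beta(\theta)$, completing the verification.
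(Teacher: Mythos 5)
Your proposal is correct and follows essentially the same route as the paper: containment in the coset and in $\inter(\mathcal{Z}_B)$ from Lemmas \ref{lemma1} and \ref{lemma2}, constancy on components via constancy of $p_\alpha$ on the cells of $\mathcal{LH}(f)$ together with the antipodal-points observation (the paper cites Proposition \ref{pro:LopsidedBoundary}, while you state the equivalent inclusion $\mathcal{LH}(f)\subset\overline{\mathcal{LA}'}(f)$ a bit more explicitly), and independence of $\alpha$ by normalizing to the right half-plane so that the two argument vectors differ by a multiple of $(1,\dots,1)$, which is annihilated by $B$ since $AB=0$.
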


\begin{proof}
Note first that by Lemma \ref{lemma1} we have that $v_\alpha(\theta)\in \Arg_\pi(c) B + 2\pi \mathbb{Z}[B]$, and by Lemma \ref{lemma2} 
we have that $\theta \in \overline{\mathcal{LA}'}(f)^c$ implies that $v_\alpha(\theta) \in \inter(\mathcal{Z}_B)$. Hence we only need to show 
that the right hand side of \eqref{eqn:theorem43} is independent of $\theta \in \Theta$ and $\alpha \in A$, so that the 
given map is well-defined.

The first claim of Lemma \ref{lemma2} says that $v_\alpha$ is well-defined on $\mathbf{T}^n$. As the function $p_\alpha$ is 
constant on the cells of the hyperplane arrangement $\mathcal{H}_f$, Proposition \ref{pro:LopsidedBoundary} tells us that $v_\alpha$ is constant on the connected components 
of the complement of the lopsided coamoeba of $f$. That is, $v_\alpha(\theta)$ is independent of choice of 
$\theta \in \Theta$.

Finally, to see that $v_\alpha(\theta)$ is independent of the choice of $\alpha$, we note again that $v_\alpha(\theta)$ is 
invariant under multiplication of $f$ with a Laurent monomial. Hence we can assume that $f$ contains the monomial 
$\alpha = 0$, and that $H = H_0$. Then
\[
p_0^k(\theta) - p^k_\alpha(\theta) =\arg_\pi(c_\alpha e^{i\langle\alpha,\theta\rangle})-\arg_\pi(c_{\alpha})-\langle\alpha, \theta\rangle
\]
is independent of $k$, and hence $(p_0(\theta) - p_\alpha(\theta))B = 0$, which shows that $v_0(\theta) = v_\alpha(\theta)$ 
for each $\alpha$.
\end{proof}

\begin{definition}
The map $\cord$ from Theorem \ref{thm:MapToZonotope} is called the \emph{order map} of the lopsided coamoeba $\mathcal{LA}'(f)$.
\end{definition}

In order to show the statements on surjectivity and injectivity of $\cord$, we have to use a more detailed notation. After 
multiplication with a Laurent monomial, which neither affects the map $\cord$ nor the lopsided coamoeba $\mathcal{LA}'(f)$, we can 
assume that $A$ is of the form
\[
 A = \left(\begin{array}{ccc}
      1 & 1 & 1 \\
      0 & A_{\mathbf{1}} & A_{\mathbf{2}}
     \end{array}\right),
\]
where $A_{\mathbf{1}}$ is a non singular $n\times n$ matrix. We can also assume that $c_0= 1$, i.e. that the constant 1 is a monomial 
of $f$. 

The columns of any Gale dual of $A$ is a basis for its $\mathbb{Z}$-kernel. Hence, if we fix a Gale dual $\tilde B$, then any 
dual matrix can be presented in the form $B = \tilde B T$, for some $T\in \GL_m(\mathbb{Q})$. This implies that any dual matrix 
$B$ of $A$ can
be presented in the form
\begin{equation}
\label{eqn:ChoiceOfB}
B = \left(\begin{array}{c}
a_0\\
-A_{\mathbf{1}}^{-1}A_{\mathbf{2}}\\
I_m
\end{array}\right)T,
\end{equation}
where $a_0\in \mathbb{Q}^m$ is defined by the property that each column of $B$ should sum to zero, and $T\in \GL_m(\mathbb{Q})$.

\begin{lemma}
\label{lem:ThetaSystem}
Let $A$ be under the assumptions imposed above.
Let $c_{\mathbf{1}}$ and $c_{\mathbf{2}}$ denote the vectors $(c_1, \dots, c_{n})$ and $(c_{n+1}, \dots, c_{n+m})$ respectively, 
and similarly for $l\in \mathbb{Z}^N$ and 
$\mu\in \mathbb{R}^N$. Consider the system
\begin{equation}
\label{eq:SurjNInj1}
\left\{\begin{array}{lcl}
 \Arg_\pi(c_{\mathbf{1}})+\theta A_{\mathbf{1}} +  2\pi l_{\mathbf{1}}& = & \displaystyle\frac\pi2 \,\mu_{\mathbf{1}}  \vspace{5pt}\\
 \Arg_\pi(c_{\mathbf{2}})+\theta A_{\mathbf{2}} +  2\pi l_{\mathbf{2}}& = & \displaystyle\frac\pi2 \,\mu_{\mathbf{2}}
\end{array}\right.
\end{equation}
Then $\theta\in \overline{\mathcal{LA}'}(f)^c$ if and only if $\theta$ solves (\ref{eq:SurjNInj1}) for some integers $l$ and some 
numbers $\mu_0, \dots, \mu_{n+m}$ such that $\mu_0, \mu_1+\mu_0, \dots, \mu_{n+m}+\mu_0 \in (-1,1)$
\end{lemma}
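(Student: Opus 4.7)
The plan is to leverage the earlier characterization of $\overline{\mathcal{LA}'}(f)^c$ in terms of open half-spaces: $\theta\in\overline{\mathcal{LA}'}(f)^c$ if and only if there exists an open half-space $H\subset\mathbb{C}$ containing $f\langle\theta\rangle$. Each such half-space has the form $H_\phi = \{z:\Re(e^{i\phi}z)>0\}$ for some $\phi$, and the condition $e^{i\psi}\in H_\phi$ is equivalent to $\phi+\psi\in(-\pi/2,\pi/2)\pmod{2\pi}$. The task then becomes to translate this geometric condition into the arithmetic system (\ref{eq:SurjNInj1}) by reading the $\mu_k$'s as twice-normalized arguments of the individual monomials, the $l_k$'s as the integers choosing a representative modulo $2\pi$, and $\mu_0$ as twice-normalizing the angle $\phi$ that parametrizes the half-space.

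First, since the normalization places the constant monomial ($c_0 = 1$, $\alpha_0 = 0$) among the monomials of $f$, the element $1\in S^1$ appears in $f\langle\theta\rangle$, so $H_\phi$ must contain $1$. This is equivalent to $\phi\in(-\pi/2,\pi/2)\pmod{2\pi}$, and one may choose the representative $\phi=\tfrac{\pi}{2}\mu_0$ with $\mu_0\in(-1,1)$. Next, for each remaining monomial indexed by $k\in\mathbf{1}\cup\mathbf{2}$, writing $\psi_k = \arg_\pi(c_{\alpha_k})+\langle\alpha_k,\theta\rangle$, the containment $e^{i\psi_k}\in H_\phi$ becomes the existence of $l_k\in\mathbb{Z}$ with
\[
\psi_k+2\pi l_k\in\Big(-\tfrac{\pi}{2}(1+\mu_0),\,\tfrac{\pi}{2}(1-\mu_0)\Big).
\]
Setting $\tfrac{\pi}{2}\mu_k := \psi_k+2\pi l_k$ converts this into the constraint $\mu_k+\mu_0\in(-1,1)$, while the defining equation is exactly one row of (\ref{eq:SurjNInj1}). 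Collecting the rows for $k\in\mathbf{1}$ and $k\in\mathbf{2}$ produces the full system. Conversely, given a solution of (\ref{eq:SurjNInj1}) satisfying the stated inequalities, setting $\phi=\tfrac{\pi}{2}\mu_0$ recovers a half-space containing $f\langle\theta\rangle$ by reversing the above steps.

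The whole argument is essentially a bookkeeping exercise once the half-space characterization from the preceding section is in hand; no substantial new obstacle arises. The one delicate point is recognizing that $\mu_0$ is not the value of any $\mu_k$ appearing explicitly in the system, but a free parameter arising from the choice of half-space, whose constrained range $(-1,1)$ is forced precisely by the normalization $c_0 = 1$.
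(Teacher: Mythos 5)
Your proof is correct and follows essentially the same route as the paper's: both rely on the open half-space characterization of $\overline{\mathcal{LA}'}(f)^c$, use the normalization $c_0=1$ to pin the half-space angle to $\phi=\tfrac{\pi}{2}\mu_0$ with $\mu_0\in(-1,1)$, and translate membership of each $e^{i\psi_k}$ in $H_\phi$ into the rows of \eqref{eq:SurjNInj1} together with the constraints $\mu_k+\mu_0\in(-1,1)$. The paper performs the same bookkeeping by rotating $f$ by a unimodular constant to reduce to $H_0$, which differs from your direct parametrization of $H_\phi$ only in a sign convention for $\mu_0$.
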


\begin{proof}
If $\theta\in \overline{\mathcal{LA}'}(f)^c$, then there is a halfplane $H_\phi$ such that $f\langle\theta\rangle\subset H_\phi$. As the 
constant $1$ is a term of $f$, we can choose $\phi \in (-\pi/2, \pi/2)$. Considering the polynomial 
$e^{-i\phi}f(z)$, we find that this is lopsided at $\theta$ for $H_0$. Thus, there are numbers 
$\lambda_1, \dots, \lambda_{n+m} \in (-1, 1)$ and integers $l_1, \dots, l_{n+m}$ such that 
\[
\arg_\pi(c_k) + \langle\theta, \alpha_k\rangle + 2\pi l_k = \displaystyle\frac\pi 2 \,\lambda_k+\phi,\quad k=1, \dots, n+m.
\]
This shows that $\theta$ fulfils (\ref{eq:SurjNInj1}) with $l$ as above, $\mu_0 = -2\phi/\pi$ and $\mu_k = \lambda_k+2\phi/\pi$ for $k=1, \dots, n+m$. 
Conversely, if $\theta$ fulfils (\ref{eq:SurjNInj1}) for such $l$ and $\mu$, then $f\langle\theta\rangle\subset H_\phi$, where $\phi=-\pi\mu_0/2$.
\end{proof}

\begin{proposition}
\label{thm:Surjectivity}
The order map $\cord$ is a surjection. 
\end{proposition}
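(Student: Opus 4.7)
The plan is to construct, for each $p \in \inter(\mathcal{Z}_B) \cap (\Arg_\pi(c)B + 2\pi \mathbb{Z}[B])$, a point $\theta \in \overline{\mathcal{LA}'}(f)^c$ whose image under $\cord$ equals $p$. The strategy is to extract $\theta$ from two simultaneous representations of $p$ and then invoke Lemma \ref{lem:ThetaSystem}. As a preliminary I replace $f$ by a Laurent monomial multiple -- an operation affecting neither $\cord$ nor $\overline{\mathcal{LA}'}(f)$ -- so that $\alpha_0 = 0$ and $c_0 = 1$, matching the setup of that lemma.

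Since $p \in \inter(\mathcal{Z}_B)$ and $B$ has full column rank $m$, the linear surjection $\mu \mapsto \tfrac{\pi}{2}\mu B$ is an open map and the interior of $\mathcal{Z}_B$ coincides with the image of the open cube $(-1,1)^N$; thus I may write $p = \tfrac{\pi}{2}\mu B$ for some $\mu \in (-1,1)^N$. From $p \in \Arg_\pi(c)B + 2\pi\mathbb{Z}[B]$ I may simultaneously write $p = \Arg_\pi(c)B + 2\pi\ell B$ with $\ell \in \mathbb{Z}^N$. Subtracting, the row vector $\tfrac{\pi}{2}\mu - \Arg_\pi(c) - 2\pi\ell$ is annihilated by $B$. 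The row space of $A$ is contained in the left null space of $B$ since $AB=0$, and a dimension count (both equal $N - m = n + 1$) shows these coincide. Hence $\tfrac{\pi}{2}\mu - \Arg_\pi(c) - 2\pi\ell = (\xi_0, \theta)A$ for some $\xi_0 \in \mathbb{R}$ and $\theta \in \mathbb{R}^n$. Reading off the $k = 0$ coordinate fixes $\xi_0 = \tfrac{\pi}{2}\mu_0 - 2\pi\ell_0$, and substituting back yields, for $k = 1, \dots, n+m$,
\[
\arg_\pi(c_k) + \langle\theta, \alpha_k\rangle + 2\pi(\ell_k - \ell_0) = \tfrac{\pi}{2}(\mu_k - \mu_0).
\]

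I then apply Lemma \ref{lem:ThetaSystem} with $l_k := \ell_k - \ell_0$, $\tilde\mu_k := \mu_k - \mu_0$ for $k \geq 1$, and half-plane parameter $\tilde\mu_0 := \mu_0$. The required bounds reduce to $|\mu_0| < 1$ and $|\mu_k| < 1$, both of which hold by construction, so $\theta \in \overline{\mathcal{LA}'}(f)^c$. To verify $\cord(\Theta) = p$ for the component $\Theta \ni \theta$, I observe that $|\mu_k - \mu_0| < 2$ places $\tfrac{\pi}{2}(\mu_k - \mu_0)$ inside $(-\pi, \pi]$, so the displayed equation identifies $\arg_\pi(c_k e^{i\langle\theta, \alpha_k\rangle})$. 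The relation $AB = 0$ supplies both $\sum_{k} b_k = 0$ (its first row) and $\sum_k \langle\theta, \alpha_k\rangle b_k = 0$ (its remaining rows), and a short calculation then gives
\[
v_0(\theta) = \sum_{k=1}^{n+m} \tfrac{\pi}{2}(\mu_k - \mu_0)\,b_k = \sum_{k=0}^{n+m} \tfrac{\pi}{2}\mu_k\,b_k = \tfrac{\pi}{2}\mu B = p.
\]

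The main obstacle I anticipate is justifying the strict representation $\mu \in (-1,1)^N$ from $p \in \inter(\mathcal{Z}_B)$. Strictness is exactly what converts into the strict lopsidedness bounds of Lemma \ref{lem:ThetaSystem}: a boundary representation would only place $\theta$ in $\overline{\mathcal{LA}'}(f)$ rather than its complement. Granting this standard zonotope fact (a consequence of surjectivity of the linear map $\mu \mapsto \mu B$), the rest of the argument is linear algebra combined with the bookkeeping needed to match Lemma \ref{lem:ThetaSystem}.
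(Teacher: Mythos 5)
Your proof is correct and follows essentially the same route as the paper: represent the target lattice point as $\tfrac{\pi}{2}\mu B$ with $\mu$ in the open cube (the same zonotope-interior fact the paper uses implicitly, valid since the image of the open cube under the surjective linear map is an open convex set whose closure is $\mathcal{Z}_B$), recover $\theta$ from the resulting linear system, and conclude via Lemma \ref{lem:ThetaSystem}. The only differences are cosmetic: you extract $\theta$ from the identity (left kernel of $B$) $=$ (row space of $A$) instead of the paper's explicit elimination with $A_{\mathbf{1}}^{-1}$ and the normal form \eqref{eqn:ChoiceOfB} of $B$, and you verify $v_0(\theta)=p$ by direct computation where the paper just ``traces backwards.''
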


\begin{proof}
Let $A$ be under the assumptions imposed above.
Formally solving the first equation of (\ref{eq:SurjNInj1}) for $\theta$ by multiplication with $A_{\mathbf{1}}^{-1}$ and eliminating 
$\theta$ in the second equation, also applying the transformation $T$, one arrives at the equivalent system
 \begin{equation}
 \label{eq:SurjNInj2}
\left\{\begin{array}{rll}
 \theta & = &  \displaystyle\frac\pi 2\, \mu_{\mathbf{1}} A_{\mathbf{1}}^{-1} - \Arg(c_{\mathbf{1}})A_{\mathbf{1}}^{-1} - 2\pi l_{\mathbf{1}} A_{\mathbf{1}}^{-1}\vspace{5pt}\\
 \Arg_\pi(c)B + 2\pi (0, l_{\mathbf{1}}, l_{\mathbf{2}})B & = &  \displaystyle\frac \pi 2 \,(0,\mu_{\mathbf{1}}, \mu_{\mathbf{2}}) B.
\end{array}\right.
 \end{equation}
To see that $\cord$ is surjective, consider a point $\Arg_\pi(c) B + 2\pi lB  = \pi\lambda B/2 \in \inter(\mathcal{Z}_B)$, and 
note that we can assume that $l_0 = 0$. Define $\mu$ by $\mu_k = \lambda_k-\lambda_0$ for $k=0, \dots, n+m$. 
It follows that the pair $(l, \mu)$ fulfils the second equation of  (\ref{eq:SurjNInj2}). Let $\theta\in \mathbb{R}^n$ be defined 
by the first equation of ($\ref{eq:SurjNInj2}$), it then follows that the triple $(\theta, l,\mu$) fulfils 
($\ref{eq:SurjNInj1}$), and thus by Lemma \ref{lem:ThetaSystem} we have that $\theta\in \overline{\mathcal{LA}'}(f)^c$.
By tracing backwards we find that the order of the component of $\overline{\mathcal{LA}'}(f)^c$ containing $\theta$ is 
$\Arg_\pi(c) B + 2\pi lB$, and hence the map $\cord$ is surjective.
\end{proof}

\begin{proposition}
\label{thm:Injectivity}
If $g_A=1$, i.e.\ if the maximal minors of $A$ are relatively prime, then $\cord$ is an injection.
\end{proposition}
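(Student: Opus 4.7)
The plan is to show that if $\Theta,\Theta' \in \ccc(\overline{\mathcal{LA}'}(f))$ satisfy $\cord(\Theta) = \cord(\Theta')$, then $\Theta = \Theta'$. After multiplying $f$ by a Laurent monomial, which leaves both sides unchanged, I may assume $\alpha_0 = 0$ and $c_0 = 1$, and take $B$ to be a Gale dual. Because $g_A = 1$, the sequence $0 \to \mathbb{Z}^{n+1} \xrightarrow{\cdot A} \mathbb{Z}^N \xrightarrow{\cdot B} \mathbb{Z}^m \to 0$ is exact, so the $\mathbb{Z}$-left kernel of $B$ is precisely the integer row span of $A$. Picking $\theta \in \Theta$ and $\theta' \in \Theta'$, the equality of orders gives $(p_0(\theta) - p_0(\theta'))B = 0$, whence $p_0(\theta) - p_0(\theta') = 2\pi sA$ for some $s \in \mathbb{Z}^{n+1}$; since $p_0^0 \equiv 0$, the $0$-th coordinate forces $s_0 = 0$, so $s = (0,s_{1:n})$.

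Set $\tilde\theta := \theta' - 2\pi s_{1:n}$; this represents the same class as $\theta'$ on $\mathbf{T}^n$, and the shift rule $p_0(\theta + 2\pi\nu) = p_0(\theta) - 2\pi(0,\nu)A$ yields $p_0(\tilde\theta) = p_0(\theta)$. I next upgrade this to $p_\alpha(\theta) = p_\alpha(\tilde\theta)$ for every $\alpha \in A$. Fix $\phi$ with $f\langle\theta\rangle \subset H_\phi$; since $1 \in f\langle\theta\rangle$, necessarily $\phi \in (-\pi/2,\pi/2)$, and thus each $F_k(\theta) := \arg_\pi(c_k e^{i\langle\alpha_k,\theta\rangle})$ lies in $(\phi - \pi/2, \phi + \pi/2) \subset (-\pi,\pi)$. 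In particular $F_k(\theta) - F_\alpha(\theta) \in (-\pi,\pi)$, the regime on which $\arg_\pi$ carries no $2\pi$ correction, and Lemma~\ref{lemma1} collapses to $p_\alpha^k(\theta) = p_0^k(\theta) - p_0^\alpha(\theta)$. The same simplification at $\tilde\theta$, combined with $p_0(\theta) = p_0(\tilde\theta)$, gives $p_\alpha(\theta) = p_\alpha(\tilde\theta)$.

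By Lemma~\ref{lemma1}, $p_\alpha^k$ is locally constant off the coamoeba of $c_\alpha z^\alpha + c_{\alpha_k}z^{\alpha_k}$, a family of parallel hyperplanes across each of which $p_\alpha^k$ jumps by $\pm 2\pi$ with a sign determined by the crossing direction. All crossings of such a family by a straight segment therefore contribute to $p_\alpha^k$ in a consistent direction, and $p_\alpha^k(\theta) = p_\alpha^k(\tilde\theta)$ forces $[\theta,\tilde\theta]$ to meet none of the hyperplanes in the family. Ranging over all $\alpha \in A$ and all $k$, this yields $[\theta,\tilde\theta] \cap \mathcal{LH}(f) = \emptyset$, and a fortiori $[\theta,\tilde\theta] \cap \mathcal{H}(f) = \emptyset$.

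The endpoints of $[\theta,\tilde\theta]$ lie in $\overline{\mathcal{A}'}(f)^c$ by Corollary~\ref{cor:Inclusion}. If the segment met $\overline{\mathcal{A}'}(f)$, Lemma~\ref{lem:EdgeSubdivision} would produce an edge $\Gamma \subset \Delta_f$ with $[\theta,\tilde\theta] \cap \mathcal{A}'(f_\Gamma) \neq \emptyset$, contradicting the previous step. Hence $[\theta,\tilde\theta] \subset \overline{\mathcal{A}'}(f)^c$, and by the Bochner convexity of the components of this complement, $\theta$ and $\tilde\theta$ lie in a common component of $\overline{\mathcal{A}'}(f)^c$; Proposition~\ref{pro:ComplementInjection} then places them in a common component of $\overline{\mathcal{LA}'}(f)^c$, yielding $\Theta = \Theta'$ on $\mathbf{T}^n$. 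The key obstacle is the upgrade from $p_0$ to all $p_\alpha$: this hinges on the geometric observation that at a point in the lopsided complement all phases $F_k$ fit inside a common open arc of length $\pi$, eliminating the branch-cut ambiguity in $\arg_\pi$ and opening the door to the edge subdivision lemma.
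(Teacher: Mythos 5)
Your argument is sound up to and including the conclusion that $[\theta,\tilde\theta]\cap\mathcal{LH}(f)=\emptyset$: the reduction modulo the row lattice of $A$ (valid precisely because $g_A=1$ makes that lattice saturated in $\mathbb{Z}^N$, so an integer vector killed by $B$ is an integer combination of the rows of $A$), the identity $p_\alpha^k=p_0^k-p_0^\alpha$ at points of $\overline{\mathcal{LA}'}(f)^c$ when the constant $1$ is a monomial of $f$, and the monotone-crossing count along a straight segment are all correct. The flaw is the step ``and a fortiori $[\theta,\tilde\theta]\cap\mathcal{H}(f)=\emptyset$'', which silently uses the inclusion $\mathcal{H}(f)\subset\mathcal{LH}(f)$. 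That inclusion is false in general: an edge $\Gamma$ of $\Delta_f$ may contain points of $A$ besides its two vertices, and then $\mathcal{A}'(f_\Gamma)$ need not be covered by binomial coamoebas of $f$. For instance, if $f$ contains the monomials $1, z_1, z_1^2$ along an edge (with unit coefficients), then $\mathcal{A}'(f_\Gamma)$ is the hyperplane family $\theta_1=\pm 2\pi/3$, whereas the binomials built from these monomials only give $\theta_1=\pi$ and $\theta_1=\pm\pi/2$. Consequently the appeal to Lemma \ref{lem:EdgeSubdivision}, and with it the chain through Bochner convexity and Proposition \ref{pro:ComplementInjection}, is not justified as written.

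The detour through the ordinary coamoeba is in fact unnecessary, and the gap closes with a result you did not invoke: by Proposition \ref{pro:LopsidedBoundary}, the boundary of $\overline{\mathcal{LA}'}(f)$ is contained in $\mathcal{LH}(f)$. Since $[\theta,\tilde\theta]$ avoids $\mathcal{LH}(f)$, it avoids this boundary; being connected with endpoints in the open set $\overline{\mathcal{LA}'}(f)^c$, the whole segment lies in $\overline{\mathcal{LA}'}(f)^c$, so $\theta$ and $\tilde\theta$ lie in one component of the complement in $\mathbb{R}^n$, hence $\theta$ and $\theta'$ lie in one component on $\mathbf{T}^n$ and $\Theta=\Theta'$. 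With this repair your proof is correct and genuinely different from the paper's: there, one fixes the integer vector $l$ in the system \eqref{eq:SurjNInj1}, notes that the admissible $\mu$ form a convex set so that the corresponding $\theta$ constitute a single convex component of $\overline{\mathcal{LA}'}(f)^c$ in $\mathbb{R}^n$, and then uses $g_A=1$ exactly as you do to identify the two lifts on $\mathbf{T}^n$; your hyperplane-crossing argument replaces that convexity step.
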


\begin{proof}
For any point $p\in \inter(\mathcal{Z}_B)$, the set of all $\mu \in \mathbb{R}^N$ such that $2\pi \mu B = p$, is an affine space, hence 
convex. It follows that the set of all $\mu\in (-1, 1)^N$ such that $2\pi\mu B=p$, being the intersection of two convex 
sets, is also convex. This implies that for fix integers $l$, the set of $\theta\in \mathbb{R}^n$ such that (\ref{eq:SurjNInj1}) is 
fulfilled with $\mu_0, \mu_1-\mu_0, \dots, \mu_N-\mu_0\in (-1,1)$ is in turn also convex, as it is the image of a convex set 
under an affine transformation. 
As the right hand side of (\ref{eqn:SecondLemma}) is constant on each cell of $ \mathcal{LH}(f)$, this set is exactly one connected 
component of $\overline{\mathcal{LA}'}(f)^c$ in $\mathbb{R}^n$. 
Thus, if we consider two points $\theta$ and $\tilde \theta$ in $\mathbb{R}^n$ which both maps to $\Arg(c)B + 2\pi lB$, then we can 
assume that $\theta$ and $\tilde \theta$ fulfils (\ref{eq:SurjNInj1}) for the same numbers $\mu$, however possibly for 
different integers $l$. Under this assumption there are integers $s_1,\dots, s_N$ such that
\[
\langle\alpha_k, \theta \rangle = \langle\alpha_k, \tilde \theta\rangle + 2\pi s_k, \quad k = 1, \dots, N.
\]
The sublattice of $\mathbb{Z}^{n+1}$ generated by the columns of $A$ has $n+1$ generators, and its index is given by the absolute 
value of their determinant. As the determinant is multilinear, this is a linear combination of the determinants of the 
maximal minors of $A$. It follows that the assumption that $g_A=1$ is equivalent to that the columns of $A$ span 
$\mathbb{Z}^{n+1}$ over $\mathbb{Z}$. Thus, for each vector $e_i$ there are integers $r_i= (r_{i1}, \dots, r_{iN})$ such that 
$e_i = \sum_k r_{ik} \alpha_k$. Hence,
\[
\theta_i = \langle e_i, \theta\rangle = \sum_{k=1}^N r_{ik}\langle\alpha_k, \theta\rangle = \sum_{k=1}^N r_{ik}\langle\alpha_k, \tilde\theta\rangle + 2\pi r_{ik} s_k = \tilde\theta_i + 2\pi\langle r_i,s\rangle,
\]
which shows that $\theta$ and $\tilde \theta$ correspond to the same point in $\mathbf{T}^n$.
\end{proof}

\begin{remark}
In general, the map $\cord$ will be $g_A$ to one. Thus, if one considers $\cord$ as a map from $\ccc(\overline{\mathcal{LA}'}(f))$ into 
the full translated lattice $\inter(\mathcal{Z}_B)\cap (\Arg_\pi(c)B + 2\pi \mathbb{Z}^m)$, then injectivity is measured in terms of $g_A$,
while surjectivity is measured in terms of $g_B$. 
In view of Corollary \ref{cor:CoAmoebaTransformation}, if one is interested in the structure of the set of connected 
components of the complement of the closed coamoeba, it is natural to assume that $\cord$ is a bijection.
\end{remark}

\begin{remark}
 The order of a component $\Theta$ of the complement of $\overline{\mathcal{LA}'}(f)$ is most easily determined using the righ hand side of
 \eqref{eqn:EvalutaionOfCoord}. In particular, if the constant $1$ is a monomial of $f$, then
 \[
  \cord(\Theta) = v_0(\theta) = (\arg_\pi(c_{\alpha_1}e^{i\langle\alpha_1, \theta\rangle}), \dots, \arg_\pi(c_{\alpha_N}e^{i\langle\alpha_N, \theta\rangle}))\,B, \quad \theta\in \Theta.
 \]
\end{remark}

\begin{example}
Let us determine the map $\cord$ explicitly in the first example shown in Figure \ref{fig:LopsidedExample}, that is we 
consider the polynomial $f(z_1,z_2) = z_1^3 + z_2 + z_2^2 - z_1 z_2$. The point configuration is
\[
 A = \left(\begin{array}{cccc}
            1 & 1 & 1 & 1 \\
            3 & 0 & 0 & 1 \\
            0 & 1 & 2 & 1
           \end{array}\right),
\]
and a Gale dual of $A$ is given by
\[
 B = (-1,-1,-1,3)^t.
\]
The corresponding zonotope is the interval $\mathcal{Z}_B = [-3\pi, 3\pi]$. As the translation $\Arg_\pi(c)B = 3\arg_\pi(-1) = 3\pi$,
the image of the map $\cord$ will be the doubleton $\{-\pi, \pi\}$. 
To determine $\cord$, it is enough to evaluate $v_\alpha$ for some $\alpha$ and one point in each of the two connected 
components of $\overline{\mathcal{LA}'}(f)^c$, and we see from the picture in Figure \ref{fig:LopsidedExample} that a natural choice 
of points is $\theta_1 = (-2\pi/3,0)$ and $\theta_2 = (2\pi/3, 0)$. We find that
\[
\begin{array}{rcrcr}
v_{\alpha_1}(\theta_1) & = & (0,-2\pi,-2\pi,-\pi)B & = &\pi\phantom{.}\\
v_{\alpha_1}(\theta_2) & = & (0, 2\pi, 2\pi, \pi) B & = &-\pi.
\end{array}
\]
\end{example}

\begin{example}
Let us also consider a univariate case of codimension $1$, namely
\[
f(z) = 1 + z^3 + i z^5.
\]
A Gale dual of $A$ is given by $B = (2, -5, 3)^t$, hence the zonotope is the interval $\mathcal{Z}_B = [-5\pi, 5\pi]$. As the 
translation term is $(0,0,\pi/2)B = 3\pi/2$, the image of $\cord$ is $\{-9\pi/2, -5\pi/2,-\pi/2,3\pi/2, 7\pi/2\}$. 
The lopsided coamoeba $\mathcal{LA}'(f)$ can be seen in Figure \ref{fig:onedimex}. 
\begin{figure}[h]
\centering
\includegraphics[width=60mm]{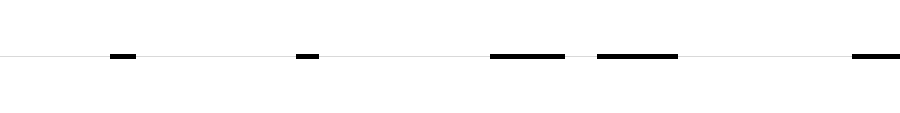}
\caption{$\mathcal{LA}'(f)$ in the fundamental domain $[-\pi, \pi]$.}
\label{fig:onedimex}
\end{figure}
We choose one point from each connected component, namely 
\[
\theta_1= -\frac{7 \pi}8, \quad \theta_2 =  -\frac\pi2,\quad  \theta_3 = 0,\quad \theta_4=  \frac{5 \pi}{16}, \quad \theta_5= \frac{3 \pi}4,
\]
and find that
\[
\begin{array}{rcrcr}
\vspace{2pt} v_0(\theta_1) & = &  (0, -5\pi/8, \pi/8) B & = &7 \pi/2\phantom{.} \\
\vspace{2pt} v_0(\theta_2) & = &  (0, \pi/2, 0)B & = &-5\pi/2\phantom{.} \\
\vspace{2pt} v_0(\theta_3) & = &  (0, 0, \pi/2)B & = &3\pi/2\phantom{.} \\
\vspace{2pt} v_0(\theta_4) & = &  (0, 15\pi/16, \pi/16)B & = &-9\pi/2\phantom{.} \\
\vspace{2pt} v_0(\theta_5) & = &  (0, \pi/4, \pi/4)B & = & -\pi/2.
\end{array}
\]
It is notable that the orders do not reflect the positions of the connected components of the complement on $\mathbf{T}$.
\end{example}

Let us make a short sidestep and consider the non-closed lopsided coamoeba, $\mathcal{LA}'(f)$. The map $\cord$ extends to a map on 
$\ccc(\mathcal{LA}'(f))$ if one allows the image to contain points on the boundary of $\mathcal{Z}_B$. However, the vertices of $\mathcal{Z}_B$ will 
not lie in the image of this map.

\begin{theorem}
\label{thm:OpenMapToZonotope}
Let $f$ be a Laurent polynomial, and let $B$ be a dual matrix of $A$. Then the map $\cord$ can be extended to a surjective 
map
\[
\cord\colon \ccc(\mathcal{LA}'(f)) \rightarrow (\mathcal{Z}_{B}\setminus \ver(\mathcal{Z}_B))\cap (\Arg(c)B+ 2\pi\mathbb{Z}[B]),
\]
where $\ver(\mathcal{Z}_B)$ denotes the set of vertices of $\mathcal{Z}_B$. If $g_A=1$ then this map is an injection.
\end{theorem}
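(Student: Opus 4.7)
My plan is to repeat the proof scheme of Theorem~\ref{thm:MapToZonotope}, Proposition~\ref{thm:Surjectivity}, and Proposition~\ref{thm:Injectivity}, with the strict inequalities in Lemma~\ref{lem:ThetaSystem} relaxed to weak inequalities. The relaxation reflects that $\theta \in \mathcal{LA}'(f)^c$ only guarantees $f\langle\theta\rangle \subset \overline{H_\phi}$; at least one strict inequality is forced precisely because $f\langle\theta\rangle \not\subset \partial H_\phi$. This surviving strict inequality is exactly what will prevent the image from lying at a vertex of $\mathcal{Z}_B$.

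\textbf{Extension and image.} Normalising so that $c_0 = 1$ and $\alpha_0 = 0$, the extended analogue of Lemma~\ref{lem:ThetaSystem} reads: $\theta \in \mathcal{LA}'(f)^c$ if and only if \eqref{eq:SurjNInj1} has a solution with integer $l$ and reals $\mu_0, \ldots, \mu_{n+m}$ satisfying $\mu_0, \mu_1+\mu_0, \ldots, \mu_{n+m}+\mu_0 \in [-1,1]$, with at least one of these in $(-1,1)$. The computation in Lemma~\ref{lemma2} then gives $v_\alpha(\theta) = \frac\pi 2 \mu B \in \mathcal{Z}_B$. To see that $v_\alpha(\theta) \notin \ver(\mathcal{Z}_B)$, I would invoke the general fact that a point $v \in \mathcal{Z}_B$ is a vertex if and only if every representation $v = \frac\pi 2 \mu B$ with $\mu \in [-1,1]^N$ has $|\mu_k| = 1$ for every $k$ with $b_k \neq 0$; this follows by perturbing any offending $\mu$ by $\pm\varepsilon e_k$ and averaging. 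After the harmless reduction to the case where no monomial of $f$ is redundant (so each $b_k \neq 0$), the strict inequality supplied by lopsidedness immediately yields $v_\alpha(\theta) \notin \ver(\mathcal{Z}_B)$.

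\textbf{Well-definedness, surjectivity, injectivity.} For well-definedness on components of $\mathcal{LA}'(f)^c$, the key geometric input is that a boundary point $\theta_0 \in \partial \overline{\mathcal{LA}'}(f) \cap \mathcal{LA}'(f)^c$ lies on some binomial coamoeba in $\mathcal{LH}(f)$, with the corresponding antipodal pair forced onto $\partial H_\phi$ for the dominating half-space. A transverse perturbation of $\theta_0$ sends one antipode into $H_\phi$ and the other into $-H_\phi$; for the perturbed point to remain lopsided, one can rotate $\phi$ in only one of the two directions, so only one of the adjacent open cells lies in $\mathcal{LA}'(f)^c$. Hence each component of $\mathcal{LA}'(f)^c$ is a component of $\overline{\mathcal{LA}'}(f)^c$ together with a relatively open subset of its topological boundary, and $v_\alpha$ extends from the open part as a locally constant function without straddling $\mathcal{LH}(f)$. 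Surjectivity: a target point $p \in (\mathcal{Z}_B \setminus \ver(\mathcal{Z}_B)) \cap (\Arg_\pi(c) B + 2\pi \mathbb{Z}[B])$ admits, by the vertex characterisation, a representation $p = \frac\pi 2 \lambda B$ with some $|\lambda_k| < 1$; one then recovers $\theta$ from \eqref{eq:SurjNInj2} exactly as in the proof of Proposition~\ref{thm:Surjectivity} and verifies $\theta \in \mathcal{LA}'(f)^c$ via the extended Lemma~\ref{lem:ThetaSystem}. Injectivity when $g_A = 1$: the convexity argument in Proposition~\ref{thm:Injectivity} carries over verbatim, since the set of $\theta$ satisfying the extended \eqref{eq:SurjNInj1} with a fixed $\mu$ is still the image of a convex set under an affine map.

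\textbf{Main obstacle.} The most delicate step is the well-definedness on components of $\mathcal{LA}'(f)^c$: one must rule out that a single component crosses a binomial coamoeba and so picks up a nontrivial jump $\pm 2\pi b_k$ in $v_\alpha$. The one-sided nature of lopsidedness at such a boundary, encoded in the one-directional rotation of $H_\phi$ above, is precisely what makes this work, and it is the genuinely new ingredient beyond the closed case already treated in Theorem~\ref{thm:MapToZonotope}.
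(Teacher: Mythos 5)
Your overall strategy is the paper's: relax the inequalities of Lemma~\ref{lem:ThetaSystem} to closed ones, keep the one strict inequality supplied by lopsidedness, and rerun Theorem~\ref{thm:MapToZonotope} and Propositions~\ref{thm:Surjectivity} and~\ref{thm:Injectivity}, using the vertex criterion for $\mathcal{Z}_B$. But two of your steps do not hold up. First, the structural claim that ``each component of $\mathcal{LA}'(f)^c$ is a component of $\overline{\mathcal{LA}'}(f)^c$ together with a relatively open subset of its topological boundary'' is false, and your well-definedness argument rests on it. There are components of $\mathcal{LA}'(f)^c$ consisting entirely of points of $\partial\overline{\mathcal{LA}'}(f)$: any component whose order lies on $\partial\mathcal{Z}_B\setminus\ver(\mathcal{Z}_B)$ must be of this kind, since components of $\overline{\mathcal{LA}'}(f)^c$ have orders in $\inter(\mathcal{Z}_B)$, and such boundary lattice points are exactly why the target is enlarged from $\inter(\mathcal{Z}_B)$ to $\mathcal{Z}_B\setminus\ver(\mathcal{Z}_B)$ (in the codimension-two example following the theorem the target has six elements while the closed complement has at most five components). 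Your own surjectivity claim has to hit these points, so your description of the components contradicts it. The one-sided rotation analysis you give is valid only at wall points carrying a single antipodal pair with all other monomials strictly interior; at more degenerate lopsided wall points both adjacent cells lie in $\mathcal{LA}'(f)$, and that is precisely how boundary-only components arise. What ``the same steps'' amounts to is working directly with the relaxed system \eqref{eq:SurjNInj1}--\eqref{eq:SurjNInj2}: for fixed integers $l$ the solution set in $\theta$ is convex, these sets are the components of $\mathcal{LA}'(f)^c$, and the order is read off from $l$; no decomposition into ``closed component plus boundary'' is needed. (Relatedly, the literal formula $v_\alpha$ with $\arg_\pi$ is not constant on such components --- at an antipodal point the principal branch chooses a side --- so the extension should be defined through the system or by limits rather than by evaluating \eqref{eqn:EvalutaionOfCoord} at boundary points.)

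Second, the ``harmless reduction to the case where no monomial of $f$ is redundant (so each $b_k \neq 0$)'' is not a reduction: deleting a monomial with $b_k = 0$ changes the lopsidedness condition and hence $\mathcal{LA}'(f)$ itself. Worse, the zero-row case cannot be repaired, because the statement genuinely fails there. Take $f = 1 + z_1 + z_1^2 + z_2$, so that $A$ is a pyramid with apex $(0,1)$, $B = (1,-2,1,0)^t$, $\mathcal{Z}_B = [-2\pi,2\pi]$, $g_A = 1$, and the target set is the single point $\{0\}$. The segments $\{\theta_1 = \pi,\ \theta_2 \in (0,\pi)\}$ and $\{\theta_1 = \pi,\ \theta_2 \in (-\pi,0)\}$ consist of lopsided points and are components of $\mathcal{LA}'(f)^c$, separated from the component containing the origin by the slabs $\pi/2 < |\theta_1| < \pi$, which lie in $\mathcal{LA}'(f)$; so there are at least three components, no injection into the target exists, and the orders of the two segments are the vertices $\mp 2\pi$. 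Hence $b_k \neq 0$ for all $k$ must be imposed as a hypothesis, not obtained by reduction --- a point which, to be fair, the paper's own two-line proof also glosses over, since its unqualified vertex criterion for $\mathcal{Z}_B$ is only correct when $B$ has no zero row, exactly the caveat you noticed before waving it away.
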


\begin{proof}
The proof is by following the same steps as in the proofs of Theorem \ref{thm:MapToZonotope}, and Propositions 
\ref{thm:Surjectivity} and \ref{thm:Injectivity}, with the only difference that we allow for $|\mu_i|\leq 1$. 
We only note that $p$ is a vertex of $\mathcal{Z}_B$ if and only if any $\mu\in [-1,1]^N$ such that $p = \pi \mu B/2$ has 
$|\mu_k|=1$ for each $k$. This implies that $f\langle\theta\rangle$ is contained in one line (but not in an open half-space), and 
hence that $\theta\in \mathcal{LA}'(f)$.
\end{proof}

Hence we also have a description of the set $\ccc({\mathcal{LA}'(f)})$, where we note especially that the bound $n! \Vol(\Delta_f)$ 
does not hold for $|\ccc(\mathcal{LA}'(f))|$, as shown in the following example.
\begin{example}
Considering the point configuration $A$, with Gale dual $B$, given by
\[
A = \left( \begin{array}{ccccc} 1 & 1 & 1 & 1 & 1 \\ 0 & 1 & 0 & 2 & 3 \\ 0 & 0 & 1 & 1 & 0 \end{array}\right)
\quad \text{and} \quad
B = \left( \begin{array}{cc} 2 & 2 \\ -2 & -3 \\ -1 & 0\\ 1 & 0\\ 0 & 1\end{array}\right).
\]
It is straightforward to check that the coefficients $c = (1,1,1,1, -1)$ yield that the set 
$(\mathcal{Z}_{B}\setminus \ver(\mathcal{Z}_B))\cap (\Arg(c)B+ 2\pi\mathbb{Z}^2)$ contains $6$ elements, while $2!\Vol(\Delta_f) = 5$.
\end{example}
 However, we should remark that the corresponding result to Proposition \ref{pro:ComplementInjection} also fails, leaving 
 the question of whether the normalized volume of the Newton polytope is the correct bound also for $|\ccc(\mathcal{A}'(f))|$ as an 
 open problem.

\section{Coamoebas of polynomials of small codimension}
When $A$ is simple the coamoeba $\mathcal{A}'(f)$ is well known, and as noted earlier $\mathcal{A}'(f) = \mathcal{LA}'(f)$.
Let us now consider coamoebas of polynomials of codimension one and two.

\subsection{Circuits}
Consider the case of codimension one, imposing also the assumption that all maximal minors of $A$ are non-vanishing.
In particular $A$ is a \emph{circuit}, an important special case treated exhaustively in \cite[Chap. 7.1B]{GKZ1}.
As before, we can write $A$ in the form
\[
 A = \left(\begin{array}{ccc}
            1&1&1\\
            0& A_{\mathbf{1}} & \alpha_{n+1}\\
           \end{array}\right),
\]
where $\det(A_{\mathbf{1}}) \neq 0$.

\begin{lemma}
\label{lem:CurcuitLemma}
If $A$ is a circuit, then a dual matrix of $A$ is given by the column vector 
\[
B = (\det(A_{\hat 0}),\,-\det(A_{\hat 1}),\,\dots,\,(-1)^n\det(A_{\hat{n}}),\,(-1)^{n+1}\det(A_{\hat{n+1}}))^t,
\]
where $A_{\hat j}$ denotes the $(n+1)\times(n+1)$-matrix obtained by removing the $j$th column from $A$. 
\end{lemma}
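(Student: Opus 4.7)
The plan is to verify the two defining properties of a dual matrix: that $AB=0$, and that $B$ has full rank. Since $B$ is a single column, the rank condition reduces to $B \neq 0$, which is immediate from the circuit hypothesis: every maximal minor $\det(A_{\hat j})$ is nonzero, so in particular every entry of $B$ is nonzero.

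For the relation $AB=0$, I would fix a row index $k \in \{0,1,\dots,n\}$ of $A$ and show that the $k$-th entry of $AB$ vanishes. The $k$-th entry of $AB$ is
\[
(AB)_k = \sum_{j=0}^{n+1} (-1)^j A_{k,j}\, \det(A_{\hat j}).
\]
The key observation is that this is precisely the Laplace expansion along the first row of the $(n+2)\times(n+2)$ matrix obtained by stacking the $k$-th row of $A$ on top of $A$ itself:
\[
M_k = \begin{pmatrix} A_{k,0} & A_{k,1} & \cdots & A_{k,n+1} \\ & A & \end{pmatrix}.
\]
Since row $k$ of $A$ appears twice in $M_k$, we have $\det(M_k)=0$, which gives $(AB)_k = 0$ for every $k$. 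Hence $AB=0$.

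There is really no serious obstacle here; the only thing to get right is the sign convention in the Laplace expansion (matching the alternating signs in the definition of $B$ to the cofactor signs along the added top row of $M_k$). Once that is done, the lemma follows immediately, and no use of the circuit hypothesis is needed for $AB=0$ itself — the circuit assumption is only needed to conclude $B\neq 0$ and hence that $B$ has full rank as an $N\times 1$ integer matrix.
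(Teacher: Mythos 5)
Your proof is correct, and it takes a different (and in fact more self-contained) route than the paper. You prove $AB=0$ by the classical repeated-row trick: for each row $k$ of $A$ you form the $(n+2)\times(n+2)$ matrix $M_k$ with that row duplicated on top, note $\det(M_k)=0$, and read off $\sum_{j}(-1)^jA_{k,j}\det(A_{\hat j})=0$ from the Laplace expansion along the added row; the signs $(-1)^{j}$ do match the cofactor signs $(-1)^{1+(j+1)}$, so the bookkeeping works out. The paper instead factors $A=TM$ with $T=\left(\begin{smallmatrix}1&0\\0&A_{\mathbf{1}}\end{smallmatrix}\right)$ and $M=\left(\begin{smallmatrix}1&1&1\\0&I_n&\beta\end{smallmatrix}\right)$, verifies $M\det(\hat M)=0$ by direct computation in this normal form, and then transfers the identity back via $\det(\hat A)=\det(T)\det(\hat M)$; this exploits the standing assumption $\det(A_{\mathbf{1}})\neq 0$ and keeps the check concrete, but it needs the (easy) transformation rule for signed maximal minors under left multiplication by $T$. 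Your argument buys generality and transparency: it shows $AB=0$ for any integer $(n+1)\times(n+2)$ matrix with no circuit or nonsingularity hypothesis, and you correctly isolate where the circuit assumption is actually used, namely to guarantee $B\neq 0$ (full rank of the $N\times 1$ matrix) — a point the paper's proof leaves implicit, since under the circuit assumption all entries of $\det(\hat A)$ are nonzero. Integrality of $B$ is immediate in both treatments.
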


\begin{proof}
Let us use the notation
\[
\det(\hat A) = (\det(A_{\hat 0}),\,-\det(A_{\hat 1}), \,\dots,\,(-1)^n\det(A_{\hat n}), (-1)^{n+1}\det(A_{\hat {n+1}}))^t.
\]
We can write $A = TM$, where
\[
T = \left( \begin{array}{cc} 1 & 0 \\ 0 & A_{\mathbf{1}} \end{array}\right),\quad \text{and}\quad M = \left( \begin{array}{ccc} 1 & 1 & 1 \\ 0 & I_n & \beta \end{array}\right),
\]
with $\beta = A_{\mathbf{1}}^{-1}\alpha_{n+1} \in \mathbb{Q}^{n}$. It is straightforward to check that $M \det(\hat M) = 0$, which 
implies that
\[
A \det(\hat A) = T M \det(\hat M) \det(T) = 0.
\]
As $\det(\hat A)$ is an integer vector, it follows that it is a dual matrix of $A$.
\end{proof}

\begin{theorem}
\label{thm:Circuits}
Let $A$ be a circuit. Then $\overline{\mathcal{LA}'}(f)$, and hence also $\overline{\mathcal{A}'}(f)$, has $n! \Vol(\Delta_f)$ many complement 
components for generic coefficients.
\end{theorem}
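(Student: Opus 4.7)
The plan is to apply the order map of Theorem \ref{thm:MapToZonotope} in the degenerate setting where the dual matrix $B$ is a single column, and to count the resulting lattice points.

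First I would reduce to the case $g_A=1$ by an integer affine transformation. By Corollary \ref{cor:CoAmoebaTransformation}, which applies equally well to the lopsided coamoeba since the list $f\langle\theta\rangle$ transforms in the same manner as the zero set, and the fact that $\Delta_{T(f)} = T\Delta_f$, both sides of the asserted equality scale by $|\det T|$ and the statement is preserved. Under this assumption, the matrix $B$ supplied by Lemma \ref{lem:CurcuitLemma} is a Gale dual; as $B$ is a single column $(b_0, \dots, b_{n+1})^t$, the zonotope reduces to the interval $\mathcal{Z}_B = [-L,L]$ with $L = \tfrac{\pi}{2}\sum_j |b_j|$, and $\mathbb{Z}[B]=\mathbb{Z}$.

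The computational heart of the argument is the identity
\[
\tfrac{1}{2}\textstyle\sum_j |b_j| \;=\; n!\Vol(\Delta_f),
\]
which I would establish using the Radon partition $A = A^+ \sqcup A^-$ according to the signs of the $b_j$. Since any $n+1$ of the $n+2$ points of $A$ are affinely independent (this is the defining feature of a circuit), each $\Delta_j := \Conv(A \setminus \{\alpha_j\})$ is a simplex of normalized volume $|b_j|$, and $\Delta_f$ decomposes as both $\bigcup_{j\in A^+}\Delta_j$ and $\bigcup_{j\in A^-}\Delta_j$, the constituent simplices meeting along the common face $\Conv(A^+)\cap\Conv(A^-)$. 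Summing normalized volumes and using the relation $\sum_j b_j = 0$ gives the formula.

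With this in hand, Propositions \ref{thm:Surjectivity} and \ref{thm:Injectivity} identify $\ccc(\overline{\mathcal{LA}'}(f))$ bijectively with the intersection of a translated lattice of spacing $2\pi$ and the open interval $(-L, L)$ of length $2\pi\cdot n!\Vol(\Delta_f)$. Provided $\Arg_\pi(c)B \notin \pi\mathbb{Z}$, which is a generic (open dense) condition on the coefficients, this intersection contains exactly $n!\Vol(\Delta_f)$ points. The statement for $\overline{\mathcal{A}'}(f)$ then follows from the chain
\[
n!\Vol(\Delta_f) = |\ccc(\overline{\mathcal{LA}'}(f))| \le |\ccc(\overline{\mathcal{A}'}(f))| \le n!\Vol(\Delta_f),
\]
in which the first inequality comes from Proposition \ref{pro:ComplementInjection} and the second is Nisse's upper bound \cite{N1}. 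The main obstacle is the volume identity in the third paragraph; the rest is essentially bookkeeping with the machinery of the preceding sections.
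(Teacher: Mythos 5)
Your proposal is correct and follows essentially the same route as the paper: the dual matrix of Lemma \ref{lem:CurcuitLemma}, the zonotope as an interval of length $2\pi\, n!\Vol(\Delta_f)$, a generic lattice-point count, bijectivity of $\cord$ via Propositions \ref{thm:Surjectivity} and \ref{thm:Injectivity}, and Proposition \ref{pro:ComplementInjection} together with the bound of \cite{N1} to pass to $\overline{\mathcal{A}'}(f)$. The only deviation is that where the paper simply cites \cite[Chap.~7, Prop.~1.2]{GKZ1} for $\tfrac12\sum_j|b_j| = n!\Vol(\Delta_f)$, you reprove it via the two triangulations of the circuit, which works (just note that within one triangulation the simplices $\Delta_j$ and $\Delta_k$ meet along the common facet $\Conv(A\setminus\{\alpha_j,\alpha_k\})$, not along $\Conv(A^+)\cap\Conv(A^-)$, which is the single Radon point).
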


\begin{proof}
Let $\Vol(A_{\hat j})$ denote the normalized volume of the simplex $A_{\hat j}$, that is $n!$ times its Euclidean volume.
Then $|\det(A_{\hat j})| = \Vol(A_{\hat j})$.
Using the dual matrix given in Lemma \ref{lem:CurcuitLemma}, we find that the zonotope $\mathcal{Z}_B$ is an interval of length
$\pi(\Vol(A_{\hat 0})+\dots +  \Vol(A_{\hat {n+1}}))$, and it follows from \cite[Chap. 7, Prop. 1.2, p.217]{GKZ1} that
\[
\pi(\Vol(A_{\hat 0})+\dots +  \Vol(A_{\hat {n+1}})) = 2\pi n!\Vol(\Delta_f).
\]
The components of $B$ are the maximal minors of $A$, and hence $g_A = g_B$, both which we can assume equals 1. 
We see that for generic coefficients
\[
\big|\inter(\mathcal{Z}_B)\cap (\Arg(c)B + 2\pi \mathbb{Z})\big| = n!\Vol(\Delta),
\]
and conclude the theorem from Propositions \ref{thm:Surjectivity} and \ref{thm:Injectivity}.
\end{proof}

It was conjectured by Passare \cite[Conjecture 8.1]{CK} that if $A$ is maximally sparse, then the maximal number of 
connected components of the complement of the closed coamoeba is obtained for generic coefficients. In general this 
conjecture is false, with counterexamples given already in the text \cite{CK}. However, we can conclude that the conjecture 
is true in the following special case.

\begin{corollary}
\label{cor:Passare}
If the Newton polytope $\Delta_f$ has $n+2$ vertices, then the upper bound $n!\Vol(\Delta_f)$ on the number of connected 
components of the complement of the coamoeba $\overline{\mathcal{A}'}(f)$ is obtained for maximally sparse polynomials with generic 
coefficients.
\end{corollary}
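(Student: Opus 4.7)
Since $f$ is maximally sparse and $|\ver(\Delta_f)|=n+2$, we have $|A|=n+2$, hence codimension $m=1$. Using the shrinking from \cite{GKZ1} recalled just after Corollary~\ref{cor:CoAmoebaTransformation}, and noting that an integer affine transformation multiplies both $n!\Vol(\Delta_f)$ and $|\ccc(\overline{\mathcal{A}'}(f))|$ by the same factor $|\det T|$, we may assume $g_A=1$. A Gale dual of $A$ is then the single primitive integer column $B\in\mathbb{Z}^{n+2}$ with entries $b_j=(-1)^j\det(A_{\hat j})$ of Lemma~\ref{lem:CurcuitLemma}, and the zonotope $\mathcal{Z}_B$ is the centered interval of length $\pi\sum_j|b_j|$.

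The heart of the proof is the identity $\sum_{j=0}^{n+1}|b_j|=2\,n!\Vol(\Delta_f)$. When $A$ is a circuit this is \cite[Ch.~7, Prop.~1.2]{GKZ1} and underpins Theorem~\ref{thm:Circuits}. However, maximal sparsity with $n+2$ vertices does not force the circuit condition once $n\geq 3$; for example, the pyramid over a square has four coplanar vertices and so is not a circuit. To handle the general case I would split $\{0,\dots,n+1\}=J_+\cup J_0\cup J_-$ by the sign of $b_j$ and exhibit two Gale triangulations of $\Delta_f$: the families $\{\Conv(A\setminus\alpha_j)\mid j\in J_+\}$ and $\{\Conv(A\setminus\alpha_j)\mid j\in J_-\}$ should each be a triangulation of $\Delta_f$ by $n$-simplices, with the vertices $\alpha_j$ for $j\in J_0$ appearing in every simplex of both. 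Summing normalized volumes then gives $\sum_{j\in J_+}|b_j|=\sum_{j\in J_-}|b_j|=n!\Vol(\Delta_f)$, and adding yields the claim since $b_j=0$ for $j\in J_0$. The main obstacle is verifying that these families really are triangulations when $J_0\neq\emptyset$; this should follow by induction on $|J_0|$, since picking $\alpha_j$ with $j\in J_0$ makes $\Delta_f$ a pyramid over the lower-dimensional codimension-one base $\Conv(A\setminus\alpha_j)$, and one then cones each Gale triangulation of the base over the apex $\alpha_j$.

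With the identity in hand, the interval $\mathcal{Z}_B$ has length $2\pi\,n!\Vol(\Delta_f)$. Since $g_A=1$, Theorem~\ref{thm:MapToZonotope} together with Propositions~\ref{thm:Surjectivity} and~\ref{thm:Injectivity} makes $\cord$ a bijection between $\ccc(\overline{\mathcal{LA}'}(f))$ and $\inter(\mathcal{Z}_B)\cap(\Arg_\pi(c)B+2\pi\mathbb{Z})$. For generic coefficients $c$, neither endpoint of $\inter(\mathcal{Z}_B)$ meets the translated lattice, so this open interval of length $2\pi\,n!\Vol(\Delta_f)$ contains exactly $n!\Vol(\Delta_f)$ translates of $2\pi\mathbb{Z}$. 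Combining with Proposition~\ref{pro:ComplementInjection} and the upper bound of \cite{N1} forces
\[
n!\Vol(\Delta_f)=|\ccc(\overline{\mathcal{LA}'}(f))|\leq|\ccc(\overline{\mathcal{A}'}(f))|\leq n!\Vol(\Delta_f),
\]
so equality holds throughout.
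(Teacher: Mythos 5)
Your argument is correct, and it departs from the paper's own proof at precisely the step you single out. The paper proves Corollary~\ref{cor:Passare} by a one-line reduction to Theorem~\ref{thm:Circuits}: it claims that maximal sparsity forces $A$ to be a circuit, since ``any choice of $n+1$ points will span a simplex of full dimension.'' Your pyramid-over-a-square example shows that this claim fails for $n\geq 3$: for $A=\{(0,0,0),(1,0,0),(0,1,0),(1,1,0),(0,0,1)\}$ all five points are vertices of $\Delta_f$, yet the maximal minor obtained by deleting the apex vanishes, so the paper's argument as written only covers configurations that happen to be circuits (automatic for $n\leq 2$). Your replacement for this step is sound: if $b_j=0$ then deleting $\alpha_j$ drops the affine dimension, so $\alpha_j$ is the apex of a pyramid over $\Conv(A\setminus\alpha_j)$; iterating over $J_0$ exhibits $\Delta_f$ as an iterated pyramid over the hull of the genuine circuit $\{\alpha_j \mid b_j\neq 0\}$, the two circuit triangulations of that hull (which is \cite[Ch.~7, Prop.~1.2]{GKZ1}, the same input as in Theorem~\ref{thm:Circuits}) cone to two triangulations of $\Delta_f$ whose maximal simplices are exactly the $\Conv(A\setminus\alpha_j)$ with $j\in J_+$, respectively $j\in J_-$, and since $|b_j|=n!\Vol(\Conv(A\setminus\alpha_j))$ the identity $\sum_j|b_j|=2\,n!\Vol(\Delta_f)$ follows by additivity of volume. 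The remaining steps of your proof --- the reduction to $g_A=1$ by shrinking, bijectivity of $\cord$ via Theorem~\ref{thm:MapToZonotope} and Propositions~\ref{thm:Surjectivity} and~\ref{thm:Injectivity}, the generic count of points of the translated lattice in the open interval $\inter(\mathcal{Z}_B)$, and the squeeze through Proposition~\ref{pro:ComplementInjection} and the bound of \cite{N1} --- coincide with the proof of Theorem~\ref{thm:Circuits}. So what your route buys is genuine: it establishes the corollary also in the degenerate maximally sparse cases ($n\geq 3$, some vanishing minors) that the paper's own reduction does not reach, at the modest cost of the coning induction you sketch, which does indeed close without difficulty.
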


\begin{proof}
Using the previous theorem, it is enough to show that if $f$ is maximally sparse, then $A$ is a circuit. Indeed, as all 
points in $A$ are vertices of $\Delta_f$, we find that any choice of $n+1$ points will span a simplex of full dimension, 
whence the corresponding determinant is non-vanishing.
\end{proof}

When $n\geq 2$, and for generic coefficients, the topological equivalence between $\overline{\mathcal{A}'}(f)$ and $\overline{\mathcal{LA}'}(f)$ 
implied by Theorem \ref{thm:Circuits} also yields a method to construct a set of \emph{base points} for the set of connected 
components of the complement of the coamoeba, by which we mean a set with exactly one element in each such component. 
Given a polynomial 
\[
 f(z) = c_0 + c_1 z^{\alpha_1} + \dots + c_n z^{\alpha_n} + c_{n+1} z^{\alpha_{n+1}},
\]
under the above assumptions, consider the $n$ polynomials given by
\[
 f_i(z) = f(z) - nc_iz^{\alpha_i} - 2 c_{n+1}z^{\alpha_{n+1}}, \quad i= 1, \dots, n,
\]
and the system
\[
f_1(z) = \dots = f_n(z) = 0.
\]
Note that since $n\geq 2$ we have that $\Delta_{f_i} = \Delta_f$ for each $i$. Avoiding the discriminant locus of this 
system, the BKK theorem \cite[Chap. 6, Thm. 2.2, p.201]{GKZ1} tells us that such a system has exactly $n!\Vol(\Delta_f)$ distinct solutions 
in $(\mathbb{C}^*)^n$. Let $S$ be the set of arguments of these solutions. The above system is equivalent to
\begin{equation}
\label{eq:CodimOneBinomials}
 \left\{\begin{array}{lcll}
         c_1z^{\alpha_1} - c_iz^{\alpha_i} & = & 0 & \quad i=2, \dots,  n\\
         c_0 - c_{n+1} z^{\alpha_{n+1}} & = & 0, &
        \end{array}\right.
\end{equation}
which shows that for each $\theta\in S$  
the set $f\langle\theta\rangle$ contains at most two points. Thus, under the genericity assumption $f\langle\theta\rangle$ is lopsided for 
each $\theta\in S$. 
It also follows that $|S| = n!\Vol(\Delta_f)$, and that the numbers 
\[
\phi_\theta = \arg_\pi\bigg(\frac{c_1 e^{i\langle\alpha_1,\theta\rangle}}{c_0}\bigg)= \dots = \arg_\pi\bigg(\frac{c_n e^{i\langle\alpha_n,\theta\rangle}}{c_0}\bigg), \quad \theta \in S
\]
are distinct. Hence, the orders
\[
\cord(\Theta) = \phi_\theta\,(0,1,\dots, 1,0) B
\]
are also distinct. We conclude that $S$ has exactly one element in each connected component of $\overline{\mathcal{A}'}(f)^c$.

\subsection{The case $m=2$ and a relation to discriminants}
\label{ssec:Case2}

Let us move up one step in the complexity chain and consider the case when $m=2$. We will assume that $g_A=1$. Related to 
the point configuration $A$ is the so-called $A$-discriminant $D_A(c)$, which is a polynomial in the coefficients $c$ 
vanishing if and only if the hypersurface $V(f)\subset (\mathbb{C}^*)^n$ is singular, see \cite{GKZ1}. 
The polynomial $D_A(c)$ enjoys one homogeneity relation for each row of the matrix $A$, and choosing a Gale dual of $A$ 
yields a dehomogenization of $D_A(c)$ in the following manner; introducing the variables 
\[
x_i = c_1^{b_{1i}}\cdots c_N^{b_{Ni}}, \quad i=1, \dots, m,
\] 
then there is a Laurent monomial $g(c)$, such that $g(c)D_A(c) = D_B(x)$.
This relation is described in more detail in \cite{NP}, where it was first shown that the zonotope $\mathcal{Z}_B$ together with 
the coamoeba 
$\mathcal{A}'(D_B)$ of the dehomogenized discriminant generically covers $\mathbf{T}^2$
precisely $n!\Vol(\Delta_f)$ many times. Hence, if $\overline{\mathcal{A}'}(D_B)\neq \mathbf{T}^2$, then there is a choice of coefficients 
$c$ such that the set $(\Arg(c) + 2\pi \mathbb{Z}^2 )\cap \inter(\mathcal{Z})$ has $n!\Vol(\Delta_f)$ many elements. If so, then we can 
find a coamoeba whose complement has the maximal 
number of connected components. As the next example shows this is not always the case.

\begin{example}
\label{ex:CodimensionTwo}
Consider the point configuration
\[
 A = \left(\begin{array}{ccccc}
            1 & 1 & 1 & 1 & 1 \\
            0 & 2 & 0 & 1 & 2 \\
            0 & 0 & 3 & 3 & 2
           \end{array}\right).
\]
where we note that $2!\Vol(\Delta_f) = 11$. The dehomogenized discriminant related to the Gale dual 
\[
 B = \left(\begin{array}{cc}
            1 &  2 \\
           -1 & -3 \\
           -2 & -2 \\
	    2 &  0 \\
	    0 &  3 \\
           \end{array}\right)
\]
is
\begin{align*}
D_B(x) = & \,\,729 x_1^2 + 2187 x_1^3 + 2187 x_1^4 + 729 x_1^5 + 1728 x_2 + 4752 x_1 x_2\\
                 & +  5400 x_1^2 x_2  - 1404 x_1^3 x_2 - 864 x_1^4 x_2 + 3456 x_2^2 -  5616 x_1 x_2^2\\
                 & + 576 x_1^2 x_2^2 + 256 x_1^3 x_2^2 + 1728 x_2^3.
\end{align*}
Its coamoeba covers the torus $\mathbf{T}^2$ completely, and hence the complement of the closed lopsided coamoeba can not have more 
than 10 connected components.
\begin{figure}[h]
\centering
\includegraphics[width=40mm]{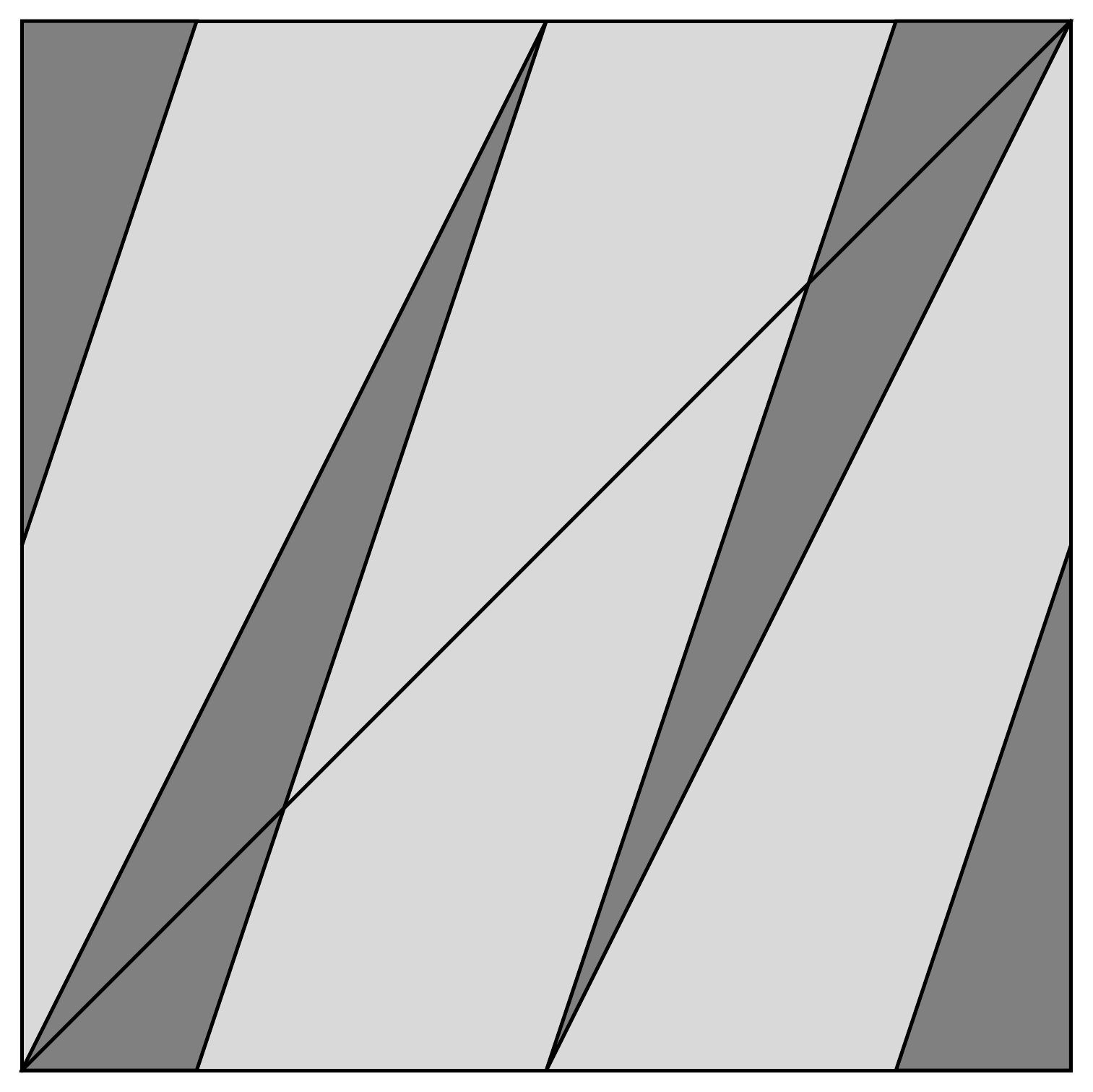}
\caption{The coamoeba of $D_B(x)$ drawn with multiplicity, darker areas are covered twice.}
\label{fig:DiscriminantCovers}
\end{figure}
\end{example}

The connection between the zonotope $\mathcal{Z}_B$ and the dehomogenized discriminant $D_B(x)$ is believed to be true also in 
higher codimensions, however this is still an open problem. For the latest development, we refer the reader to \cite{PS}.

The fact that we cannot always construct a coamoeba whose complement has $n!\Vol(\Delta_f)$ many connected components is of 
course a source of just criticism. However, let us note that it has not been proved that this upper bound is sharp. To the 
contrary, recent examples suggest that this is not the case \cite{For}.

\end{document}